\newcolumntype{C}{>{\centering\arraybackslash}X}
\newcolumntype{R}{>{\raggedleft\arraybackslash}X}
\newcolumntype{L}{>{\raggedright\arraybackslash}X}
\newcommand{\multiline}[1]{%
    \begin{tabularx}{\dimexpr\linewidth-\ALG@thistlm}[t]{@{}X@{}}
        #1
    \end{tabularx}
}
\algnewcommand{\IIf}[1]{\State\algorithmicif\ #1\ \algorithmicthen}
\algnewcommand{\EndIIf}{\unskip\ \algorithmicend\ \algorithmicif}
\algnewcommand{\IElse}{\unskip\ \algorithmicelse \unskip\ }
\DeclareMathOperator{\tr}{\mathrm{tr}}
\newcommand{\one}{\bm{\mathrm{e}}}
\newcommand{\eye}{\bm{\mathrm{I}}}
\newtheorem{theorem}{Theorem}
\newtheorem{remark}{Remark}
\newtheorem{lemma}{Lemma}
\begin{document}

\title{A Lagrangian Dual Method for Two-Stage Robust Optimization with Binary Uncertainties\thanks{This material is based upon work supported by the U.S. Department of Energy, Office of Science, Advanced Scientific Computing Research, under Contract DE-AC02-06CH11357.}}

\author{Anirudh Subramanyam}
\affil{Mathematics and Computer Science Division, Argonne National Laboratory, Lemont, IL}

\maketitle

\begin{abstract}
    This paper presents a new exact method to calculate worst-case parameter realizations in two-stage robust optimization problems with categorical or binary-valued uncertain data. Traditional exact algorithms for these problems, notably Benders decomposition and column-and-constraint generation, compute worst-case parameter realizations by solving mixed-integer bilinear optimization subproblems. However, their numerical solution can be computationally expensive not only due to their resulting large size after reformulating the bilinear terms, but also because decision-independent bounds on their variables are typically unknown. We propose an alternative Lagrangian dual method that circumvents these difficulties and is readily integrated in either algorithm. We specialize the method to problems where the binary parameters switch on or off constraints as these are commonly encountered in applications, and discuss extensions to problems that lack relatively complete recourse and to those with integer recourse. Numerical experiments provide evidence of significant computational improvements over existing methods.
    
    \noindent \textbf{Keywords:} robust optimization, two-stage problems, Lagrangian dual, binary uncertainty
\end{abstract}

\section{Motivation}\label{sec:motivation}

Over the last two decades, robust optimization has emerged as one of the most popular approaches to address optimization problems with uncertain parameters. The key idea is to replace a probabilistic description of the unknown problem data with a deterministic set-based model, and identify solutions that are optimal under the worst-case data realizations from the latter so-called uncertainty set.
The approach can not only circumvent some of the computational and modeling difficulties associated with traditional stochastic approaches, but can also offer probabilistic performance guarantees of its solutions via a careful choice of the problem formulation and uncertainty set.
It has been successfully applied to problems in a wide variety of domains including
energy \cite{conejo2021robust},
transportation \cite{subramanyam2018robust},
telecommunication \cite{koster2013robust},
engineering design \cite{beyer2007robust},
and finance \cite{fabozzi2007robust}, to name but a few.

The rich gamut of applications have been enabled by several recent developments in robust optimization theory and algorithms.
The rich methodology of static or one-stage robust optimization problems, where all decisions have to be taken before any uncertainties are resolved, is reviewed in \cite{ben2009robust,gabrel2014recent}.
However, dynamic or multi-stage robust optimization problems, where some subset of the decisions can adapt to observations of unknown problem data, especially those involving categorical or discrete quantities, continue to pose theoretical and algorithmic challenges.

The goal of this paper is to address some of these challenges. %
Our motivation stems from the broad modeling power of categorical or binary-valued random variables,
particularly in applications that are laced with uncertainty about inherently discrete or non-numerical quantities.
Examples include:
\begin{enumerate}
    \item \emph{Optimization over networks or graph structures.} Binary parameters are natural candidates to model random node or link failures in networks. Depending on the application, the latter can represent connections that are either physical, as in the case of electric power \cite{yuan2016robust}, transportation \cite{xu2020robust} and supply chain \cite{lu2015reliable} systems, or virtual, as in the case of telecommunication traffic engineering \cite{liu2014traffic} and industrial control systems \cite{wang2007robust}. %
    Related examples include adversarial network interdiction \cite{smith2013modern} %
    and the design of survivable networks \cite{kerivin2005design}.
    
    \item \emph{Unit or service availability in manufacturing and service operations.} The unavailability (or failure) of machines, equipment, and units in a production scheduling environment \cite{goren2008robustness,feng2019finite}, or of particular services in the context of service operations \cite{dong2020managing} are naturally modeled as binary random variables.
    
    \item \emph{Discrete demand and preference models.} In many applications, customer demands are naturally discrete quantities. For example, logistics operators may not know \emph{a priori} which customers to visit \cite{albareda2011facility,subramanyam2021robust}. %
    Another example is the phenomenon of no-shows in queuing or appointment systems with scheduled arrivals, such as in healthcare clinics \cite{jiang2017integer}.
    Finally, in discrete choice modeling and ranking-based portfolio management, the uncertain parameters naturally belong to discrete sets; for example, in the latter, unknown asset preferences can be modeled as uncertain rankings or permutations over all possible orderings \cite{nguyen2012robust}.
    
    \item \emph{Cardinality-constrained uncertainty models.} One of the most popular uncertainty sets is the cardinality-constrained or Bertsimas-Sim model \cite{bertsimas2004price}, which stipulates a budget on the number of uncertain parameters that can simultaneously deviate from their nominal values. This set has a natural interpretation as a discrete uncertainty set. In fact, any continuous-valued unknowns that are modeled using this set can be equivalently reformulated using a binary set.
    These models are similar to reliability and safety engineering concepts that are referred to as ``$N-k$ reliability'' \cite{bienstock2010the} or ``$N+k$ redundancy'' \cite{barroso2013datacenter}, where the goal is to design a system with $N$ components such that it remains operational under any combination of $k$ failures.
    
    \item \emph{Scenario-based models of continuous unknowns.} Scenario-based uncertainty sets have been widely used in robust combinatorial optimization \cite{kouvelis2013robust,garuba2020comparison} %
    as well as other applications such as power systems engineering \cite{jabr2014robust}.
    As before, such sets can be equivalently reformulated and interpreted as binary sets.
    
    \item \emph{Adversarial machine learning with categorical data.} Relevant applications of robust optimization with categorical uncertainty include logistic regression \cite{shafieezadeh2015distributionally} and binary classification problems with noisy labels \cite{caramanis201214}.
\end{enumerate}
Motivated by these applications, in Section~\ref{sec:formulation}, we first describe the problem formulations of interest, then provide an overview of existing algorithms for their solution, and then summarize the contributions of this paper.

\section{Problem Formulation and Contributions}\label{sec:formulation}
We study two different problem formulations.
The first one, which we denote by \ref{eq:two_stage_ro_general}, is fairly general where both the constraints and objective function are affected by uncertainty.
\begin{equation}\label{eq:two_stage_ro_general}\tag{$\mathcal{P}$}
    \begin{aligned}
        &\inf_{\bm{x} \in \mathcal{X}} \sup_{\bm{\xi} \in \Xi} \, \mathcal{Q}(\bm{x}, \bm{\xi}), \\
        & %
        \mathcal{Q}(\bm{x}, \bm{\xi}) = 
        \left[
        \begin{aligned}
            \mathop{\text{minimize}}_{\bm{y} \in \mathcal{Y}} \;\; & \bm{c}(\bm{\xi})^\top \bm{x}  + \bm{d}(\bm{\xi})^\top \bm{y}  \\
            \text{subject to} \; & \bm{T}\bm{x} + \bm{W} \bm{y} \geq \bm{h}(\bm{\xi})
        \end{aligned}
        \right].
    \end{aligned}
\end{equation}
Here, $\bm{x}$, $\bm{y}$ and $\bm{\xi}$ are the first- and second-stage decision variables, and the vector of uncertain parameters, respectively.
We will assume throughout the paper that the feasible decision sets $\mathcal{X} \subseteq \mathbb{R}^{n_1}$ and $\mathcal{Y} \subseteq \mathbb{R}^{n_2}$, and the uncertainty set $\Xi \subseteq \{0, 1\}^{n_p}$ are non-empty and mixed-integer linear programming (MILP) representable;
$\mathcal{X}$ is compact;
$\bm{c}: \{0, 1\}^{n_p} \to \mathbb{R}^{n_1}$, $\bm{d} : \{0, 1\}^{n_p} \to \mathbb{R}^{n_2}$ and $\bm{h} : \{0, 1\}^{n_p} \to \mathbb{R}^{m}$ are vector-valued affine functions; and $\bm{T} \in \mathbb{R}^{m \times n_1}$ and $\bm{W} \in \mathbb{R}^{m \times n_2}$ are fixed matrices.
Finally, we note that $\mathcal{Q} : \mathbb{R}^{n_1} \times \{0, 1\}^{n_p} \to \mathbb{R} \cup \{-\infty, +\infty\}$ is the second-stage value function, in which we also include the first-stage costs $\bm{c}(\bm{\xi})^\top \bm{x}$ for ease of exposition.
We adopt the convention that the optimal value of a minimization (maximization) problem is $-\infty$ ($+\infty$) if it is unbounded and $+\infty$ ($-\infty$) if it is infeasible.

The second formulation, which we denote by \ref{eq:two_stage_ro_indicator}, uses the uncertain parameters as binary indicators that switch on or off second-stage constraints.
\begin{equation}\label{eq:two_stage_ro_indicator}\tag{$\mathcal{P}_\mathcal{I}$}
    \begin{aligned}
        &\inf_{\bm{x} \in \mathcal{X}} \sup_{\bm{\xi} \in \Xi} \, \mathcal{Q}_\mathcal{I}(\bm{x}, \bm{\xi}), \\
        & %
        \mathcal{Q}_\mathcal{I}(\bm{x}, \bm{\xi}) = 
        \left[\begin{aligned}
            \mathop{\text{minimize}}_{\bm{y} \in \mathcal{Y}} \;\; & \bm{c}(\bm{\xi})^\top \bm{x}  + \bm{d}(\bm{\xi})^\top \bm{y}  \\
            \text{subject to} \; &  \bm{g}(\bm{x}, \bm{y}) \geq \bm{0} \\
            & \xi_j = 0 \implies g_i(\bm{x}, \bm{y}) = 0, \;\; i \in \mathcal{I}_j^0, \; j \in [n_p] \\
            & \xi_j = 1 \implies g_i(\bm{x}, \bm{y}) = 0, \;\; i \in \mathcal{I}_j^1, \; j \in [n_p]
        \end{aligned}\right].
    \end{aligned}
\end{equation}
Here, $\bm{g} : \mathcal{X} \times \mathcal{Y} \to \mathbb{R}^m$ is a vector-valued affine function, $\mathcal{I}_j^0, \mathcal{I}_j^1 \subseteq [m]$ are index sets. %
and $[n]$ denotes the set $\{1, 2, \ldots, n\}$ for any positive integer $n$.

The indicator formulation \ref{eq:two_stage_ro_indicator} is ubiquitous in applications where the uncertain presence or absence of a quantity can modify the geometry of the feasible set.
For example, in the context of network optimization, $g_i(\bm{x}, \bm{y})$ may represent flow of some commodity $i$ through the $j^\text{th}$ arc (or node) of the network, which must be set to $0$ whenever the arc (or node) becomes unavailable due to random failure, as indicated by $\xi_j = 1$.
As we highlighted previously, similar such structures arise, among others, in the context of unit (un-)availability in production scheduling, discrete demand locations in logistics, and customer no-shows in appointment scheduling.
Although the indicator formulation \ref{eq:two_stage_ro_indicator} can be viewed as a special case of the more general problem \ref{eq:two_stage_ro_general} after performing an appropriate reformulation (e.g., big-M), we preserve the indicator structure of \ref{eq:two_stage_ro_indicator}, since it is more natural and intuitive.
Perhaps more importantly, we will show that we can exploit the specific structure of \ref{eq:two_stage_ro_indicator} to obtain computational improvements.
Before we proceed, however, we make some important remarks about formulations \ref{eq:two_stage_ro_general} and \ref{eq:two_stage_ro_indicator}.

\begin{remark}
    Problem~\ref{eq:two_stage_ro_general} can also accommodate random technology and recourse matrices, $\bm{T} : \{0, 1\}^{n_p} \to \mathbb{R}^{m \times n_1}$ and $\bm{W} : \{0, 1\}^{n_p} \to \mathbb{R}^{m \times n_2}$, whenever they are matrix-valued affine functions and the resulting bilinear terms, $\bm{T}(\bm{\xi}) \bm{x}$ and $\bm{W}(\bm{\xi}) \bm{y}$, can be exactly linearized (e.g., using Glover inequalities~\cite{glover1975improved}). The reformulated problem can be readily expressed as an instance of~\ref{eq:two_stage_ro_general}.
\end{remark}

\begin{remark}
    The non-negativity constraint $\bm{g}(\bm{x}, \bm{y}) \geq \bm{0}$  in problem \ref{eq:two_stage_ro_indicator} does not restrict generality.
    Indeed, suppose that for some fixed $i \in  [m]$, the quantity $g_i(\bm{x}, \bm{y})$ is unconstrained, but must be set to $0$ whenever $\xi_j = 1$.
    In such cases, we can augment the second-stage decision vector $\bm{y}$ to $(\bm{y}, g_i^{+}, g_i^{-})$,
    replace the indicator constraint in \ref{eq:two_stage_ro_indicator} with
    $
    \xi_j = 1 \implies \left[g_i^{+} = g_i^{-} = 0 \right],
    $
    and impose the additional constraints:
    $
    g_i^{+}, g_i^{-} \geq 0
    $
    and
    $
    g_i(\bm{x}, \bm{y}) = g_i^{+} - g_i^{-}
    $.
\end{remark}

The problem \ref{eq:two_stage_ro_general} (or \ref{eq:two_stage_ro_indicator}) is said to satisfy the relatively complete recourse assumption whenever the second-stage problem is feasible for all first-stage decisions and uncertain parameter realizations; in other words, $\mathcal{Q}(\bm{x}, \bm{\xi}) < +\infty$ (or $\mathcal{Q}_\mathcal{I}(\bm{x}, \bm{\xi}) < +\infty$) for all $\bm{x} \in \mathcal{X}$ and $\bm{\xi} \in \Xi$.
Similarly, it is said to satisfy the sufficiently expensive recourse assumption whenever the second-stage value function is bounded from below for all first-stage decisions and parameters; in other words, $\mathcal{Q}(\bm{x}, \bm{\xi}) > -\infty$ (or $\mathcal{Q}_\mathcal{I}(\bm{x}, \bm{\xi}) > -\infty$) for all $\bm{x} \in \mathcal{X}$ and $\bm{\xi} \in \Xi$.
We do not need all of these assumptions but state them here for ease of exposition.
We list our assumptions in Section~\ref{sec:reformulation}.
Regarding notation,
we use $\mathbb{R}_{+}$ and $\mathbb{Z}_{+}$
to denote the non-negative reals and integers;
and $\one_j$, $\one$, $\eye$ and $\bm{0}$ to denote the $j^\text{th}$ canonical unit vector, the vector of ones, the identity matrix, and the zero vector/matrix, respectively; their dimensions should be clear from the context.

\subsection{Literature review}\label{sec:formulation:literature}
In the presence of binary uncertainties, the two-stage robust optimization problem \ref{eq:two_stage_ro_general} is NP-hard even if there are no first-stage decisions (i.e., $\mathcal{X}$ is a singleton) and the second-stage problem is a two-dimensional linear program with only uncertain objective coefficients (i.e., $\mathcal{Y} = \mathbb{R}^{2}_{+}$ and $\bm{h}(\bm{\xi})$ is deterministic) \cite{subramanyam2020data}.
Indeed, it is solvable in polynomial time only in few special cases, such as when the uncertainty set $\Xi$ has a small inner description (i.e., in terms of a polynomial number of extreme points) or when $\bm{h}(\bm{\xi}) = \bm{0}$ and the matrices describing the constraints of $\Xi$ and the slopes of the affine function $\bm{d}(\bm{\xi})$ are totally unimodular \cite{mittal2020robust,subramanyam2020data}. 

Problem \ref{eq:two_stage_ro_general} simplifies substantially in the absence of binary uncertainties, when the uncertainty set $\Xi$ is linear programming (LP) representable.
In this case, existing algorithms for its approximate solution include:
\begin{enumerate}[label=\itshape\roman*)]
    \item decision rule approaches \cite{ben2004adjustable,bertsimas2015design,zhen2018adjustable,georghiou2019decision}, which approximate the optimal second-stage decisions $\bm{y}$ using parametric classes of linear or nonlinear functions of the uncertain parameters $\bm{\xi}$;
    \item uncertainty set partitioning approaches \cite{postek2016multistage,bertsimas2016multistage}, which partition the uncertainty set $\Xi$ into simple subsets (e.g., hyperrectangles) and approximate $\bm{y}$ by affine or constant functions of $\bm{\xi}$ over each subset;
    \item finite- or $K$-adaptability approaches \cite{bertsimas2010finite,hanasusanto2015k,buchheim2017min,subramanyam2020k}, which calculate a small, fixed number of second-stage decisions $\bm{y}_1, \ldots, \bm{y}_K$ in the first-stage, and subsequently select the best of these in response to a given realization of $\bm{\xi}$ in the second-stage; and,
    \item convexification approaches \cite{hanasusanto2018conic,ardestani2020linearized,xu2018copositive}, which first reformulate the infinite-dimensional two-stage problem as a finite-dimensional (but still intractable) one-stage problem and then approximate the latter using lift-and-project techniques (e.g., reformulation-linearization technique or using copositive programming).
\end{enumerate}
Several of these methods, particularly ones that are based on $K$-adaptability \cite{goerigk2020min,subramanyam2020k} and convexification \cite{mittal2020robust,jiang2019data,subramanyam2020data}, have also been extended to problems with mixed-integer uncertainties.

In addition to the above approximate schemes, a number of exact methods have also been proposed for problem \ref{eq:two_stage_ro_general} under various simplifying conditions.
When uncertainty affects only the objective function (i.e., $\bm{h}(\bm{\xi})$ is deterministic), the first- and second-stage decisions are binary (i.e., $\mathcal{X} \subseteq \{0, 1\}^{n_1}$ and $\mathcal{Y} \subseteq \{0, 1\}^{n_2}$) and $\Xi$ is a polytope, \cite{kammerling2020oracle} have proposed efficient algorithms to calculate lower bounds on the optimal value of \ref{eq:two_stage_ro_general} that is then integrated in a branch-and-bound scheme.
A similar approach but allowing also some continuous decisions in the first- and second-stage has been proposed in \cite{arslan2021decomposition}.

When uncertainty affects only the second-stage constraints' right-hand sides (i.e., $\bm{d}(\bm{\xi})$ is deterministic), the second-stage decisions are continuous (i.e., $\mathcal{Y}$ is LP representable) and $\Xi$ is the projection of a binary set (e.g., a cardinality-constrained set), a number of schemes based on Benders decomposition have been developed; e.g., see \cite{thiele2009robust,ayoub2016decomposition}.
A variant of this scheme that can solve problems with an exponential but finite number of uncertain parameter realizations that are described implicitly via block-structured binary uncertainty sets, has been recently developed in \cite{hashemi2021exploiting}.
An alternative exact approach based on semi-infinite programming techniques and known as column-and-constraint-generation was proposed in \cite{zhao2012exact,zeng2013solving}. %
Notably, this algorithm can also address problems where both the second-stage feasible region $\mathcal{Y}$ and the uncertainty set $\Xi$ are MILP representable.

Before we proceed to discuss the Benders decomposition and column-and-constraint generation algorithms in some detail,
we note that alternative approximation schemes for \ref{eq:two_stage_ro_general} and \ref{eq:two_stage_ro_indicator} can be devised by first ignoring the binary restriction on the uncertain parameters, and then using any (exact or approximate) method that is designed to address problems with continuous uncertainties.
However, recent studies \cite{subramanyam2020data,mittal2020robust} have shown that such approximations result in unnecessarily conservative (i.e., suboptimal) solutions even when the first- and second-stage feasible regions $\mathcal{X}$ and $\mathcal{Y}$ are convex.

\subsubsection{Overview of Benders decomposition algorithm}\label{sec:formulation:literature:Benders}
Algorithm \ref{algo:benders} outlines a basic Benders decomposition scheme for solving \ref{eq:two_stage_ro_general} when the second-stage decisions are continuous ($\mathcal{Y} = \mathbb{R}^{n_2}_{+}$) %
and the relatively complete and sufficiently expensive recourse assumptions are satisfied.
The basic idea is to compute lower bounds $LB$ in line~\ref{algo:benders:lb-update} by iteratively refining an inner approximation of
$\sup_{\bm{\xi} \in \Xi} \mathcal{Q}(\bm{x}, \bm{\xi})$.
Indeed, the lower bounding problem~\eqref{eq:benders_lb_update} can be derived %
by noting that strong LP duality implies that we have:
\begin{align}
    &\sup_{\bm{\xi} \in \Xi} \mathcal{Q}(\bm{x}, \bm{\xi}) \notag \\
    =&
    \sup_{\bm{\xi} \in \Xi} \bm{c}(\bm{\xi})^\top \bm{x}  + \inf_{\bm{y} \in \mathcal{Y}}
    \left\{ \bm{d}(\bm{\xi})^\top \bm{y}
    : \bm{W} \bm{y} \geq \bm{h}(\bm{\xi}) - \bm{T}\bm{x}
    \right\} \notag \\
    =& \sup_{\bm{\xi} \in \Xi} \bm{c}(\bm{\xi})^\top \bm{x}  + \sup_{\bm{\mu} \in \mathbb{R}^m_{+}}
    \left\{\bm{\mu}^\top \left(\bm{h}(\bm{\xi}) - \bm{T}\bm{x}\right)
    : \bm{W}^\top \bm{\mu} \leq \bm{d}(\bm{\xi})
    \right\} \notag \\
    =& \sup_{\bm{\xi}, \bm{\mu}}
    \left\{
    \bm{c}(\bm{\xi})^\top\bm{x} +
    \bm{\mu}^\top \big(\bm{h}(\bm{\xi}) - \bm{T}\bm{x}\big) :
    \begin{aligned}
        &\bm{\xi} \in \Xi, \; \bm{\mu} \in \mathbb{R}^m_{+} \\
        &\bm{W}^\top \bm{\mu} \leq \bm{d}(\bm{\xi})
    \end{aligned}
    \right\}, \label{eq:worst_case_problem_duality} \tag{$\mathcal{WC}_\mathcal{D}$}
\end{align}
where the inner maximization over $\bm{\mu}$ is feasible for all $\bm{x} \in \mathcal{X}$ and $\bm{\xi} \in \Xi$
under the relatively complete and sufficiently expensive recourse assumptions.
Therefore, Algorithm~\ref{algo:benders} implicitly enumerates the extreme points of the feasible region of the worst-case problem~\ref{eq:worst_case_problem_duality} (the subscript $\mathcal{D}$ stands for ``duality'') in the set $\mathcal{O}$.
In most implementations, the worst-case parameter realization $\hat{\bm{\xi}}$ and associated dual values $\hat{\bm{\mu}}$ are not computed individually as indicated in line~\ref{algo:benders:worst-case-computation};
rather, they are simultaneously calculated by solving problem \eqref{eq:worst_case_problem_duality} directly.
The algorithm is guaranteed to terminate after finitely many iterations, since the feasible region of \ref{eq:worst_case_problem_duality} has a finite number of extreme points.
Finally, we note that the lower bounding problem~\eqref{eq:benders_lb_update} can be readily reformulated by introducing an epigraphical variable and solved using an MILP solver.

\begin{algorithm}[!htbp]
    \caption{Basic Benders decomposition scheme to solve \ref{eq:two_stage_ro_general} when $\mathcal{Y} = \mathbb{R}^{n_2}_{+}$ and the relatively complete recourse assumption is satisfied}
    \label{algo:benders}
    \begin{algorithmic}[1]
        \renewcommand{\algorithmicrequire}{\textbf{Input:}}
        \renewcommand{\algorithmicensure}{\textbf{Output:}}
        \REQUIRE Problem \ref{eq:two_stage_ro_general}, tolerance $\epsilon > 0$
        \ENSURE Optimal solution $\bm{x}^\star$ of \ref{eq:two_stage_ro_general}
        \STATE Initialize $LB = -\infty$, $UB = +\infty$, $\bm{x}^\star = \emptyset$, $\mathcal{O} = \emptyset$.
        \REPEAT
        \STATE Set $LB$ and $\hat{\bm{x}}$ as the optimal objective value and solution of problem \eqref{eq:benders_lb_update}, respectively:
        \begin{equation}\label{eq:benders_lb_update}
            \inf_{\bm{x} \in \mathcal{X}} \left\{ \sup_{(\bm{\xi}, \bm{\mu}) \in \mathcal{O}} \left\{\bm{c}(\bm{\xi})^\top\bm{x} + \bm{\mu}^\top\big(\bm{h}(\bm{\xi}) - \bm{T}\bm{x}\big)\right\} 
            \right\}
        \end{equation}
        \label{algo:benders:lb-update}
        \STATE Compute 
        $
        \hat{\bm{\xi}} \in \mathop{\arg\max}\limits_{\bm{\xi} \in \Xi} \mathcal{Q}(\hat{\bm{x}}, \bm{\xi})
        $
        \label{algo:benders:worst-case-computation}
        \STATE Update $\mathcal{O}\gets \mathcal{O} \cup \{(\hat{\bm{\xi}}, \hat{\bm{\mu}})\}$, where
        $
        \hat{\bm{\mu}} \in \mathop{\arg\max}\limits_{\bm{\mu} \in \mathbb{R}^m_{+}} \left\{
        \big(\bm{h}(\hat{\bm{\xi}}) - \bm{T}\hat{\bm{x}}\big)^\top \bm{\mu} :
        \bm{W}^\top \bm{\mu} \leq \bm{d}(\hat{\bm{\xi}})
        \right\}
        $
        \IIf {$UB > \mathcal{Q}(\hat{\bm{x}}, \hat{\bm{\xi}})$}
        update
        $UB \gets \mathcal{Q}(\hat{\bm{x}}, \hat{\bm{\xi}})$ and
        $\bm{x}^\star \gets \hat{\bm{x}}$.
        \EndIIf
        \UNTIL {$UB - LB \leq \epsilon$}
    \end{algorithmic}
\end{algorithm}

\subsubsection{Overview of column-and-constraint generation algorithm}\label{sec:formulation:literature:ccg}

Algorithm \ref{algo:ccg} describes a basic column-and-constraint generation scheme for solving \ref{eq:two_stage_ro_general}
when the relatively complete recourse assumption is satisfied.
The key difference with the Benders algorithm is that instead of %
inner approximating
$\sup_{\bm{\xi} \in \Xi} \mathcal{Q}(\bm{x}, \bm{\xi})$
using dual values,
the algorithm obtains lower bounds by adding second-stage variables $\bm{y}^{(\bm{\xi})} \in \mathcal{Y}$ and constraints $\bm{T}\bm{x} + \bm{W} \bm{y}^{(\bm{\xi})} \geq \bm{h}(\bm{\xi})$ for every worst-case parameter realization $\bm{\xi}$ recorded in the set $\mathcal{R}$ (see line~\ref{algo:ccg:lb-update}).
Notably, this algorithm is agnostic to the presence of %
mixed-integer recourse decisions, as long as the worst-case parameter realizations $\hat{\bm{\xi}}$ can be efficiently computed in line~\ref{algo:ccg:worst-case-computation}.
In particular, if this is done in finitely many iterations, then Algorithm~\ref{algo:ccg} also terminates finitely since $\Xi$ is a finite set.

\begin{algorithm}[!htbp]
    \caption{Basic column-and-constraint generation scheme to solve \ref{eq:two_stage_ro_general}
        when the relatively complete recourse assumption is satisfied}
    \label{algo:ccg}
    \begin{algorithmic}[1]
        \renewcommand{\algorithmicrequire}{\textbf{Input:}}
        \renewcommand{\algorithmicensure}{\textbf{Output:}}
        \REQUIRE Problem \ref{eq:two_stage_ro_general}, tolerance $\epsilon > 0$
        \ENSURE Optimal solution $\bm{x}^\star$ of \ref{eq:two_stage_ro_general}
        \STATE Initialize $LB = -\infty$, $UB = +\infty$, $\bm{x}^\star = \emptyset$, $\mathcal{R} = \emptyset$.
        \REPEAT
        \STATE Set $LB$ and $\hat{\bm{x}}$ as the optimal objective value and (projected) solution of problem \eqref{eq:ccg_lb_update}:
        \begin{equation}\label{eq:ccg_lb_update}
            \inf_{\bm{x}, \eta, \bm{y}} \left\{
            \eta :
            \begin{aligned}
                & \bm{x} \in \mathcal{X}, \;\; \eta \in \mathbb{R}, \;\; \bm{y}^{(\bm{\xi})} \in \mathcal{Y}, \; \bm{\xi} \in \mathcal{R}, \\
                & \eta \geq \bm{c}(\bm{\xi})^\top\bm{x} + \bm{d}(\bm{\xi})^\top\bm{y}^{(\bm{\xi})}, \; \bm{\xi} \in \mathcal{R}, \\
                & \bm{T}\bm{x} + \bm{W} \bm{y}^{(\bm{\xi})} \geq \bm{h}(\bm{\xi}), \; \bm{\xi} \in \mathcal{R}.
            \end{aligned}
            \right\}
        \end{equation}
        \label{algo:ccg:lb-update}
        \STATE Compute 
        $
        \hat{\bm{\xi}} \in \mathop{\arg\max}\limits_{\bm{\xi} \in \Xi} \mathcal{Q}(\hat{\bm{x}}, \bm{\xi})
        $
        \label{algo:ccg:worst-case-computation}
        \STATE Update $\mathcal{R} \gets \mathcal{R} \cup \{\hat{\bm{\xi}}\}$.
        \IIf {$UB > \mathcal{Q}(\hat{\bm{x}}, \hat{\bm{\xi}})$}
        update
        $UB \gets \mathcal{Q}(\hat{\bm{x}}, \hat{\bm{\xi}})$ and
        $\bm{x}^\star \gets \hat{\bm{x}}$.
        \EndIIf
        \UNTIL {$UB - LB \leq \epsilon$}
    \end{algorithmic}
\end{algorithm}

When the second-stage decisions are continuous ($\mathcal{Y} = \mathbb{R}^{n_2}_{+}$), %
the worst-case parameter realizations in line~\ref{algo:ccg:worst-case-computation} can be computed by solving the duality-based formulation \eqref{eq:worst_case_problem_duality} similar to the Benders scheme.
However, the original paper on column-and-constraint generation \cite{zhao2012exact} proposes an alternative formulation using the Karush-Kuhn-Tucker (KKT) conditions of the second-stage problem: %
\begin{align}
    &\sup_{\bm{\xi} \in \Xi} \mathcal{Q}(\bm{x}, \bm{\xi}) \notag \\
    =&
    \sup_{\bm{\xi} \in \Xi} \bm{c}(\bm{\xi})^\top \bm{x}  + \inf_{\bm{y} \in \mathcal{Y}}
    \left\{\bm{d}(\bm{\xi})^\top \bm{y}
    : \bm{W} \bm{y} \geq \bm{h}(\bm{\xi}) - \bm{T}\bm{x}
    \right\} \notag \\
    =& \sup_{\bm{\xi}, \bm{y}, \bm{\mu}} %
    \left\{
    \bm{c}(\bm{\xi})^\top \bm{x}  + \bm{d}(\bm{\xi})^\top \bm{y} :
    \begin{aligned}
        &\bm{\xi} \in \Xi, \; \bm{y} \in \mathcal{Y}, \; \bm{\mu} \in \mathbb{R}^m_{+}\\
        &\bm{T}\bm{x} + \bm{W} \bm{y} \geq \bm{h}(\bm{\xi}) \\
        &\bm{W}^\top \bm{\mu} \leq \bm{d}_0 \\
        &\bm{\mu} \circ \left[\bm{W} \bm{y} + \bm{T}\bm{x} - \bm{h}(\bm{\xi})\right] = \bm{0} \\
        &\bm{y} \circ \left[\bm{W}^\top \bm{\mu} - \bm{d}_0\right] = \bm{0}
    \end{aligned}
    \right\}. \label{eq:worst_case_problem_kkt} \tag{$\mathcal{WC}_\mathcal{K}$}
\end{align}
Here, we use $\bm{u} \circ \bm{v} = \left(u_1 v_1, \ldots, u_n v_n\right)$ to denote the Hadamard product between two vectors $\bm{u}, \bm{v} \in \mathbb{R}^n$, and the subscript $\mathcal{K}$ in \eqref{eq:worst_case_problem_kkt} stands for ``KKT''.
When the second-stage decisions are mixed-integer, \cite{zeng2013solving} have proposed a so-called inner-level column-and-constraint generation algorithm to compute worst-case parameter realizations.
However, this algorithm also requires the solution of a %
problem similar to \eqref{eq:worst_case_problem_duality} or \eqref{eq:worst_case_problem_kkt} for fixed values of the discrete second-stage decisions.
We elaborate on this further in Section~\ref{sec:algorithm}.

\subsubsection{Solution of the worst-case parameter realization problems}\label{sec:formulation:literature:bilinear}
Both the Benders decomposition and column-and-constraint generation algorithms require the solution of either the duality-based \eqref{eq:worst_case_problem_duality} or KKT-based \eqref{eq:worst_case_problem_kkt} optimization problems to compute worst-case parameter realizations (see line~\ref{algo:benders:worst-case-computation} of Algorithms \ref{algo:benders} and \ref{algo:ccg}).
These are mixed-integer bilinear problems and in practice, they are reformulated as MILP problems using one of two approaches.

In the first approach, they are reformulated using additional variables and indicator constraints.
For example, suppose that $\bm{h}(\bm{\xi}) = \bm{h}_0 + \bm{H} \bm{\xi}$.
Then, by introducing $\bm{\Omega} \in \mathbb{R}^{m \times n_p}$, the bilinear term appearing in the objective function of \eqref{eq:worst_case_problem_duality} can be reformulated as follows:
\begin{equation*}
    \bm{\mu}^\top \bm{h}(\bm{\xi})
    =
    \bm{\mu}^\top \bm{h}_0 +
    \tr(\bm{H}^\top \bm{\Omega})
\end{equation*}
where $\tr(\bm{A})$ denotes the trace of a square matrix $A$.
Additional indicator constraints enforcing the definition $\bm{\Omega} = \bm{\mu} \bm{\xi}^\top$ can be modeled as follows:
\begin{alignat*}{3}
    \xi_j = 0 &\implies \Omega_{ij} = 0, &\; (i, j) \in [m] \times [n_p], \\
    \xi_j = 1 &\implies \Omega_{ij} = \mu_i, &\; (i, j) \in [m] \times [n_p].
\end{alignat*}
The $\bm{d}(\bm{\xi})^\top \bm{y}$ term in the objective function of formulation \eqref{eq:worst_case_problem_kkt} can be reformulated similarly by adding $n_2\cdot n_p$ new variables and constraints.
The complementary slackness condition
$\bm{\mu} \circ \left[\bm{W} \bm{y} + \bm{T}\bm{x} - \bm{h}(\bm{\xi})\right] = \bm{0}$
in \eqref{eq:worst_case_problem_kkt}
can be reformulated by introducing $\bm{\alpha} \in \{0, 1\}^m$ and the indicator constraints:
\begin{alignat*}{3}
    \alpha_i = 0 &\implies \mu_i = 0, &\; i \in [m], \\
    \alpha_i = 1 &\implies \one_i^\top \left[\bm{W} \bm{y} + \bm{T}\bm{x} - \bm{h}(\bm{\xi})\right] = 0, &\; i \in [m].
\end{alignat*}
The condition
$\bm{y} \circ \left[\bm{W}^\top \bm{\mu} - \bm{d}(\bm{\xi})\right] = \bm{0}$
can be reformulated similarly by adding $n_2$ additional binary variables.

In the second approach, the mixed-integer bilinear problems are reformulated as MILP problems by using the same additional variables as in the indicator approach but the bilinear terms are linearized using big-M constants.
For example, we can model $\bm{\Omega} = \bm{\mu} \bm{\xi}^\top$ in \eqref{eq:worst_case_problem_duality} as follows:
\begin{equation*}
    \max\{\mu_i - M (1 - \xi_j), 0\} \leq \Omega_{ij} \leq \min\{M \xi_j, \mu_i\}, \; (i, j) \in [m] \times [n_p],
\end{equation*}
where $M$ is a valid upper bound on $\mu_i$ (which may not always be available or computable).
Similarly, the complementary slackness condition
$\bm{\mu} \circ [\bm{W} \bm{y} + \bm{T}\bm{x} - \bm{h}(\bm{\xi})] = \bm{0}$
in \eqref{eq:worst_case_problem_kkt}
can be linearized by adding $2m$ new constraints:
\begin{equation*}
    \bm{\mu} \leq M\bm{\alpha},
    \qquad
    \bm{W} \bm{y} + \bm{T}\bm{x} - \bm{h}(\bm{\xi}) \leq M(\bm{e} - \bm{\alpha}),
\end{equation*}
and
$\bm{y} \circ \left[\bm{W}^\top \bm{\mu} - \bm{d}(\bm{\xi})\right] = \bm{0}$
can be linearized similarly with $2n_2$ constraints.

\subsection{Contributions}\label{sec:formulation:contributions}
We present a new alternative method to compute worst-case parameter realizations %
that can be integrated in either the Benders or column-and-constraint generation algorithms. %
The method uses a Lagrangian dual reformulation that moves all uncertainties from the constraints to the objective function using a single (i.e., scalar-valued) and finite Lagrange multiplier.
The optimal value of this multiplier can be generically and efficiently computed for instances of both \ref{eq:two_stage_ro_general} and \ref{eq:two_stage_ro_indicator}.
Notably, our method does not suffer from the computational issues which plague the numerical solution of the aforementioned indicator or linearized MILP reformulations that are used in existing implementations.
Specifically, we highlight the following points.
\begin{enumerate}
    \item The proposed method uses a Lagrangian dual formulation that adds fewer variables and constraints compared to existing methods; see Table \ref{table:formulation_sizes} for a comparison when the second-stage decisions are continuous %
    and the relatively complete recourse assumption is satisfied.
    We note, however, that the method can also address the absence of relatively complete recourse, presence of mixed-integer recourse, and uncertain objective functions.
    \begin{table}[!htbp]
        \centering
        \caption{Formulation sizes of methods that compute worst-case parameter realizations when $\mathcal{Y} = \mathbb{R}^{n_2}_{+}$ %
            and the relatively complete recourse assumption is satisfied (excluding decision variables for $\bm{\xi}$, constraints in $\Xi$, and variable bounds).}
        \label{table:formulation_sizes}
        \begin{tabularx}{\textwidth}{lCCC}
            \toprule
            Method & Binary variables & Continuous variables & Constraints \\
            \midrule
            \ref{eq:worst_case_problem_kkt} (linearized) & $m+n_2$ & $m+n_2+n_2n_p$ & $3m+3n_2+3n_2n_p$ \\
            \ref{eq:worst_case_problem_kkt} (indicator) & $m+n_2$ & $m+n_2+n_2n_p$ & $3m+3n_2+2n_2n_p$ \\
            \ref{eq:worst_case_problem_duality} (linearized) & $-$ & $m + mn_p$ & $3mn_p+n_2$ \\
            \ref{eq:worst_case_problem_duality} (indicator) & $-$ & $m + mn_p$ & $2mn_p+n_2$ \\
            Proposed (problem \ref{eq:two_stage_ro_general}) & $-$ & $m+n_p$ & $m+n_2$ \\
            Proposed (problem \ref{eq:two_stage_ro_indicator}) & $-$ & $m$ & $n_2$ \\
            \bottomrule
        \end{tabularx}
    \end{table}
    
    \item The optimal Lagrange multiplier can be generically and efficiently computed for any instance of \ref{eq:two_stage_ro_general} and \ref{eq:two_stage_ro_indicator}. This is in contrast to the linearized approach where decision-independent %
    big-M upper bounds on the dual variables $\bm{\mu}$ are typically unknown.
    Indeed, the use of arbitrary big-M values here can be problematic: on the one hand, small values can restrict the true feasible region of $\{\mathcal{Q}(\bm{x}, \bm{\xi}) : \bm{\xi} \in \Xi\}$ and hence relax the two-stage problem, leading to suboptimal decisions; on the other hand, large values can cause numerical issues.
    The indicator approach circumvents these difficulties, but its solution can require higher computational effort.
    
    \item Since our method moves all uncertainty to the objective function, it can potentially enable existing methods \cite{kammerling2020oracle,arslan2021decomposition} that address uncertainty only in the objective to solve also problems with uncertainty-affected constraints.
    
    \item We provide evidence of the computational benefits of our proposed method over existing methods via numerical experiments on network design, facility location, and staff rostering problems that have been previously studied in the literature. We provide open source access to our code.
\end{enumerate}

The rest of this paper is organized as follows. Section~\ref{sec:reformulation} presents the Lagrangian dual formulation and the main analytical results that will lay the foundation for Section~\ref{sec:algorithm}, where we describe our proposed algorithms for computing worst-case parameter realizations and how they can be integrated in the Benders or column-and-constraint generation algorithms.
Section~\ref{sec:experiments} discusses numerical experiments and the associated findings.
Finally, Section~\ref{sec:conclusions} offers concluding remarks and directions for future work.

\section{Lagrangian Dual Formulation}\label{sec:reformulation}

Throughout the paper, we make the following assumptions about \ref{eq:two_stage_ro_general} and \ref{eq:two_stage_ro_indicator}.
\begin{enumerate}[label=\textbf{(A\arabic*)},leftmargin=*]
\item\label{assume:sufficiently_expensive_recourse_general} 
Problem \ref{eq:two_stage_ro_general} satisfies
$
\inf \big\{\bm{d}(\bm{\xi})^\top \bm{y} : \bm{y} \in \mathcal{Y} \big\} > -\infty
$
for all $\bm{\xi} \in \Xi$.

\item\label{assume:sufficiently_expensive_recourse_indicator} 
Problem \ref{eq:two_stage_ro_indicator} satisfies
$
\inf \big\{\bm{d}(\bm{\xi})^\top \bm{y} : \bm{y} \in \mathcal{Y}, \; \bm{g}(\bm{x}, \bm{y}) \geq \bm{0} \big\} > -\infty
$
for all $\bm{\xi} \in \Xi$ and $\bm{x} \in \mathcal{X}$.
\end{enumerate}
These assumptions essentially imply that both \ref{eq:two_stage_ro_general} and \ref{eq:two_stage_ro_indicator} have sufficiently expensive recourse, and they are always satisfied whenever $\mathcal{Y}$ is compact or when the objective function is bounded from below by some nominal value.
They mostly serve to simplify exposition and to eliminate pathological cases that don't typically arise in applications.
We emphasize that we do not assume relatively complete recourse for either problem.

\subsection{Main results}

Our central idea relies on constructing certain Lagrangian and penalty functions of the second-stage problems.
For problem \ref{eq:two_stage_ro_general}, we define the Lagrangian function
$
\mathcal{L} : \mathcal{X} \times \{0, 1\}^{n_p} \times \mathbb{R}_{+} \to \mathbb{R} \cup \{+\infty\}
$
and a corresponding penalty function
$
\phi : [0, 1]^{n_p} \times \{0, 1\}^{n_p} \to \mathbb{R}
$
as follows:
\begin{align}
    \mathcal{L}(\bm{x}, \bm{\xi}, \lambda)
    &=\left[
    \begin{aligned}
        \mathop{\text{minimize}}_{\bm{y} \in \mathcal{Y}, \bm{z} \in \mathbb{R}^{n_p}_{+}} \;\; &
        \bm{c}(\bm{\xi})^\top \bm{x}  + \bm{d}(\bm{\xi})^\top \bm{y} + \lambda \phi(\bm{z}, \bm{\xi}) \\
        \text{subject to} \;\; & \bm{T}\bm{x} + \bm{W} \bm{y} \geq \bm{h}(\bm{z}), \;\; \bm{z} \leq \bm{e}.
    \end{aligned}
    \right]
    \label{eq:lagrangian_general} \\
    \phi(\bm{z}, \bm{\xi})
    &= \one^\top \bm{z} + \one^\top \bm{\xi} - 2 \bm{z}^\top \bm{\xi}.
    \label{eq:penalty_general}
\end{align}
Similarly, for problem \ref{eq:two_stage_ro_indicator}, we define the Lagrangian
$
\mathcal{L}_\mathcal{I} : \mathcal{X} \times \{0, 1\}^{n_p} \times \mathbb{R}_{+} \to \mathbb{R} \cup \{+\infty\}
$
and its penalty function
$
\phi_\mathcal{I} : \mathcal{X} \times \mathcal{Y} \times \{0, 1\}^{n_p} \to \mathbb{R}
$
as follows:
\begin{align}
    \mathcal{L}_\mathcal{I}(\bm{x}, \bm{\xi}, \lambda)
    &=\left[
    \begin{aligned}
        \mathop{\text{minimize}}_{\bm{y} \in \mathcal{Y}} \;\; &
        \bm{c}(\bm{\xi})^\top \bm{x}  + \bm{d}(\bm{\xi})^\top \bm{y} + \lambda \phi_\mathcal{I}(\bm{x}, \bm{y}, \bm{\xi}) \\
        \text{subject to} \;\; & \bm{g}(\bm{x}, \bm{y}) \geq \bm{0}.
    \end{aligned}
    \right]
    \label{eq:lagrangian_indicator} \\
    \phi_\mathcal{I}(\bm{x}, \bm{y}, \bm{\xi})
    &= \sum_{j \in [n_p]} \sum_{i \in \mathcal{I}_j^1} \xi_j g_i(\bm{x}, \bm{y}) + \sum_{j \in [n_p]}\sum_{i \in \mathcal{I}_j^0} (1 - \xi_j) g_i(\bm{x}, \bm{y}).
    \label{eq:penalty_indicator}
\end{align}

Observe that the uncertain parameters $\bm{\xi}$ appear only in the objective function of $\mathcal{L}$ and $\mathcal{L}_\mathcal{I}$ via the penalty functions $\phi$ and $\phi_\mathcal{I}$, respectively.
In fact, the latter satisfy some desirable properties that explain why $\mathcal{L}$ and $\mathcal{L}_\mathcal{I}$ constitute Lagrangian relaxations of the second-stage problems, $\mathcal{Q}$ and $\mathcal{Q}_\mathcal{I}$, respectively.

\begin{lemma}[Properties of the penalty functions]\label{lemma:penalty_function_properties}
    For any $\bm{x} \in \mathcal{X}$ and $\bm{\xi} \in \Xi$, the penalty functions
    $\phi$ and $\phi_\mathcal{I}$
    satisfy the following properties.
    \begin{enumerate}
        \item $\phi(\bm{z}, \bm{\xi}) \geq 0$ for all $z \in [0, 1]^{n_p}$ and $\phi(\bm{z}, \bm{\xi}) = 0$ if and only if $\bm{z} = \bm{\xi}$.
        \item If $(\bm{y}, \bm{z})$ is feasible in problem~\eqref{eq:lagrangian_general} with $\phi(\bm{z}, \bm{\xi}) = 0$,
        then $\bm{y}$ is also feasible in problem $\mathcal{Q}(\bm{x}, \bm{\xi})$.
        \item If $\bm{y}$ is feasible in problem~\eqref{eq:lagrangian_indicator},
        then
        $\phi_\mathcal{I}(\bm{x}, \bm{y}, \bm{\xi}) \geq 0$ and $\phi_\mathcal{I}(\bm{x}, \bm{y}, \bm{\xi}) = 0$ if and only if $\bm{y}$ is feasible in problem $\mathcal{Q}_\mathcal{I}(\bm{x}, \bm{\xi})$.
    \end{enumerate}
\end{lemma}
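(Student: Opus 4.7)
The plan is to establish each of the three claims by direct case analysis on the binary entries of $\bm{\xi}$, exploiting the fact that the penalty functions are designed precisely so that every term in the summation is a product of non-negative quantities. There is no deep obstacle here; the lemma is essentially a verification that the penalty functions $\phi$ and $\phi_\mathcal{I}$ measure, term-by-term, the violation of the equalities $\bm{z} = \bm{\xi}$ and of the indicator constraints in $\mathcal{Q}_\mathcal{I}$, respectively.

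For claim 1, I would rewrite $\phi(\bm{z}, \bm{\xi}) = \sum_{j \in [n_p]} (z_j + \xi_j - 2 z_j \xi_j)$ and split on $\xi_j \in \{0,1\}$: when $\xi_j = 0$ the $j$-th summand reduces to $z_j$, and when $\xi_j = 1$ it reduces to $1 - z_j$. Both expressions are non-negative on $[0,1]$ and vanish exactly when $z_j = \xi_j$. Non-negativity of $\phi$ follows by summation, and since a sum of non-negative terms is zero iff every term is zero, $\phi(\bm{z}, \bm{\xi}) = 0$ iff $z_j = \xi_j$ for all $j$, i.e.\ $\bm{z} = \bm{\xi}$.

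For claim 2, I would invoke claim 1 to conclude that $\phi(\bm{z}, \bm{\xi}) = 0$ forces $\bm{z} = \bm{\xi}$, so that the constraint $\bm{T}\bm{x} + \bm{W} \bm{y} \geq \bm{h}(\bm{z})$ in \eqref{eq:lagrangian_general} becomes exactly $\bm{T}\bm{x} + \bm{W}\bm{y} \geq \bm{h}(\bm{\xi})$. Combined with $\bm{y} \in \mathcal{Y}$, which is preserved from the Lagrangian problem, this is precisely feasibility in $\mathcal{Q}(\bm{x}, \bm{\xi})$.

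For claim 3, feasibility of $\bm{y}$ in \eqref{eq:lagrangian_indicator} gives $g_i(\bm{x}, \bm{y}) \geq 0$ for all $i \in [m]$, while $\xi_j, 1 - \xi_j \in \{0,1\} \subseteq \mathbb{R}_+$. Hence every summand in $\phi_\mathcal{I}(\bm{x}, \bm{y}, \bm{\xi})$ is a product of non-negative numbers, proving $\phi_\mathcal{I} \geq 0$. For the equality characterization, $\phi_\mathcal{I} = 0$ iff each summand vanishes; for a fixed $j$, the summands over $\mathcal{I}_j^1$ are active only when $\xi_j = 1$, in which case they force $g_i(\bm{x}, \bm{y}) = 0$ for all $i \in \mathcal{I}_j^1$, and symmetrically the summands over $\mathcal{I}_j^0$ force $g_i(\bm{x}, \bm{y}) = 0$ for all $i \in \mathcal{I}_j^0$ when $\xi_j = 0$. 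These are exactly the indicator constraints defining $\mathcal{Q}_\mathcal{I}(\bm{x}, \bm{\xi})$, so $\phi_\mathcal{I}(\bm{x}, \bm{y}, \bm{\xi}) = 0$ iff $\bm{y}$ is feasible in $\mathcal{Q}_\mathcal{I}(\bm{x}, \bm{\xi})$. The only step that requires any care is keeping the logical quantifiers straight in the two ``iff'' statements; otherwise the argument is entirely elementary and does not use the affineness of $\bm{g}$ beyond what is needed for $\bm{g}(\bm{x}, \bm{y}) \geq \bm{0}$ to be well-defined.
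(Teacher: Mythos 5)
Your proposal is correct and follows essentially the same route as the paper: parts 2 and 3 are argued identically (part 2 by reduction to part 1 with $\bm{z}=\bm{\xi}$, part 3 by noting each summand of $\phi_\mathcal{I}$ is non-negative and vanishes exactly when the corresponding indicator constraint holds). The only cosmetic difference is in part 1, where you verify non-negativity and the equality characterization coordinate-wise by splitting on $\xi_j \in \{0,1\}$, whereas the paper uses the bound $\phi(\bm{z},\bm{\xi}) \geq \|\bm{z}-\bm{\xi}\|_2^2$; both are elementary and equally valid.
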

\begin{proof}
    The first statement follows from the following observation:
    \[
    \phi(\bm{z}, \bm{\xi})
    = \one^\top \bm{z} + \one^\top \bm{\xi} - 2 \bm{z}^\top \bm{\xi}
    \geq 
    \bm{z}^\top \bm{z} + \bm{\xi}^\top \bm{\xi} - 2 \bm{z}^\top \bm{\xi}
    =
    \|\bm{z} - \bm{\xi}\|_2^2
    \geq 0,
    \]
    where the inequality follows from $\bm{z}, \bm{\xi} \in [0, 1]^{n_p}$.
    
    The second statement is a consequence of the first, because under the stated conditions, we have $\bm{T}\bm{x} + \bm{W} \bm{y} \geq \bm{h}(\bm{z})$ as well as $\bm{z} = \bm{\xi}$, which taken together imply that $\bm{T}\bm{x} + \bm{W} \bm{y} \geq \bm{h}(\bm{\xi})$; that is, $\bm{y}$ is feasible in problem $\mathcal{Q}(\bm{x}, \bm{\xi})$.
    
    Finally, the third statement follows directly from the observation that each summand in the expression defining $\phi_\mathcal{I}$ is non-negative by construction; therefore, it is equal to $0$ if and only if for each $j \in [n_p]$ and $i \in \mathcal{I}_j^1$: either $\xi_j = 0$ or $g_i(\bm{x}, \bm{y}) = 0$, which is equivalent to the first implication in problem $\mathcal{Q}_\mathcal{I}(\bm{x}, \bm{\xi})$; and 
    for each $j \in [n_p]$ and $i \in \mathcal{I}_j^0$: either $(1 - \xi_j) = 0$ or $g_i(\bm{x}, \bm{y}) = 0$, which is equivalent to the second implication in problem $\mathcal{Q}_\mathcal{I}(\bm{x}, \bm{\xi})$.
\end{proof}

\begin{lemma}[Weak duality]\label{lemma:weak_duality}
    For any $\bm{x} \in \mathcal{X}$ and $\bm{\xi} \in \Xi$, we have:
    \begin{gather*}
        \mathcal{Q}(\bm{x}, \bm{\xi}) \geq \sup_{\lambda \in \mathbb{R}_{+}} \mathcal{L}(\bm{x}, \bm{\xi}, \lambda) \\
        \mathcal{Q}_\mathcal{I}(\bm{x}, \bm{\xi}) \geq \sup_{\lambda \in \mathbb{R}_{+}} \mathcal{L}_\mathcal{I}(\bm{x}, \bm{\xi}, \lambda)
    \end{gather*}
\end{lemma}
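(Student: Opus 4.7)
The plan is to run the standard weak-duality argument: for each $\bm{\xi} \in \Xi$ and each $\bm{y}$ feasible in the second-stage problem, exhibit a point that is feasible in the Lagrangian relaxation and has identical objective value, so the Lagrangian minimum is bounded above by the original second-stage objective at $\bm{y}$. The role of Lemma~\ref{lemma:penalty_function_properties} is precisely to provide the ``zero-penalty'' points that make this transfer work.

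First, I would handle the degenerate case: if $\mathcal{Q}(\bm{x}, \bm{\xi}) = +\infty$ (respectively $\mathcal{Q}_\mathcal{I}(\bm{x}, \bm{\xi}) = +\infty$), the stated inequality holds trivially, so assume the feasible set of the inner problem is non-empty and pick any feasible $\bm{y}$.

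For \ref{eq:two_stage_ro_general}, set $\bm{z} := \bm{\xi}$. Since $\bm{\xi} \in \Xi \subseteq \{0,1\}^{n_p}$, we have $\bm{z} \in \mathbb{R}_{+}^{n_p}$ and $\bm{z} \leq \bm{e}$; moreover $\bm{h}(\bm{z}) = \bm{h}(\bm{\xi})$, so the coupling constraint $\bm{T}\bm{x} + \bm{W}\bm{y} \geq \bm{h}(\bm{z})$ is satisfied. Thus $(\bm{y}, \bm{z})$ is feasible in problem~\eqref{eq:lagrangian_general}. By the first part of Lemma~\ref{lemma:penalty_function_properties}, $\phi(\bm{\xi}, \bm{\xi}) = 0$, so the Lagrangian objective at $(\bm{y}, \bm{z})$ evaluates to $\bm{c}(\bm{\xi})^\top \bm{x} + \bm{d}(\bm{\xi})^\top \bm{y}$, independent of $\lambda$. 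Hence for every $\lambda \in \mathbb{R}_{+}$,
\[
\mathcal{L}(\bm{x}, \bm{\xi}, \lambda) \;\leq\; \bm{c}(\bm{\xi})^\top \bm{x} + \bm{d}(\bm{\xi})^\top \bm{y}.
\]
Taking the infimum over $\bm{y}$ feasible in $\mathcal{Q}(\bm{x}, \bm{\xi})$ yields $\mathcal{L}(\bm{x}, \bm{\xi}, \lambda) \leq \mathcal{Q}(\bm{x}, \bm{\xi})$ for every $\lambda \geq 0$, and the supremum over $\lambda$ gives the first claim.

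For \ref{eq:two_stage_ro_indicator}, the argument is even more direct: any $\bm{y}$ feasible in $\mathcal{Q}_\mathcal{I}(\bm{x}, \bm{\xi})$ satisfies both $\bm{y} \in \mathcal{Y}$ and $\bm{g}(\bm{x}, \bm{y}) \geq \bm{0}$, so $\bm{y}$ is feasible in problem~\eqref{eq:lagrangian_indicator}. By the third part of Lemma~\ref{lemma:penalty_function_properties}, the indicator feasibility of $\bm{y}$ forces $\phi_\mathcal{I}(\bm{x}, \bm{y}, \bm{\xi}) = 0$, so the Lagrangian objective again matches the $\mathcal{Q}_\mathcal{I}$ objective at $\bm{y}$. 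The same infimum/supremum argument then yields the second inequality. No real obstacle arises here; the only point requiring care is recording that $\lambda \geq 0$ is not used in constructing the feasible transfer but only to ensure the inequality propagates when the penalty is non-negative, which it is by Lemma~\ref{lemma:penalty_function_properties}.
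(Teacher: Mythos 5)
Your proposal is correct and follows essentially the same argument as the paper: for any $\bm{y}$ feasible in $\mathcal{Q}(\bm{x},\bm{\xi})$ (resp.\ $\mathcal{Q}_\mathcal{I}(\bm{x},\bm{\xi})$), the point $(\bm{y},\bm{\xi})$ (resp.\ $\bm{y}$) is feasible in the Lagrangian problem with identical objective value because the penalty vanishes, so the Lagrangian is a relaxation for every $\lambda \in \mathbb{R}_{+}$. The extra explicit handling of the $+\infty$ case and the infimum/supremum bookkeeping are just more detailed write-ups of the same relaxation argument.
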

\begin{proof}
    One can readily verify that if $\bm{y}$ is feasible in $\mathcal{Q}(\bm{x}, \bm{\xi})$, then $(\bm{y}, \bm{\xi})$ is feasible in $\mathcal{L}(\bm{x}, \bm{\xi}, \lambda)$ for any $\lambda \in \mathbb{R}_{+}$.
    Moreover, it has the same objective value since $\phi(\bm{\xi}, \bm{\xi}) = 0$. Therefore, $\mathcal{L}(\bm{x}, \bm{\xi}, \lambda)$ is a relaxation of $\mathcal{Q}(\bm{x}, \bm{\xi})$ for all $\lambda \in \mathbb{R}_{+}$, which proves the first inequality.
    To prove the second inequality, note that if $\bm{y}$ is feasible in $\mathcal{Q}_\mathcal{I}(\bm{x}, \bm{\xi})$, then it is clearly also feasible in $\mathcal{L}_\mathcal{I}(\bm{x}, \bm{\xi}, \lambda)$ with the same objective value due to the third part of Lemma~\ref{lemma:penalty_function_properties}.
\end{proof}

Our first main result establishes that the Lagrangian functions $\mathcal{L}$ and $\mathcal{L}_\mathcal{I}$ satisfy a strong duality property, even though the second-stage feasible region $\mathcal{Y}$ may be a non-convex MILP representable set.
\begin{theorem}[Strong duality]\label{theorem:strong_duality}
    For any $\bm{x} \in \mathcal{X}$ and $\bm{\xi} \in \Xi$, we have:
    \begin{gather}
        \mathcal{Q}(\bm{x}, \bm{\xi}) = \sup_{\lambda \in \mathbb{R}_{+}} \mathcal{L}(\bm{x}, \bm{\xi}, \lambda)
        \label{eq:strong_duality_general}
        \\
        \mathcal{Q}_\mathcal{I}(\bm{x}, \bm{\xi}) = \sup_{\lambda \in \mathbb{R}_{+}} \mathcal{L}_\mathcal{I}(\bm{x}, \bm{\xi}, \lambda)
        \label{eq:strong_duality_indicator}
    \end{gather}
\end{theorem}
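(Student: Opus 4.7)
Weak duality has already been shown in Lemma~\ref{lemma:weak_duality}, so the task is to establish the reverse inequality $\mathcal{Q}(\bm{x}, \bm{\xi}) \leq \sup_{\lambda \geq 0} \mathcal{L}(\bm{x}, \bm{\xi}, \lambda)$, and its indicator analogue. The central structural observation enabling the argument is that, since $\bm{\xi}$ is binary, the penalty $\phi(\bm{z}, \bm{\xi})$ is an \emph{affine} function of $\bm{z}$ on $[0,1]^{n_p}$---explicitly $\phi(\bm{z}, \bm{\xi}) = (\one - 2\bm{\xi})^\top \bm{z} + \one^\top \bm{\xi}$---and, by Lemma~\ref{lemma:penalty_function_properties}(1), the scalar constraint $\phi(\bm{z}, \bm{\xi}) \leq 0$ together with $\bm{z} \in [0,1]^{n_p}$ is equivalent to $\bm{z} = \bm{\xi}$. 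Therefore $\mathcal{Q}(\bm{x}, \bm{\xi})$ can be rewritten as the minimization of $\bm{c}(\bm{\xi})^\top \bm{x} + \bm{d}(\bm{\xi})^\top \bm{y}$ over $\bm{y} \in \mathcal{Y}$, $\bm{z} \in [0,1]^{n_p}$, $\bm{T}\bm{x} + \bm{W}\bm{y} \geq \bm{h}(\bm{z})$, and the single scalar constraint $\phi(\bm{z}, \bm{\xi}) \leq 0$, with $\mathcal{L}$ precisely its Lagrangian relaxation of that scalar constraint. Lemma~\ref{lemma:penalty_function_properties}(3) gives an analogous reformulation for $\mathcal{Q}_\mathcal{I}$ and $\mathcal{L}_\mathcal{I}$.

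I would then split into two cases. In the \emph{feasible case} $\mathcal{Q}(\bm{x}, \bm{\xi}) < +\infty$, I would apply LP duality in a piecewise fashion over any MILP description of $\mathcal{Y}$. Fixing the integer variables, the Lagrangian becomes an LP in the continuous variables $(\bm{y}, \bm{z})$, whose LP dual can be written out explicitly. Eliminating the dual multiplier $\bm{\pi}$ for $\bm{z} \leq \bm{e}$ in closed form---namely $\pi_j = \max\{0, \lambda - (\bm{H}^\top \bm{\mu})_j\}$ when $\xi_j = 1$ and $\pi_j = 0$ when $\xi_j = 0$, valid for $\lambda$ exceeding $\max_j |(\bm{H}^\top \bm{\mu}^\star)_j|$ at an optimal LP dual $\bm{\mu}^\star$ of $\mathcal{Q}$ restricted to that integer piece---the Lagrangian dual objective reduces to the LP dual of $\mathcal{Q}$ on the same piece, which equals the primal value by LP strong duality. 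Minimizing over integer pieces and invoking monotonicity of $\mathcal{L}(\bm{x}, \bm{\xi}, \lambda)$ in $\lambda$ then yields $\sup_\lambda \mathcal{L} = \mathcal{Q}$.

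In the \emph{infeasible case} $\mathcal{Q}(\bm{x}, \bm{\xi}) = +\infty$, I would show that the projection set $Z = \{\bm{z} \in [0,1]^{n_p} : \exists\, \bm{y} \in \mathcal{Y},\, \bm{T}\bm{x} + \bm{W}\bm{y} \geq \bm{h}(\bm{z})\}$ is closed (via Fourier--Motzkin elimination on each polyhedral piece of $\mathcal{Y}$) and does not contain $\bm{\xi}$, so $\phi(\bm{z}, \bm{\xi}) \geq \delta > 0$ uniformly on $Z$. Every feasible point of $\mathcal{L}(\bm{x}, \bm{\xi}, \lambda)$ then has objective at least $\bm{c}(\bm{\xi})^\top \bm{x} + \inf_{\bm{y}' \in \mathcal{Y}} \bm{d}(\bm{\xi})^\top \bm{y}' + \lambda \delta$, where the infimum is finite by \ref{assume:sufficiently_expensive_recourse_general}; taking $\lambda \to \infty$ gives $\sup_\lambda \mathcal{L} = +\infty = \mathcal{Q}$. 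The indicator case runs in exactly the same way, using that $\phi_\mathcal{I}$ is a non-negative affine function of $\bm{y}$ on $\{\bm{y} : \bm{g}(\bm{x}, \bm{y}) \geq \bm{0}\}$ that vanishes precisely on the feasible set of $\mathcal{Q}_\mathcal{I}(\bm{x}, \bm{\xi})$.

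The main obstacle I anticipate is rigorously handling the MILP structure of $\mathcal{Y}$. While LP duality gives a clean closed-form argument on each polyhedral piece, justifying the interchange of $\sup_\lambda$ with the outer $\min$ over (possibly infinitely many) integer pieces, and controlling minimizing sequences $\bm{y}_\lambda$ in a potentially unbounded $\mathcal{Y}$, will require combining monotonicity of $\mathcal{L}(\bm{x}, \bm{\xi}, \lambda)$ in $\lambda$ with the recourse assumptions \ref{assume:sufficiently_expensive_recourse_general}--\ref{assume:sufficiently_expensive_recourse_indicator} to secure uniform bounds.
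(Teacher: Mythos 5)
Your overall strategy---recognizing that the scalar constraint $\phi(\bm{z},\bm{\xi})\le 0$ together with $\bm{z}\in[0,1]^{n_p}$ is equivalent to $\bm{z}=\bm{\xi}$ (Lemma~\ref{lemma:penalty_function_properties}), so that $\mathcal{L}$ is a one-multiplier Lagrangian relaxation of $\mathcal{Q}$, then splitting on whether $\mathcal{Q}(\bm{x},\bm{\xi})$ is finite and driving $\lambda\to\infty$---is the same in spirit as the paper's, and your per-piece LP-duality calculation is sound on any fixed integer piece. The genuine gap is precisely the uniformity issue you flag at the end, and the remedy you propose (monotonicity of $\mathcal{L}$ in $\lambda$ plus \ref{assume:sufficiently_expensive_recourse_general}--\ref{assume:sufficiently_expensive_recourse_indicator}) does not close it. In the infeasible case, closedness of $Z$ ``via Fourier--Motzkin elimination on each polyhedral piece of $\mathcal{Y}$'' fails as stated: if $\mathcal{Y}$ has unbounded integer variables it is a union of infinitely many polyhedral pieces, and an infinite union of closed slices of $[0,1]^{n_p}$ need not be closed, so you do not obtain the uniform $\delta>0$. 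In the feasible case you need a single finite threshold on $\lambda$ valid simultaneously for all pieces: on pieces where the restricted $\mathcal{Q}$ is feasible you need a uniform bound on $\max_j|(\bm{H}^\top\bm{\mu}^\star)_j|$ over their dual optima, and on pieces where it is infeasible (but the relaxed piece is feasible) you need a uniform positive lower bound on $\phi$; the recourse assumptions only bound $\bm{d}(\bm{\xi})^\top\bm{y}$ from below and say nothing about dual vertex magnitudes or penalty values across infinitely many pieces.

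The missing ingredient is the finiteness device the paper uses. Since the feasible region of problem~\eqref{eq:lagrangian_general} is MILP representable and, under \ref{assume:sufficiently_expensive_recourse_general}, the objective is bounded below, $\mathcal{L}(\bm{x},\bm{\xi},\lambda)$ attains its value at one of the \emph{finitely many} extreme points $\mathcal{V}$ of that region, for every $\lambda\in\mathbb{R}_{+}$. Finiteness of $\mathcal{V}$ immediately gives a strictly positive $\hat{\phi}=\min\{\phi(\bm{z},\bm{\xi}):(\bm{y},\bm{z})\in\mathcal{V}\}$ in the infeasible case, and in the feasible case a finite $\bar{\lambda}$ beyond which every minimizer satisfies $\phi(\bm{z},\bm{\xi})=0$, whence Lemma~\ref{lemma:penalty_function_properties} makes the minimizers feasible in $\mathcal{Q}(\bm{x},\bm{\xi})$ and weak duality (Lemma~\ref{lemma:weak_duality}) finishes the proof. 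If you replace your closedness and per-piece threshold arguments with this single extreme-point (convex-hull polyhedrality) argument, your proof goes through; as written, those two steps do not.
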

\begin{proof}
    Denote the finite (possibly empty) set of extreme points of the feasible region of
    problem~\eqref{eq:lagrangian_general} %
    by 
    $
    \mathcal{V} = \mathop{\mathrm{ext}} \big\{
    (\bm{y}, \bm{z}) \in \mathcal{Y}\times [0, 1]^{n_p}:
    \bm{T}\bm{x} + \bm{W} \bm{y} \geq \bm{h}(\bm{z})
    \big\}
    $.
    Under assumption~\ref{assume:sufficiently_expensive_recourse_general}, problem~\eqref{eq:lagrangian_general} is bounded from below and therefore, it must attain its optimal value at an extreme point:
    \begin{align}\label{eq:temp:strong_duality}
        \mathcal{L}(\bm{x}, \bm{\xi}, \lambda) 
        &=
        \inf \left\{
        \bm{c}(\bm{\xi})^\top \bm{x}  + \bm{d}(\bm{\xi})^\top \bm{y} + \lambda \phi(\bm{z}, \bm{\xi})
        :
        (\bm{y}, \bm{z}) \in \mathcal{V}
        \right\}, \;\; \forall \lambda \in \mathbb{R}_{+}.
    \end{align}

    First, suppose that $\mathcal{Q}(\bm{x}, \bm{\xi}) = +\infty$.
    If $\mathcal{V} = \emptyset$, then $\mathcal{L}(\bm{x}, \bm{\xi}, \lambda) = +\infty$ as well and we are done;
    therefore, assume $\mathcal{V} \neq \emptyset$.
    Observe that we must have
    $\phi(\bm{z}, \bm{\xi}) > 0$ for all $(\bm{y}, \bm{z}) \in \mathcal{V}$; indeed, if $\phi(\bm{z}, \bm{\xi}) = 0$, then the second statement of Lemma~\ref{lemma:penalty_function_properties} implies that $\bm{y}$ is feasible in problem $\mathcal{Q}(\bm{x}, \bm{\xi})$, contradicting $\mathcal{Q}(\bm{x}, \bm{\xi}) = +\infty$.
    Therefore, $\hat{\phi} \coloneqq \min\{\phi(\bm{z}, \bm{\xi}) : (\bm{y}, \bm{z}) \in \mathcal{V}\} > 0$ and equation \eqref{eq:temp:strong_duality} implies:
    \begin{equation}\label{eq:temp:2:strong_duality}
    \mathcal{L}(\bm{x}, \bm{\xi}, \lambda) 
    \geq
    \min \left\{
    \bm{c}(\bm{\xi})^\top \bm{x}  + \bm{d}(\bm{\xi})^\top \bm{y}
    :
    (\bm{y}, \bm{z}) \in \mathcal{V}
    \right\}
    + \lambda \hat{\phi} \;\; \forall \lambda \in \mathbb{R}_{+}.
    \end{equation}
    The right-hand side of the above inequality goes to $+\infty$ as $\lambda \to +\infty$ and therefore, equation~\eqref{eq:strong_duality_general} is proved in the case when $\mathcal{Q}(\bm{x}, \bm{\xi}) = +\infty$.
    
    Now, suppose that $\mathcal{Q}(\bm{x}, \bm{\xi}) < +\infty$.
    Then, Lemma~\ref{lemma:weak_duality} implies $\mathcal{L}(\bm{x}, \bm{\xi}, \lambda) < +\infty$ for all $\lambda \in \mathbb{R}_{+}$.
    Therefore, equation~\eqref{eq:temp:strong_duality} implies that $\mathcal{V} \neq \emptyset$ and
    we can define
    $
    \hat{\mathcal{V}}(\lambda) = \mathop{\arg\min}_{(\bm{y}, \bm{z}) \in \mathcal{V}} \big\{
    \bm{c}(\bm{\xi})^\top \bm{x}  + \bm{d}(\bm{\xi})^\top \bm{y} + \lambda \phi(\bm{z}, \bm{\xi})
    \big\}
    $
    to be the set of extreme points that achieve the minimum on the right-hand side of~\eqref{eq:temp:strong_duality}.
    
    We now claim that there exists some finite $\bar{\lambda} > 0$ such that
    for all $\lambda \geq \bar{\lambda}$,
    we have
    $\phi(\bm{z}, \bm{\xi}) = 0$
    for all $(\bm{y}, \bm{z}) \in \hat{\mathcal{V}}(\lambda)$.
    Suppose this was not the case; that is,
    for all $\lambda \in \mathbb{R}_{+}$,
    there exists some 
    $(\hat{\bm{y}}, \hat{\bm{z}}) \in \hat{\mathcal{V}}(\lambda)$
    such that $\phi(\hat{\bm{z}}, \bm{\xi}) > 0$.
    Then, equation~\eqref{eq:temp:strong_duality} implies that
    \[
    \mathcal{L}(\bm{x}, \bm{\xi}, \lambda) 
    \geq
    \min \left\{
    \bm{c}(\bm{\xi})^\top \bm{x}  + \bm{d}(\bm{\xi})^\top \bm{y}
    :
    (\bm{y}, \bm{z}) \in \mathcal{V}
    \right\}
    + \lambda \phi(\hat{\bm{z}}, \bm{\xi}) \;\; \forall \lambda \in \mathbb{R}_{+}.
    \]
    The right-hand side of this inequality goes to $+\infty$ as $\lambda \to +\infty$ and therefore, it implies that $\sup_{\lambda \in \mathbb{R}_{+}} \mathcal{L}(\bm{x}, \bm{\xi}, \lambda) = +\infty$. But this contradicts $\mathcal{L}(\bm{x}, \bm{\xi}, \lambda) < +\infty$.
    Therefore, we have established that for all $\lambda \geq \bar{\lambda}$,
    we have
    $\phi(\bm{z}, \bm{\xi}) = 0$
    for all $(\bm{y}, \bm{z}) \in \hat{\mathcal{V}}(\lambda)$.
    The second part of Lemma~\ref{lemma:penalty_function_properties} then implies that $\bm{y}$ is feasible in problem $\mathcal{Q}(\bm{x}, \bm{\xi})$ for all $(\bm{y}, \bm{z}) \in \hat{\mathcal{V}}(\lambda)$.
    Taken together with equation~\eqref{eq:temp:strong_duality}, these statements imply that:
    \begin{align*}
        \mathcal{L}(\bm{x}, \bm{\xi}, \lambda) 
        &=
        \min \left\{
        \bm{c}(\bm{\xi})^\top \bm{x}  + \bm{d}(\bm{\xi})^\top \bm{y} + \lambda \phi(\bm{z}, \bm{\xi})
        :
        (\bm{y}, \bm{z}) \in \mathcal{V}
        \right\} \;\; \forall \lambda \geq \bar{\lambda}
        \\
        &=
        \min \left\{
        \bm{c}(\bm{\xi})^\top \bm{x}  + \bm{d}(\bm{\xi})^\top \bm{y}
        :
        (\bm{y}, \bm{z}) \in \hat{\mathcal{V}}(\lambda)
        \right\} \;\; \forall \lambda \geq \bar{\lambda}
        \\
        &\geq
        \mathcal{Q}(\bm{x}, \bm{\xi}).
    \end{align*}
    Together with Lemma~\ref{lemma:weak_duality}, this proves \eqref{eq:strong_duality_general} when $\mathcal{Q}(\bm{x}, \bm{\xi}) < +\infty$ as well.
    
    Equation~\eqref{eq:strong_duality_indicator} can be proved using exactly the same arguments, by replacing $\mathcal{V}$ with $\mathcal{V}_\mathcal{I}$ and $\phi$ with $\phi_\mathcal{I}$. We omit details in the interest of brevity.
\end{proof}

Our second main result establishes an even stronger duality property than Theorem~\ref{theorem:strong_duality}.
Whereas the latter shows that the maximization over $\lambda$ can be moved outside the inner minimization over the second-stage decisions, the following theorem shows that it can even be moved outside the minimization over the first-stage decisions, without sacrificing optimality.
This will turn out to be crucial for ensuring the validity of the lower bounds and overall correctness of the algorithms proposed in Section~\ref{sec:algorithm}.
\begin{theorem}[Strong duality for the two-stage problem]\label{theorem:strong_duality_two_stage_problem}
    \begin{gather}
        \inf_{\bm{x} \in \mathcal{X}} \sup_{\bm{\xi} \in \Xi} \mathcal{Q}(\bm{x}, \bm{\xi}) = \sup_{\lambda \in \mathbb{R}_{+}} \inf_{\bm{x} \in \mathcal{X}} \sup_{\bm{\xi} \in \Xi} \mathcal{L}(\bm{x}, \bm{\xi}, \lambda)
        \label{eq:strong_duality_general_two_stage}
        \\
        \inf_{\bm{x} \in \mathcal{X}} \sup_{\bm{\xi} \in \Xi} \mathcal{Q}_\mathcal{I}(\bm{x}, \bm{\xi}) = \sup_{\lambda \in \mathbb{R}_{+}} \inf_{\bm{x} \in \mathcal{X}} \sup_{\bm{\xi} \in \Xi} \mathcal{L}_\mathcal{I}(\bm{x}, \bm{\xi}, \lambda)
        \label{eq:strong_duality_indicator_two_stage}
    \end{gather}
\end{theorem}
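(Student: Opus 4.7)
The plan is to prove both directions of \eqref{eq:strong_duality_general_two_stage}; equation \eqref{eq:strong_duality_indicator_two_stage} will then follow from the same argument with $\mathcal{L}_\mathcal{I}, \phi_\mathcal{I}$ in place of $\mathcal{L}, \phi$. The $\geq$ direction is immediate from Lemma~\ref{lemma:weak_duality}: since $\mathcal{L}(\bm{x},\bm{\xi},\lambda) \leq \mathcal{Q}(\bm{x},\bm{\xi})$ holds pointwise, successively applying $\sup_{\bm{\xi}}$, $\inf_{\bm{x}}$, and $\sup_\lambda$ preserves the inequality.

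For the harder $\leq$ direction, I would first invoke Theorem~\ref{theorem:strong_duality} pointwise together with the trivial commutation of two suprema (using finiteness of $\Xi$) to rewrite
\[
\inf_{\bm{x}} \sup_{\bm{\xi}} \mathcal{Q}(\bm{x}, \bm{\xi}) \;=\; \inf_{\bm{x} \in \mathcal{X}} \sup_{\lambda \geq 0} F(\bm{x}, \lambda),
\]
where $F(\bm{x}, \lambda) := \max_{\bm{\xi} \in \Xi} \mathcal{L}(\bm{x}, \bm{\xi}, \lambda)$. Since $\phi \geq 0$, both $\mathcal{L}(\bm{x}, \bm{\xi}, \cdot)$ and hence $F(\bm{x}, \cdot)$ are non-decreasing in $\lambda$. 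The task therefore reduces to the nontrivial minimax inequality
\[
\inf_{\bm{x} \in \mathcal{X}} \sup_{\lambda \geq 0} F(\bm{x}, \lambda) \;\leq\; \sup_{\lambda \geq 0} \inf_{\bm{x} \in \mathcal{X}} F(\bm{x}, \lambda),
\]
whose reverse is standard weak duality.

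To establish this, denote the right-hand side by $C$ and assume $C < +\infty$ (otherwise the inequality is trivial). I would pick $\lambda_k \to \infty$ along with near-minimizers $\bm{x}_k \in \mathcal{X}$ satisfying $F(\bm{x}_k, \lambda_k) \leq C + 1/k$, and then extract a convergent subsequence $\bm{x}_k \to \bm{x}^\star \in \mathcal{X}$ using compactness of $\mathcal{X}$. For any fixed $\lambda_0 \geq 0$, monotonicity of $F$ in $\lambda$ gives $F(\bm{x}_k, \lambda_0) \leq F(\bm{x}_k, \lambda_k) \leq C + 1/k$ for all $k$ with $\lambda_k \geq \lambda_0$. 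Lower semicontinuity of $F(\cdot, \lambda_0)$ at $\bm{x}^\star$ would then yield $F(\bm{x}^\star, \lambda_0) \leq \liminf_k F(\bm{x}_k, \lambda_0) \leq C$; supremizing over $\lambda_0$ and using the trivial bound $\inf_{\bm{x}} \sup_\lambda F \leq \sup_\lambda F(\bm{x}^\star, \lambda)$ will close the argument.

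The main obstacle is the lower semicontinuity claim on $F(\cdot, \lambda_0)$, which reduces to LSC of each $\mathcal{L}(\cdot, \bm{\xi}, \lambda_0)$ since $\Xi$ is finite and finite maxima preserve LSC. This is the standard statement that the value function of a minimization MILP is LSC in its right-hand-side parameters. I would establish it via a Bolzano--Weierstrass-type argument: given $\bm{x}_k \to \bm{x}^\star$ with the sequence $\mathcal{L}(\bm{x}_k, \bm{\xi}, \lambda_0)$ bounded above, exploit compactness of $\bm{z} \in [0, 1]^{n_p}$, assumption~\ref{assume:sufficiently_expensive_recourse_general} to rule out escape of $\bm{y}$ to infinity, and the finite number of integer configurations in $\mathcal{Y}$ to extract a convergent subsequence of optimal $(\bm{y}_k, \bm{z}_k)$ whose limit is feasible at $\bm{x}^\star$ with objective value no greater than $\liminf_k \mathcal{L}(\bm{x}_k, \bm{\xi}, \lambda_0)$.
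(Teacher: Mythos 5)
Your overall skeleton is sound and genuinely different from the paper's argument: the weak direction, the rewriting via Theorem~\ref{theorem:strong_duality}, the choice of near-minimizers $\bm{x}_k$ at $\lambda_k \to \infty$, compactness of $\mathcal{X}$, and monotonicity of $F(\bm{x},\cdot)$ (Lemma~\ref{lemma:wc_lagrangian_function_properties}) are all fine, and your scheme even absorbs the infeasible case ($C=+\infty$) without the separate case analysis the paper performs. The paper instead sidesteps all continuity questions: it argues that the multipliers $\bar{\lambda}(\bm{x},\bm{\xi})$ from Theorem~\ref{theorem:strong_duality} admit a single finite uniform value $\tilde{\lambda}$ over $\mathcal{X}\times\Xi$, evaluates the chain $\inf_{\bm{x}}\sup_{\bm{\xi}}\sup_{\lambda}\mathcal{L}$ at $\lambda=\tilde{\lambda}$, and closes with the max--min inequality.

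The genuine gap in your proposal is the last step, the lower semicontinuity of $\bm{x} \mapsto \mathcal{L}(\bm{x},\bm{\xi},\lambda_0)$, which is exactly the load-bearing lemma and whose sketched justification does not hold under the paper's assumptions. First, $\mathcal{Y}$ is only assumed MILP-representable, not bounded, so there is no ``finite number of integer configurations'' to enumerate. Second, assumption~\ref{assume:sufficiently_expensive_recourse_general} only bounds the objective from below; it does not prevent optimal (or near-optimal) sequences $(\bm{y}_k,\bm{z}_k)$ from escaping to infinity along recession directions of the feasible region on which $\bm{d}(\bm{\xi})^\top\bm{y}$ is constant, so Bolzano--Weierstrass cannot be applied to the minimizers directly, and the closed-graph argument for feasibility of the limit collapses. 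Lower semicontinuity of a mixed-integer value function in its right-hand side is true but is itself a nontrivial theorem (Meyer/Blair--Jeroslow-type results, requiring rationality of the data or a decomposition of $\mathcal{Y}$ into finitely many polytopes plus an integer recession cone, with recession directions handled via $\bm{d}(\bm{\xi})^\top\bm{r}\ge 0$); as written, your proof assumes what still needs to be proved. To repair it, either establish that lemma properly (e.g., via the Jeroslow--Lowe representation of $\mathcal{Y}$ and reduction to finitely many parametric LP value functions after trimming recession components), or avoid it altogether by exhibiting, as the paper does, one finite $\tilde{\lambda}$ that maximizes $\mathcal{L}(\bm{x},\bm{\xi},\cdot)$ simultaneously for all $(\bm{x},\bm{\xi})$ and plugging it into the max--min chain.
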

\begin{proof}
    Suppose first that \ref{eq:two_stage_ro_general} is infeasible, so that the left-hand side of \eqref{eq:strong_duality_general_two_stage} is equal to $+\infty$.
    In other words, for all $\bm{x} \in \mathcal{X}$, there exists some $\bm{\xi} \in \Xi$ such that $\mathcal{Q}(\bm{x}, \bm{\xi}) = +\infty$.
    Using the same argument preceding equation \eqref{eq:temp:2:strong_duality} in the proof of Theorem~\ref{theorem:strong_duality}, we can define
    $
    \mathcal{V}(\bm{x}) \coloneqq \mathop{\mathrm{ext}} \big\{
    (\bm{y}, \bm{z}) \in \mathcal{Y}\times [0, 1]^{n_p}:
    \bm{T}\bm{x} + \bm{W} \bm{y} \geq \bm{h}(\bm{z})
    \big\}
    $ %
    and
    $\hat{\phi}(\bm{x}, \bm{\xi}) \coloneqq \inf \{\phi(\bm{z}, \bm{\xi}) : (\bm{y}, \bm{z}) \in \mathcal{V}(\bm{x})\} > 0$.
    Unlike in the proof of Theorem~\ref{theorem:strong_duality}, we make the dependence on $\bm{x}$ and $\bm{\xi}$ explicit.
    Observe that $\tilde{\phi} \coloneqq \inf_{\bm{x} \in \mathcal{X}} \sup_{\bm{\xi} \in \Xi}  \hat{\phi}(\bm{x}, \bm{\xi})$ is strictly positive since for all $\bm{\xi} \in \Xi$,
    $
    \hat{\phi}({\bm{x}}, \bm{\xi}) > 0
    $
    and finite
    if $\mathcal{V}(\bm{x}) \neq \emptyset$
    or 
    $
    \hat{\phi}({\bm{x}}, \bm{\xi}) = +\infty
    $
    otherwise.
    Now taking $\sup$ over $\bm{\xi}$ and then $\inf$ over $\bm{x}$ in equation~\eqref{eq:temp:2:strong_duality}, we have for all $\lambda \in \mathbb{R}_{+}$ that:
    \[
    \inf_{\bm{x} \in \mathcal{X}} \sup_{\bm{\xi} \in \Xi} \mathcal{L}(\bm{x}, \bm{\xi}, \lambda) 
    \geq
    \inf_{\bm{x} \in \mathcal{X}} \sup_{\bm{\xi} \in \Xi} \inf \left\{
    \bm{c}(\bm{\xi})^\top \bm{x}  + \bm{d}(\bm{\xi})^\top \bm{y}
    :
    (\bm{y}, \bm{z}) \in \mathcal{V}(\bm{x})
    \right\}
    + \lambda \tilde{\phi}.
    \]
    The first term in the right-hand side of the above inequality is equal to $+\infty$ if $\mathcal{V}(\bm{x}) = \emptyset$ for all $\bm{x} \in \mathcal{X}$; otherwise, it is finite, since $\mathcal{X}$ and $\Xi$ are compact.
    In either case, the right-hand side goes to $+\infty$ as $\lambda \to \infty$.
    
    Suppose now that \ref{eq:two_stage_ro_general} is feasible, so that the left-hand side of \eqref{eq:strong_duality_general_two_stage} is finite.
    It suffices to take the minimization over decisions $\bm{x} \in \mathcal{X}$ for which $\sup_{\bm{\xi} \in \Xi} \mathcal{Q}(\bm{x}, \bm{\xi}) < + \infty$.
    In this case, Lemma~\ref{lemma:weak_duality} implies that
    $\sup_{\bm{\xi} \in \Xi} \mathcal{L}(\bm{x}, \bm{\xi}, \lambda) < + \infty$ for all $\lambda \geq 0$, whereas the proof of Theorem~\ref{theorem:strong_duality} establishes that there exists some finite $\bar{\lambda}(\bm{x}, \bm{\xi}) > 0$ (we again make the dependence on $\bm{x}$ and $\bm{\xi}$ explicit here)
    such that for all feasible $\bm{x} \in \mathcal{X}$ and $\bm{\xi} \in \Xi$, the function $\mathcal{L}(\bm{x}, \bm{\xi}, \cdot)$ is maximized by any $\lambda \geq \bar{\lambda}(\bm{x}, \bm{\xi})$.
    By defining $\tilde{\lambda} \coloneqq \max\{\bar{\lambda}(\bm{x}, \bm{\xi}) : \bm{x} \in \mathcal{X}, \bm{\xi} \in \Xi\}$ which is finite since $\mathcal{X}$ and $\Xi$ are compact, we then note that $\tilde{\lambda}$ maximizes the function $\mathcal{L}(\bm{x}, \bm{\xi}, \cdot)$ for all $\bm{x}$ and $\bm{\xi}$.
    In other words, we have:
    \begin{align*}
        \inf_{\bm{x} \in \mathcal{X}} \sup_{\bm{\xi} \in \Xi} \sup_{\lambda \in \mathbb{R}_{+}} \mathcal{L}(\bm{x}, \bm{\xi}, \lambda)
        =  \inf_{\bm{x} \in \mathcal{X}} \sup_{\bm{\xi} \in \Xi} \mathcal{L}(\bm{x}, \bm{\xi}, \tilde{\lambda})
        \leq \sup_{\lambda \in \mathbb{R}_{+}} \inf_{\bm{x} \in \mathcal{X}} \sup_{\bm{\xi} \in \Xi} \mathcal{L}(\bm{x}, \bm{\xi}, \lambda),
    \end{align*}
    where the inequality follows by treating the middle expression as a function that is evaluated at $\lambda = \tilde{\lambda}$ and the right-most expression as maximizing this function over all $\lambda \in \mathbb{R}_{+}$.
    The max-min inequality, however, implies that:
    \begin{align*}
        \inf_{\bm{x} \in \mathcal{X}} \sup_{\bm{\xi} \in \Xi} \sup_{\lambda \in \mathbb{R}_{+}} \mathcal{L}(\bm{x}, \bm{\xi}, \lambda)
        =  \inf_{\bm{x} \in \mathcal{X}} \sup_{\lambda \in \mathbb{R}_{+}} \sup_{\bm{\xi} \in \Xi} \mathcal{L}(\bm{x}, \bm{\xi}, \lambda)
        \geq  \sup_{\lambda \in \mathbb{R}_{+}} \inf_{\bm{x} \in \mathcal{X}} \sup_{\bm{\xi} \in \Xi} \mathcal{L}(\bm{x}, \bm{\xi}, \lambda).
    \end{align*}
    Combining the previous two inequalities with Theorem~\ref{theorem:strong_duality}, we obtain:
    \begin{align*}
        \inf_{\bm{x} \in \mathcal{X}} \sup_{\bm{\xi} \in \Xi} \mathcal{Q}(\bm{x}, \bm{\xi})
        =
        \inf_{\bm{x} \in \mathcal{X}} \sup_{\bm{\xi} \in \Xi} \sup_{\lambda \in \mathbb{R}_{+}} \mathcal{L}(\bm{x}, \bm{\xi}, \lambda)
        = 
        \sup_{\lambda \in \mathbb{R}_{+}} \inf_{\bm{x} \in \mathcal{X}} \sup_{\bm{\xi} \in \Xi} \mathcal{L}(\bm{x}, \bm{\xi}, \lambda),
    \end{align*}
    which proves equation~\eqref{eq:strong_duality_general_two_stage}.
    
    As in the proof of Theorem~\ref{theorem:strong_duality}, equation~\eqref{eq:strong_duality_indicator} can be proved using the exact same arguments, and we again omit details for the sake of brevity. 
\end{proof}

\subsection{Optimal Lagrange multipliers}

Theorems~\ref{theorem:strong_duality} and~\ref{theorem:strong_duality_two_stage_problem} reduce the computation of a worst-case parameter realization from optimizing over the second-stage value function
to optimizing over the Lagrangian dual function.
We now characterize the structure of an optimal Lagrange multiplier for \ref{eq:two_stage_ro_general} and \ref{eq:two_stage_ro_indicator}. %
Before that, however, we elucidate a useful property of the worst-case Lagrangian functions.
\begin{lemma}[Properties of the worst-case Lagrangian functions]\label{lemma:wc_lagrangian_function_properties}
    For any $\bm{x} \in \mathcal{X}$, the functions
    $
    \sup \{ \mathcal{L}(\bm{x}, \bm{\xi}, \cdot) : \bm{\xi} \in \Xi \}
    $
    and
    $
    \sup \{ \mathcal{L}_\mathcal{I}(\bm{x}, \bm{\xi}, \cdot) : \bm{\xi} \in \Xi \}
    $
    are piecewise linear, continuous, and monotonically non-decreasing in $\lambda \in \mathbb{R}_{+}$.
\end{lemma}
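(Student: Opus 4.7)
The plan is to reduce $\sup_{\bm{\xi} \in \Xi} \mathcal{L}(\bm{x}, \bm{\xi}, \cdot)$ to a finite maximum of finite minima of affine functions of $\lambda$, and then invoke elementary closure properties of such functions. First I would fix $\bm{x} \in \mathcal{X}$ and $\bm{\xi} \in \Xi$ and observe that the feasible region of problem~\eqref{eq:lagrangian_general} is MILP representable and independent of $\lambda$, while its objective is affine in $(\bm{y},\bm{z})$ for each fixed $\lambda$. Under assumption~\ref{assume:sufficiently_expensive_recourse_general}, the extreme-point argument used in the proof of Theorem~\ref{theorem:strong_duality} (with $\mathcal{V}$ as defined therein) applies and yields
\[
\mathcal{L}(\bm{x}, \bm{\xi}, \lambda)
= \min_{(\bm{y}, \bm{z}) \in \mathcal{V}} \bigl\{\bm{c}(\bm{\xi})^\top \bm{x} + \bm{d}(\bm{\xi})^\top \bm{y} + \lambda\,\phi(\bm{z}, \bm{\xi}) \bigr\},
\qquad \lambda \in \mathbb{R}_{+},
\]
with the convention that a minimum over an empty $\mathcal{V}$ equals $+\infty$. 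For each $(\bm{y},\bm{z}) \in \mathcal{V}$ the summand is affine in $\lambda$ with slope $\phi(\bm{z}, \bm{\xi}) \geq 0$ by part~1 of Lemma~\ref{lemma:penalty_function_properties}, since feasibility in~\eqref{eq:lagrangian_general} forces $\bm{z} \in [0,1]^{n_p}$. Consequently $\mathcal{L}(\bm{x}, \bm{\xi}, \cdot)$ is a pointwise minimum of finitely many affine functions with non-negative slopes, and is therefore piecewise linear, continuous, and monotonically non-decreasing on $\mathbb{R}_{+}$.

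Next I would use the fact that $\Xi \subseteq \{0,1\}^{n_p}$ is finite to conclude that $\sup_{\bm{\xi}\in\Xi} \mathcal{L}(\bm{x},\bm{\xi},\cdot)$ is actually a maximum over a finite family of functions, each of which already enjoys the three desired properties. Since piecewise linearity, continuity, and monotonicity in $\lambda$ are each preserved under finite pointwise maxima, the claim for $\mathcal{L}$ follows immediately.

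Finally, the argument for $\mathcal{L}_\mathcal{I}$ is entirely analogous: the feasible set $\{\bm{y} \in \mathcal{Y} : \bm{g}(\bm{x},\bm{y}) \geq \bm{0}\}$ depends only on $\bm{x}$, the objective is affine in $\bm{y}$ for fixed $(\bm{\xi},\lambda)$, and assumption~\ref{assume:sufficiently_expensive_recourse_indicator} enables the same extreme-point reduction; the required non-negativity of the coefficient of $\lambda$ is now supplied by part~3 of Lemma~\ref{lemma:penalty_function_properties}, which gives $\phi_\mathcal{I}(\bm{x},\bm{y},\bm{\xi}) \geq 0$ at every feasible (in particular, every extreme) $\bm{y}$. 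The main obstacle, such as it is, is verifying that the extreme-point reduction in the pointwise minimum is legitimate in the MILP setting; this has already been carried out in the proof of Theorem~\ref{theorem:strong_duality} and so requires no new work here.
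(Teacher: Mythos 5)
Your proposal is correct and follows essentially the same route as the paper: both reduce $\mathcal{L}(\bm{x},\bm{\xi},\cdot)$ (resp.\ $\mathcal{L}_\mathcal{I}$) to a minimum of finitely many affine functions of $\lambda$ via extreme-point attainment of problem~\eqref{eq:lagrangian_general}, take the maximum over the finite set $\Xi$ to get piecewise linearity and continuity, and obtain monotonicity from the non-negativity of the penalty functions established in Lemma~\ref{lemma:penalty_function_properties}. The only cosmetic difference is that the paper argues monotonicity by a direct pointwise comparison of objective values at $\lambda_1 \leq \lambda_2$ rather than via the slopes of the affine pieces, which is an equivalent observation.
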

\begin{proof}
    Observe that
    \textit{(i)} the objective function of
    $\mathcal{L}(\bm{x}, \bm{\xi}, \lambda)$
    is affine in $\lambda$;
    \textit{(ii)} its optimal solution
    is always attained at any of the finite number of extreme points
    of its feasible region in problem~\eqref{eq:lagrangian_general};
    and therefore,
    \textit{(iii)} the function
    $\sup \{ \mathcal{L}(\bm{x}, \bm{\xi}, \cdot) : \bm{\xi} \in \Xi \}$ is the supremum over a finite set ($\bm{\xi} \in \Xi$) of the infimum of a finite number of affine functions.
    This shows that it is piecewise linear and continuous.

    Its non-decreasing nature follows directly from the non-negativity of the slopes of the linear pieces; that is, of the penalty functions $\phi$ and $\phi_\mathcal{I}$ that was established in Lemma~\ref{lemma:penalty_function_properties}.
    Specifically, consider any $\lambda_1, \lambda_2 \in \mathbb{R}$ such that $\lambda_1 \leq \lambda_2$.
    Then, for any $\bm{y} \in \mathcal{Y}$, $\bm{z} \in [0, 1]^{n_p}$ and $\bm{\xi} \in \Xi$,
    we have
    $
    \bm{c}(\bm{\xi})^\top \bm{x}  + \bm{d}(\bm{\xi})^\top \bm{y} + \lambda_1 \phi(\bm{z}, \bm{\xi})
    \leq 
    \bm{c}(\bm{\xi})^\top \bm{x}  + \bm{d}(\bm{\xi})^\top \bm{y} + \lambda_2 \phi(\bm{z}, \bm{\xi}).
    $
    Therefore, $\sup_{\bm{\xi} \in \Xi} \mathcal{L}(\bm{x}, \bm{\xi}, \lambda_1) \leq \sup_{\bm{\xi} \in \Xi} \mathcal{L}(\bm{x}, \bm{\xi}, \lambda_2)$ implying that it is non-decreasing in $\lambda$.
    The argument for $\sup \{ \mathcal{L}_\mathcal{I}(\bm{x}, \bm{\xi}, \cdot) : \bm{\xi} \in \Xi \}$ is similar.
\end{proof}

A necessary condition for the optimality of a Lagrange multiplier directly falls out of Lemma~\ref{lemma:wc_lagrangian_function_properties}, which we present next.
\begin{theorem}[Necessary conditions for optimality of multipliers]\label{theorem:necessary_optimality_condition}
    \begin{enumerate}
        \item For any %
        $\bm{x} \in \mathcal{X}$ %
        such that
        $\mathcal{Q}(\bm{x}, \bm{\xi})$ is finite for all $\bm{\xi} \in \Xi$,
        we have that
        \begin{equation*}
            \bar{\lambda}
            \in
            \mathop{\arg\max}_{\lambda \in \mathbb{R}_{+}} \left\{
            \sup_{\bm{\xi} \in \Xi} \mathcal{L}(\bm{x}, \bm{\xi}, \lambda)
            \right\}
        \end{equation*}
        only if there exists
        $\bar{\bm{\xi}} \in \arg\max_{\bm{\xi} \in \Xi} \mathcal{L}(\bm{x}, \bm{\xi}, \bar{\lambda})$
        and
        an optimal solution $(\bar{\bm{y}}, \bar{\bm{z}})$ of
        $\mathcal{L}(\bm{x}, \bar{\bm{\xi}}, \bar{\lambda})$
        such that
        $\phi(\bar{\bm{z}}, \bar{\bm{\xi}}) = 0$.
        
        \item For any %
        $\bm{x} \in \mathcal{X}$ %
        such that
        $\mathcal{Q}_\mathcal{I}(\bm{x}, \bm{\xi})$ is finite for all $\bm{\xi} \in \Xi$,
        we have that
        \begin{equation*}
            \bar{\lambda}
            \in
            \mathop{\arg\max}_{\lambda \in \mathbb{R}_{+}} \left\{
            \sup_{\bm{\xi} \in \Xi} \mathcal{L}_\mathcal{I}(\bm{x}, \bm{\xi}, \lambda)
            \right\}
        \end{equation*}
        only if there exists
        $\bar{\bm{\xi}} \in \arg\max_{\bm{\xi} \in \Xi} \mathcal{L}_\mathcal{I}(\bm{x}, \bm{\xi}, \bar{\lambda})$
        and
        an optimal solution $\bar{\bm{y}}$ of
        $\mathcal{L}_\mathcal{I}(\bm{x}, \bar{\bm{\xi}}, \bar{\lambda})$
        such that
        $\phi_\mathcal{I}(\bm{x}, \bar{\bm{y}}, \bar{\bm{\xi}}) = 0$.
    \end{enumerate}
\end{theorem}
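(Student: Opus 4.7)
The plan is to combine three ingredients: the pointwise strong duality of Theorem~\ref{theorem:strong_duality}, the monotonicity of $\lambda \mapsto \sup_{\bm{\xi} \in \Xi} \mathcal{L}(\bm{x}, \bm{\xi}, \lambda)$ established in Lemma~\ref{lemma:wc_lagrangian_function_properties}, and weak duality (Lemma~\ref{lemma:weak_duality}). I focus on part~(1); part~(2) will follow by the same template with $\phi$ replaced by $\phi_\mathcal{I}$, the auxiliary variable $\bm{z}$ dropped, and the third statement of Lemma~\ref{lemma:penalty_function_properties} used in place of the second.

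First I would introduce $f(\lambda) \coloneqq \sup_{\bm{\xi} \in \Xi} \mathcal{L}(\bm{x}, \bm{\xi}, \lambda)$ and swap the two suprema to obtain $\sup_{\lambda \in \mathbb{R}_+} f(\lambda) = \sup_{\bm{\xi} \in \Xi} \sup_{\lambda \in \mathbb{R}_+} \mathcal{L}(\bm{x}, \bm{\xi}, \lambda) = \sup_{\bm{\xi} \in \Xi} \mathcal{Q}(\bm{x}, \bm{\xi})$ by Theorem~\ref{theorem:strong_duality}, a quantity that is finite by the hypothesis and the finiteness of $\Xi$. Since $f$ is non-decreasing by Lemma~\ref{lemma:wc_lagrangian_function_properties} and $\bar{\lambda}$ is a maximizer, $f$ must be constant on $[\bar{\lambda}, \infty)$, yielding $f(\bar{\lambda}) = \sup_{\bm{\xi} \in \Xi} \mathcal{Q}(\bm{x}, \bm{\xi})$. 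Now, for any $\bar{\bm{\xi}} \in \arg\max_{\bm{\xi} \in \Xi} \mathcal{L}(\bm{x}, \bm{\xi}, \bar{\lambda})$, the inequality chain
$\mathcal{L}(\bm{x}, \bar{\bm{\xi}}, \bar{\lambda}) \leq \mathcal{Q}(\bm{x}, \bar{\bm{\xi}}) \leq \sup_{\bm{\xi} \in \Xi} \mathcal{Q}(\bm{x}, \bm{\xi}) = f(\bar{\lambda}) = \mathcal{L}(\bm{x}, \bar{\bm{\xi}}, \bar{\lambda})$
collapses to equalities, so $\mathcal{L}(\bm{x}, \bar{\bm{\xi}}, \bar{\lambda}) = \mathcal{Q}(\bm{x}, \bar{\bm{\xi}})$ and $\bar{\bm{\xi}}$ is in fact also a worst-case realization for the true value function.

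To produce the required $(\bar{\bm{y}}, \bar{\bm{z}})$, I would invoke the second half of the proof of Theorem~\ref{theorem:strong_duality}, applied to the now-fixed pair $(\bm{x}, \bar{\bm{\xi}})$: it exhibits a finite threshold $\lambda^\star > 0$ such that every optimizer of $\mathcal{L}(\bm{x}, \bar{\bm{\xi}}, \lambda^\star)$ satisfies $\phi = 0$. Picking any such optimizer $(\bm{y}^\star, \bm{z}^\star)$, I would then transfer it back to $\bar{\lambda}$. Because the feasible region of the inner problem is independent of the multiplier, $(\bm{y}^\star, \bm{z}^\star)$ is feasible for $\mathcal{L}(\bm{x}, \bar{\bm{\xi}}, \bar{\lambda})$, and its objective value there is $\bm{c}(\bar{\bm{\xi}})^\top \bm{x} + \bm{d}(\bar{\bm{\xi}})^\top \bm{y}^\star + \bar{\lambda} \cdot 0 = \mathcal{Q}(\bm{x}, \bar{\bm{\xi}}) = \mathcal{L}(\bm{x}, \bar{\bm{\xi}}, \bar{\lambda})$, so it attains the optimum. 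Setting $(\bar{\bm{y}}, \bar{\bm{z}}) = (\bm{y}^\star, \bm{z}^\star)$ delivers the conclusion of part~(1).

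The main obstacle I anticipate is this last transfer step: monotonicity pins down $f$ past $\bar{\lambda}$, but it does not a priori preclude the possibility that at $\bar{\lambda}$ itself every optimum of the single-$\bar{\bm{\xi}}$ subproblem has $\phi > 0$, with the contribution $\bar{\lambda}\phi$ exactly making up the difference between the partial objective and $\mathcal{Q}(\bm{x}, \bar{\bm{\xi}})$. The resolution is precisely the feasibility-and-objective argument above: a $\phi = 0$ extreme point that is optimal at any larger $\lambda^\star$ remains optimal at $\bar{\lambda}$ because its objective is multiplier-independent and already equals the common optimal value $\mathcal{Q}(\bm{x}, \bar{\bm{\xi}})$. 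This observation, together with the pointwise threshold from the proof of Theorem~\ref{theorem:strong_duality}, is what turns the necessary condition into a usable certificate.
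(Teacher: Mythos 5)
Your proposal is correct, but it reaches the conclusion by a genuinely different route than the paper. The paper argues by contradiction using the piecewise-linear structure from Lemma~\ref{lemma:wc_lagrangian_function_properties}: if every optimizer of $\mathcal{L}(\bm{x}, \bm{\xi}, \bar{\lambda})$ for every $\bm{\xi} \in \arg\max_{\bm{\xi} \in \Xi} \mathcal{L}(\bm{x}, \bm{\xi}, \bar{\lambda})$ had strictly positive penalty, then with $\hat{\phi} > 0$ the minimum such penalty, a small increase $\bar{\lambda} \to \bar{\lambda} + \epsilon$ would raise $\sup_{\bm{\xi} \in \Xi} \mathcal{L}(\bm{x}, \bm{\xi}, \cdot)$ by at least $\epsilon \hat{\phi}$, contradicting the maximality of $\bar{\lambda}$; only finiteness is drawn from Theorem~\ref{theorem:strong_duality}. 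You instead use Theorem~\ref{theorem:strong_duality} pointwise together with the exchange of suprema and monotonicity to get $f(\bar{\lambda}) = \sup_{\bm{\xi} \in \Xi} \mathcal{Q}(\bm{x}, \bm{\xi})$, then sandwich via weak duality to conclude $\mathcal{L}(\bm{x}, \bar{\bm{\xi}}, \bar{\lambda}) = \mathcal{Q}(\bm{x}, \bar{\bm{\xi}}) = \sup_{\bm{\xi} \in \Xi} \mathcal{Q}(\bm{x}, \bm{\xi})$, and finally exhibit a zero-penalty optimizer at $\bar{\lambda}$ by transferring an above-threshold extreme-point optimizer, whose objective $\bm{c}(\bar{\bm{\xi}})^\top \bm{x} + \bm{d}(\bar{\bm{\xi}})^\top \bm{y}^\star$ is multiplier-independent, back to the $\lambda$-independent feasible set. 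This is sound, and it actually buys slightly more than the stated theorem: the certificate exists for \emph{every} $\bar{\bm{\xi}} \in \arg\max_{\bm{\xi} \in \Xi} \mathcal{L}(\bm{x}, \bm{\xi}, \bar{\lambda})$, and every such $\bar{\bm{\xi}}$ is also a worst case for $\mathcal{Q}(\bm{x}, \cdot)$, whereas the paper's contradiction only yields existence of one such pair. What the paper's perturbation argument buys in exchange is self-containment: it does not re-enter the internal threshold construction of Theorem~\ref{theorem:strong_duality}'s proof, which your transfer step relies on. Note also that your transfer step could be streamlined: once $\mathcal{L}(\bm{x}, \bar{\bm{\xi}}, \bar{\lambda}) = \mathcal{Q}(\bm{x}, \bar{\bm{\xi}})$ is in hand, any optimal $\bm{y}$ of $\mathcal{Q}(\bm{x}, \bar{\bm{\xi}})$ paired with $\bm{z} = \bar{\bm{\xi}}$ is feasible in \eqref{eq:lagrangian_general} with zero penalty and the same objective value, so it is already the desired optimizer (your detour through $\lambda^\star$ simply secures attainment via the paper's extreme-point machinery, which is a legitimate reason to keep it).
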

\begin{proof}
    Suppose that the stated conditions hold and $\bar{\lambda}$ is an optimal multiplier.
    Theorem~\ref{theorem:strong_duality} implies that $\sup_{\bm{\xi} \in \Xi} \mathcal{L}(\bm{x}, {\bm{\xi}}, \bar{\lambda})$ must be finite.
    Therefore, denote 
    $\hat{\mathcal{U}} \coloneqq \arg\max_{\bm{\xi} \in \Xi} \mathcal{L}(\bm{x}, \bm{\xi}, \bar{\lambda})$
    and for each
    $
    \bm{\xi} \in \hat{\mathcal{U}}
    $,
    denote the set of minimizers of problem $\mathcal{L}(\bm{x}, \bm{\xi}, \bar{\lambda})$ as
    $
    \hat{\mathcal{V}}(\bm{\xi})
    $.
    Suppose for the sake of contradiction, that for all
    ${\bm{\xi}} \in \hat{\mathcal{U}}$
    and for all
    $({\bm{y}}, {\bm{z}}) \in \hat{\mathcal{V}}({\bm{\xi}})$
    we have that
    $\phi({\bm{z}}, {\bm{\xi}}) > 0$.
    Define $\hat{\phi} = \min\{\phi({\bm{z}}, {\bm{\xi}}) : (\bm{y}, \bm{z}) \in \hat{\mathcal{V}}(\bm{\xi}), \bm{\xi} \in \hat{\mathcal{U}}\} > 0$.
    Then, by construction:
    \begin{align*}
        \sup_{\bm{\xi} \in \Xi} \mathcal{L}(\bm{x}, {\bm{\xi}}, \bar{\lambda})
        =
        \max_{\bm{\xi} \in \hat{\mathcal{U}}}
        \min_{(\bm{y}, \bm{z}) \in \hat{\mathcal{V}}(\bm{\xi})}
        \{
        \bm{c}(\bm{\xi})^\top \bm{x}  + \bm{d}(\bm{\xi})^\top \bm{y}
        + 
        \bar{\lambda} \phi({\bm{z}}, {\bm{\xi}})
        \}
    \end{align*}
    Continuity of the function on the left-hand side (with respect to $\lambda$) implies that for some sufficiently small $\epsilon > 0$,
    $\max_{\bm{\xi} \in \Xi} \mathcal{L}(\bm{x}, {\bm{\xi}}, \bar{\lambda} + \epsilon) \geq \max_{\bm{\xi} \in \Xi} \mathcal{L}(\bm{x}, {\bm{\xi}}, \bar{\lambda}) + \epsilon \hat{\phi}$,
    contradicting the optimality of $\bar{\lambda}$.
    The proof of the second statement is similar and omitted for the sake of brevity. 
\end{proof}

Although Theorem~\ref{theorem:necessary_optimality_condition} is only a necessary condition for optimality, we shall show in the next section that we can still use it in conjunction with Theorem~\ref{theorem:strong_duality_two_stage_problem} to design an algorithm for computing the optimal multiplier.
For now, we close this section by providing a sufficient condition under which we can compute a closed-form expression for the optimal Lagrange multiplier in problem \ref{eq:two_stage_ro_general}.
\begin{theorem}[Closed-form expression for optimal Lagrange multiplier]\label{theorem:optimal_multiplier_general}
    Suppose that for every $\bm{x} \in \mathcal{X}$, either there exists $\bm{\xi} \in \Xi$ such that $\mathcal{Q}(\bm{x}, \bm{\xi}) = +\infty$ or $\mathcal{Q}(\bm{x}, \bm{\xi}) < +\infty$ for all $\bm{\xi} \in \{0, 1\}^{n_p}$.
    Then, for any feasible first-stage decision $\bm{x} \in \mathcal{X}$;
    that is,
    for which $\sup \big\{\mathcal{Q}(\bm{x}, \bm{\xi}) : \bm{\xi} \in \Xi \big\} < +\infty$,
    we have that
    \begin{equation*}
        u(\bm{x}) - \ell(\bm{x})
        \in
        \mathop{\arg\max}_{\lambda \in \mathbb{R}_{+}} \left\{
        \max_{\bm{\xi} \in \Xi} \mathcal{L}(\bm{x}, \bm{\xi}, \lambda)
        \right\},
    \end{equation*}
    where $u(\bm{x})$ is any finite upper bound on
    $\sup \big\{\mathcal{Q}(\bm{x}, \bm{\xi}) : \bm{\xi} \in \Xi \big\}$
    and  $\ell(\bm{x})$ is any finite lower bound on
    $\inf \big\{\bm{c}(\bm{\xi})^\top \bm{x} + \bm{d}(\bm{\xi})^\top \bm{y} : \bm{\xi} \in \Xi, \bm{y} \in \mathcal{Y} \big\}$.
\end{theorem}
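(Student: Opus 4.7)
My plan is to reduce the claim to a single strong-duality statement at one worst-case scenario, and then show that the explicit value $\lambda^{\star} := u(\bm{x}) - \ell(\bm{x})$ is already ``large enough.''

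I would first apply Theorem~\ref{theorem:strong_duality} scenario by scenario to write $\mathcal{Q}(\bm{x}, \bm{\xi}) = \sup_{\lambda \geq 0} \mathcal{L}(\bm{x}, \bm{\xi}, \lambda)$ and then take the supremum over $\bm{\xi} \in \Xi$ on both sides. Since the two suprema are independent, they commute, giving
\[
\sup_{\lambda \geq 0}\; \sup_{\bm{\xi} \in \Xi}\; \mathcal{L}(\bm{x}, \bm{\xi}, \lambda) \;=\; \sup_{\bm{\xi} \in \Xi}\; \mathcal{Q}(\bm{x}, \bm{\xi}).
\]
By Lemma~\ref{lemma:wc_lagrangian_function_properties}, the function $g(\lambda) := \sup_{\bm{\xi} \in \Xi} \mathcal{L}(\bm{x}, \bm{\xi}, \lambda)$ is non-decreasing in $\lambda$, so its argmax set is an upper interval; it therefore suffices to prove $g(\lambda^{\star}) \geq \sup_{\bm{\xi} \in \Xi} \mathcal{Q}(\bm{x}, \bm{\xi})$.

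Next, I would pick any $\bar{\bm{\xi}} \in \arg\max_{\bm{\xi} \in \Xi} \mathcal{Q}(\bm{x}, \bm{\xi})$---which exists because $\Xi$ is finite and $\bm{x}$ is feasible---and aim to establish strong duality at $\bar{\bm{\xi}}$ with multiplier $\lambda^{\star}$, namely $\mathcal{L}(\bm{x}, \bar{\bm{\xi}}, \lambda^{\star}) = \mathcal{Q}(\bm{x}, \bar{\bm{\xi}})$. The $\leq$ direction is Lemma~\ref{lemma:weak_duality}. For the $\geq$ direction, I would let $(\bar{\bm{y}}, \bar{\bm{z}})$ be any optimizer of $\mathcal{L}(\bm{x}, \bar{\bm{\xi}}, \lambda^{\star})$ and test its value against the feasible alternative $(\bm{y}^{\star}, \bar{\bm{\xi}})$ with $\bm{y}^{\star}$ attaining $\mathcal{Q}(\bm{x}, \bar{\bm{\xi}})$, which yields
\[
\bm{c}(\bar{\bm{\xi}})^\top \bm{x} + \bm{d}(\bar{\bm{\xi}})^\top \bar{\bm{y}} + \lambda^{\star}\,\phi(\bar{\bm{z}}, \bar{\bm{\xi}}) \;\leq\; \mathcal{Q}(\bm{x}, \bar{\bm{\xi}}).
\]
Substituting $\mathcal{Q}(\bm{x}, \bar{\bm{\xi}}) \leq u(\bm{x})$ and $\bm{c}(\bar{\bm{\xi}})^\top \bm{x} + \bm{d}(\bar{\bm{\xi}})^\top \bar{\bm{y}} \geq \ell(\bm{x})$ collapses the bound to $\lambda^{\star}\,\phi(\bar{\bm{z}}, \bar{\bm{\xi}}) \leq \lambda^{\star}$, so $\phi(\bar{\bm{z}}, \bar{\bm{\xi}}) \leq 1$.

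The main obstacle is to upgrade $\phi(\bar{\bm{z}}, \bar{\bm{\xi}}) \leq 1$ into the identity $\mathcal{L}(\bm{x}, \bar{\bm{\xi}}, \lambda^{\star}) \geq \mathcal{Q}(\bm{x}, \bar{\bm{\xi}})$, and this is where the hypothesis that $\mathcal{Q}(\bm{x}, \bm{\xi}) < +\infty$ for every $\bm{\xi} \in \{0,1\}^{n_p}$ enters. It guarantees that $(\bm{y}', \bm{\xi}')$ is feasible for the Lagrangian for any binary $\bm{\xi}'$, which in turn lets me perform a case analysis on $\phi(\bar{\bm{z}}, \bar{\bm{\xi}})$: if it equals $0$ then Lemma~\ref{lemma:penalty_function_properties}(2) applied to $(\bar{\bm{y}}, \bar{\bm{z}})$ immediately gives $\mathcal{L}(\bm{x}, \bar{\bm{\xi}}, \lambda^{\star}) = \mathcal{Q}(\bm{x}, \bar{\bm{\xi}})$, and if it is positive the previous bounding inequalities must be tight, forcing $\mathcal{L}(\bm{x}, \bar{\bm{\xi}}, \lambda^{\star}) = \ell(\bm{x}) + \lambda^{\star}\,\phi(\bar{\bm{z}}, \bar{\bm{\xi}}) \geq \ell(\bm{x}) + \lambda^{\star} = u(\bm{x}) \geq \mathcal{Q}(\bm{x}, \bar{\bm{\xi}})$; combining with weak duality then closes the gap. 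I anticipate that carefully justifying the positive-$\phi$ case---in particular, showing that one can restrict attention to optima where $\phi(\bar{\bm{z}}, \bar{\bm{\xi}})$ is either zero or exactly one, using the extreme-point structure induced by the enhanced feasibility hypothesis---will be the technically most delicate step.
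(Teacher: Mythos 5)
Your overall architecture is the same as the paper's: sandwich the Lagrangian value between $\ell(\bm{x})$ (from below, via assumption on the objective) and $u(\bm{x})$ (from above, via weak duality), and conclude that at $\lambda^{\star} = u(\bm{x}) - \ell(\bm{x})$ the penalty term of a Lagrangian optimizer must vanish; your reduction to a single worst-case $\bar{\bm{\xi}}$ instead of all $\bm{\xi} \in \Xi$ is fine and slightly leaner than the paper's contradiction argument. The problem is that the step you explicitly defer is precisely where the theorem's content lies, and your sketch of it does not work. From your sandwich you only get $\phi(\bar{\bm{z}}, \bar{\bm{\xi}}) \leq 1$ (and even that division needs $\lambda^{\star} > 0$). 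In the case $\phi(\bar{\bm{z}}, \bar{\bm{\xi}}) > 0$, the chain $\mathcal{L}(\bm{x},\bar{\bm{\xi}},\lambda^{\star}) \geq \ell(\bm{x}) + \lambda^{\star}\phi(\bar{\bm{z}},\bar{\bm{\xi}}) \geq \ell(\bm{x}) + \lambda^{\star} = u(\bm{x})$ requires $\phi(\bar{\bm{z}},\bar{\bm{\xi}}) \geq 1$, which nothing you proved rules in: $\phi$ could lie strictly in $(0,1)$, and the assertion that ``the previous bounding inequalities must be tight'' has no justification (nor would tightness help if $\phi<1$).

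What is actually needed, and what the paper's proof supplies, is that $\mathcal{L}(\bm{x},\bar{\bm{\xi}},\lambda^{\star})$ admits an optimal solution with $\bar{\bm{z}} \in \{0,1\}^{n_p}$, so that $\phi(\bar{\bm{z}},\bar{\bm{\xi}})$ is a Hamming distance and hence integer-valued, turning $\phi > 0$ into $\phi \geq 1$. This is exactly where the theorem's special hypothesis enters: the paper first argues that, for a feasible $\bm{x}$, finiteness of $\mathcal{Q}(\bm{x},\bm{\xi})$ for \emph{every} binary $\bm{\xi}$ makes the projection of the Lagrangian feasible set onto $\bm{z}$ equal to the whole cube $[0,1]^{n_p}$, and uses this to justify restricting to binary $\bm{z}$ before running the $\ell$/$u$ argument. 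Note that this reduction is not a routine ``the objective is linear in $\bm{z}$'' vertex argument: after minimizing out $\bm{y}$, the dependence on $\bm{z}$ is through a convex piecewise-linear value function, and extreme points of the coupled set $\{(\bm{y},\bm{z}) : \bm{T}\bm{x} + \bm{W}\bm{y} \geq \bm{h}(\bm{z})\}$ need not have binary $\bm{z}$; the full-cube projection property bought by the hypothesis is the essential ingredient. Since your proposal leaves this step as an unproven anticipation and the fallback you sketch for $0 < \phi \leq 1$ is incorrect, the proof is incomplete at its crux, even though the surrounding scaffolding (weak duality, choice of $\bar{\bm{\xi}}$, the $\ell$/$u$ sandwich) mirrors the paper's.
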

\begin{proof}
    Suppose that $\bm{x} \in \mathcal{X}$ is any feasible first-stage decision in \ref{eq:two_stage_ro_general}.
    Then, we claim that
    \[
    \mathrm{Proj}_{\bm{z}}
    \left( \left\{
    (\bm{y}, \bm{z}) \in \mathcal{Y}\times [0, 1]^{n_p}:
    \bm{T}\bm{x} + \bm{W} \bm{y} \geq \bm{h}(\bm{z})
    \right\}\right) = [0, 1]^{n_p}.
    \]
    The `$\subseteq$' inclusion is obvious.
    To show the reverse `$\supseteq$' inclusion, we note that the condition stated in the Theorem implies that since $\bm{x}$ is feasible and has a finite second-stage value $\mathcal{Q}(\bm{x}, \bm{\xi}) < +\infty$ for all $\bm{\xi} \in \Xi$, it also has a finite second-stage value for all $\bm{\xi} \in \{0, 1\}^{n_p}$.
    In other words, the set $\{\bm{y} \in \mathcal{Y}:  \bm{W} \bm{y} \geq \bm{h}(\bm{z}) - \bm{T}\bm{x} \}$ is non-empty for any setting of $\bm{z} = \bm{\xi} \in \{0, 1\}^{n_p}$. This proves the claim.
    
    Therefore, problem~\eqref{eq:lagrangian_general} always has an optimal solution $(\hat{\bm{y}}, \hat{\bm{z}})$ such that $\hat{\bm{z}} \in \{0, 1\}^{n_p}$, for any $\bm{\xi} \in \Xi$ and $\lambda \in \mathbb{R}_{+}$ (since its objective is linear in $\bm{z}$).
    In this case, the definition of $\phi(\hat{\bm{z}}, \bm{\xi})$ is equivalent to that of the Hamming distance between two binary vectors and therefore,
    we have
    $\phi(\hat{\bm{z}}, \bm{\xi}) \in \{0, 1, 2, \ldots, n_p\}$.
    
    We are now ready to prove the statement of the theorem.
    In particular, we aim to show that $\mathcal{Q}(\bm{x}, \bm{\xi}) = \mathcal{L}(\bm{x}, \bm{\xi}, \lambda)$ is satisfied for all $\lambda \geq u(\bm{x}) - \ell(\bm{x})$ and all $\bm{\xi} \in \Xi$.
    Note that along with Theorem~\ref{theorem:strong_duality}, this will prove the first part of this theorem.
    Suppose for the sake of contradiction that $\mathcal{Q}(\bm{x}, \hat{\bm{\xi}}) > \mathcal{L}(\bm{x}, \hat{\bm{\xi}}, \hat{\lambda})$ for some $\hat{\lambda} \geq u(\bm{x}) - \ell(\bm{x})$ and some $\hat{\bm{\xi}} \in \Xi$.
    Let $(\hat{\bm{y}}, \hat{\bm{z}})$ be an optimal solution of $\mathcal{L}(\bm{x}, \hat{\bm{\xi}}, \hat{\lambda})$ (i.e., problem~\eqref{eq:lagrangian_general}).
    Then, the second part of Lemma~\ref{lemma:penalty_function_properties} implies that we must have $\phi(\hat{\bm{z}}, \bm{\xi}) > 0$ (otherwise, $\hat{\bm{y}}$ would be feasible in $\mathcal{Q}(\bm{x}, \hat{\bm{\xi}})$ with the same objective value $\mathcal{L}(\bm{x}, \hat{\bm{\xi}}, \hat{\lambda})$ which contradicts the hypothesis).
    But since we have established in the previous paragraph that  $\phi(\hat{\bm{z}}, \bm{\xi}) \in \{0, 1, 2, \ldots, n_p\}$, this implies that we must have
    $\phi(\hat{\bm{z}}, \bm{\xi}) \geq 1$.
    Along with the definition of $\ell(\bm{x})$, this further implies the following (where we note that a finite value for $\ell(\bm{x})$ is guaranteed under assumption~\ref{assume:sufficiently_expensive_recourse_general}):
    \begin{align*}
    \mathcal{L}(\bm{x}, \hat{\bm{\xi}}, \hat{\lambda})
    &= \bm{c}(\hat{\bm{\xi}})^\top \bm{x} + \bm{d}(\hat{\bm{\xi}})^\top \hat{\bm{y}} + \hat{\lambda} \phi(\hat{\bm{z}}, \bm{\xi}) \\
    &\geq \inf_{\bm{\xi} \in \Xi, \; \bm{y} \in \mathcal{Y}} \{\bm{c}(\bm{\xi})^\top \bm{x} + \bm{d}(\bm{\xi})^\top \bm{y} \} + \hat{\lambda} \geq \ell(\bm{x}) + \hat{\lambda}.
    \end{align*}
    But from the hypothesis, and from the definition of $u(\bm{x})$, we have:
    \begin{equation*}
        \mathcal{L}(\bm{x}, \hat{\bm{\xi}}, \hat{\lambda})
        < \mathcal{Q}(\bm{x}, \hat{\bm{\xi}})
        \leq \sup_{\bm{\xi} \in \Xi} \mathcal{Q}(\bm{x}, \bm{\xi})
        \leq u(\bm{x}).
    \end{equation*}
    The last two inequalities together imply that $\hat{\lambda} < u(\bm{x}) - \ell(\bm{x})$ which is the desired contradiction.
\end{proof}

Theorem~\ref{theorem:optimal_multiplier_general} provides a closed-form expression for the optimal Lagrange multiplier of $\mathcal{L}(\bm{x}, \bm{\xi}, \lambda)$ in terms of any known upper bound $u(\bm{x})$ on the worst-case objective value of $\bm{x}$ and any known lower bound $\ell(\bm{x})$ on
the best-case objective value of $\bm{x}$.
The next theorem provides some sufficient conditions under which these quantities can be efficiently computed.
\begin{theorem}[Closed-form expressions for $\ell(\bm{x})$ and $u(\bm{x})$]\label{theorem:closed_form_lu}
    Suppose that the conditions of Theorem~\ref{theorem:optimal_multiplier_general} hold and that $\bm{x} \in \mathcal{X}$ is any feasible first-stage decision in \ref{eq:two_stage_ro_general}.
    Then, the quantities $\ell(\bm{x})$ and $u(\bm{x})$ admit closed-form expressions under any of the following conditions.
    \begin{enumerate}
        
        \item If $\mathcal{Y} \subseteq [{y}^\ell_1, {y}^u_1] \times [{y}_{2}, \bar{y}_{2}] \times \ldots \times [{y}^\ell_{n_2}, {y}^u_{n_2}]$ is compact, or more generally, if for all $\bm{\xi} \in \Xi$, an optimal solution $\hat{\bm{y}}$ of the problem $\mathcal{Q}(\bm{x}, \bm{\xi})$ is known to satisfy
        $\hat{\bm{y}} \in [{\bm{y}}^\ell, {\bm{y}}^u]$ for some ${\bm{y}}^\ell, {\bm{y}}^u \in \mathbb{R}^{n_2}$, then
        \begin{align*}
            \ell(\bm{x}) &= \inf_{\bm{\xi}, \bm{y}} \big\{
            \bm{c}(\bm{\xi})^\top \bm{x}  + \bm{d}(\bm{\xi})^\top {\bm{y}} : \bm{\xi} \in \{0, 1\}^{n_p}, y_i \in \{{{y}}^\ell_i, {{y}}^u_i \}, i \in [n_2]]
            \big \},
            \\
            u(\bm{x}) &= \sup_{\bm{\xi}, \bm{y}} \big\{
            \bm{c}(\bm{\xi})^\top \bm{x}  + \bm{d}(\bm{\xi})^\top {\bm{y}} : \bm{\xi} \in \{0, 1\}^{n_p}, y_i \in \{{{y}}^\ell_i, {{y}}^u_i \}, i \in [n_2]]
            \big \}.
        \end{align*}
    
        \item If the objective function $\bm{d}(\bm{\xi}) = \bm{d}_0 + \bm{D}\bm{\xi}$ satisfies $\bm{D}^\top \bm{y} \geq \bm{0}$ for all $\{ \bm{y} \in \mathcal{Y}: \bm{T}\bm{x} + \bm{W} \bm{y} \geq \bm{h}(\bm{\xi}) \}$ and all $\bm{\xi} \in \Xi$, then
        \begin{align*}
        \ell(\bm{x}) &= \inf_{\bm{\xi} \in [0, 1]^{n_p}} \bm{c}(\bm{\xi})^\top \bm{x} + \inf_{\bm{y} \in \mathcal{Y}} \bm{d}_0^\top \bm{y},
        \end{align*}
        If also %
        $\bm{h}(\bm{\xi}) = \bm{h}_0 + \bm{H}\bm{\xi}$ satisfies %
        $\bm{H}^\top \bm{y} \geq \bm{0}$ for all $\{ \bm{y} \in \mathcal{Y}: \bm{T}\bm{x} + \bm{W} \bm{y} \geq \bm{h}(\bm{\xi}) \}$ and all $\bm{\xi} \in \Xi$, then
        \[
        u(\bm{x}) = \sup_{\bm{\xi} \in \{0, 1\}^{n_p}} \bm{c}(\bm{\xi})^\top \bm{x} 
        +
        \bm{d}(\one)^\top \bm{y}^0,
        \]
        for any arbitrary $\bm{y}^0 \in \{ \bm{y} \in \mathcal{Y}: \bm{T}\bm{x} + \bm{W} \bm{y} \geq \bm{h}(\one) \}$.    
    
        \item If $\bm{x}$ is static robust feasible; that is, the set $\mathcal{Y}(\bm{x}) \coloneqq \{ \bm{y} \in \mathcal{Y}: \bm{T}\bm{x} + \bm{W} \bm{y} \geq \sup_{\bm{\xi} \in \Xi} \bm{h}(\bm{\xi}) \} \neq \emptyset$, then
        \[
        u(\bm{x}) = \sup_{\bm{\xi} \in \{0, 1\}^{n_p}} \big\{
            \bm{c}(\bm{\xi})^\top \bm{x}  + \bm{d}(\bm{\xi})^\top \bm{y}^0
            \big \},
        \]
        for any arbitrary $\bm{y}^0 \in \mathcal{Y}(\bm{x})$.
    \end{enumerate}
\end{theorem}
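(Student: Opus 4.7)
The plan is to verify, for each of the three sufficient conditions, that the proposed closed-form expressions satisfy the defining requirements imposed by Theorem~\ref{theorem:optimal_multiplier_general}: namely, that $\ell(\bm{x})$ lower bounds $\inf\{\bm{c}(\bm{\xi})^\top \bm{x} + \bm{d}(\bm{\xi})^\top \bm{y} : \bm{\xi} \in \Xi, \bm{y} \in \mathcal{Y}\}$ and that $u(\bm{x})$ upper bounds $\sup\{\mathcal{Q}(\bm{x}, \bm{\xi}) : \bm{\xi} \in \Xi\}$. A uniform strategy across all three parts is to exploit the bilinearity of the objective --- affine in $\bm{y}$ for fixed $\bm{\xi}$ and vice versa --- together with the structural hypothesis of each case to decouple the joint optimization over $(\bm{\xi}, \bm{y})$ and reduce it to elementary operations.

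For Part 1, the set inclusions $\Xi \subseteq \{0,1\}^{n_p}$ and $\mathcal{Y} \subseteq [\bm{y}^\ell, \bm{y}^u]$ relax the feasible region in $\ell(\bm{x})$; since the objective is linear in $\bm{y}$ on the box, its infimum is attained at a vertex, which yields the stated formula. For $u(\bm{x})$, the hypothesis that an optimal $\hat{\bm{y}}$ of $\mathcal{Q}(\bm{x}, \bm{\xi})$ lies in $[\bm{y}^\ell, \bm{y}^u]$ gives $\mathcal{Q}(\bm{x}, \bm{\xi}) \leq \bm{c}(\bm{\xi})^\top \bm{x} + \sup_{\bm{y} \in [\bm{y}^\ell, \bm{y}^u]} \bm{d}(\bm{\xi})^\top \bm{y}$, with the inner supremum again attained at a vertex; taking the outer supremum over $\bm{\xi} \in \Xi \subseteq \{0,1\}^{n_p}$ concludes. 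For Part 2, the decomposition $\bm{d}(\bm{\xi})^\top \bm{y} = \bm{d}_0^\top \bm{y} + \bm{\xi}^\top (\bm{D}^\top \bm{y})$ combined with nonnegativity of $\bm{\xi}$ and $\bm{D}^\top \bm{y}$ gives $\bm{d}(\bm{\xi})^\top \bm{y} \geq \bm{d}_0^\top \bm{y}$, which separates the infimum into independent minimizations over $\bm{\xi}$ (on the relaxed set $[0,1]^{n_p}$) and $\bm{y}$ (against $\bm{d}_0$ alone). For the $u(\bm{x})$ formula, the hypothesis on $\bm{H}$ is meant to ensure that $\bm{h}(\bm{\xi}) \leq \bm{h}(\one)$ componentwise for every $\bm{\xi} \in \Xi$, so any $\bm{y}^0$ feasible at $\bm{\xi} = \one$ is feasible at every realization; a second application of the sign argument yields $\bm{d}(\bm{\xi})^\top \bm{y}^0 \leq \bm{d}(\one)^\top \bm{y}^0$, and taking supremum over $\bm{\xi}$ produces the claim. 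Part 3 reuses this last template but without any sign condition: static robust feasibility directly furnishes a $\bm{y}^0 \in \mathcal{Y}(\bm{x})$ that is feasible at every $\bm{\xi} \in \Xi$ by construction, so $\mathcal{Q}(\bm{x}, \bm{\xi}) \leq \bm{c}(\bm{\xi})^\top \bm{x} + \bm{d}(\bm{\xi})^\top \bm{y}^0$ and we pass to the supremum over $\bm{\xi} \in \Xi \subseteq \{0,1\}^{n_p}$.

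The main obstacle will be reconciling the scope of the sign hypotheses in Part 2 --- stated over the feasible set $\{\bm{y} \in \mathcal{Y} : \bm{T}\bm{x} + \bm{W}\bm{y} \geq \bm{h}(\bm{\xi})\}$ --- with the larger set $\mathcal{Y}$ appearing in the definition of $\ell(\bm{x})$, and similarly extracting the needed componentwise monotonicity of $\bm{h}$ from the stated hypothesis on $\bm{H}$. I expect the resolution to come by inspecting the proof of Theorem~\ref{theorem:optimal_multiplier_general}, where the lower bound $\ell(\bm{x})$ is only invoked at the specific optimal pair $(\hat{\bm{y}}, \hat{\bm{z}})$ of the Lagrangian subproblem --- which is feasible by construction --- so that the feasibility-restricted sign condition is in fact all that is required. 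The rest of the argument is bookkeeping on linear functions.
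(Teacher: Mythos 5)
Your proposal follows essentially the same route as the paper's proof: Part~1 by enlarging the feasible region and invoking extreme-point attainment of the bilinear objective over boxes, Part~2 by the sign decomposition $\bm{d}(\bm{\xi})^\top\bm{y} = \bm{d}_0^\top\bm{y} + \bm{\xi}^\top\bm{D}^\top\bm{y}$ together with $\bm{h}(\bm{\xi}) \leq \bm{h}(\one)$ so that $\bm{y}^0$ is feasible for every $\bm{\xi} \in \Xi$, and Part~3 by plugging a static robust feasible $\bm{y}^0$ into $\mathcal{Q}(\bm{x},\bm{\xi})$ (the paper phrases this via the max-min inequality). The ``obstacle'' you flag about the sign hypotheses being stated only on the feasible set is real but is glossed in the paper as well, which simply applies the condition over all of $\mathcal{Y}$, so it does not separate your argument from the paper's.
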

\begin{proof}
    The first statement follows by observing first the following:
    \begin{align*}
    \inf_{\bm{\xi} \in \Xi} \inf_{\bm{y} \in \mathcal{Y}} \left\{\bm{c}(\bm{\xi})^\top \bm{x} + \bm{d}(\bm{\xi})^\top \bm{y} \right\}
    &
    \geq
    \inf_{\bm{\xi} \in [0, 1]^{n_p}} \inf_{\bm{y} \in [{\bm{y}}^\ell, {\bm{y}}^u]} \left\{\bm{c}(\bm{\xi})^\top \bm{x} + \bm{d}(\bm{\xi})^\top \bm{y} \right\},
    \\
    \text{and}\; \sup_{\bm{\xi} \in \Xi} \mathcal{Q}(\bm{x}, \bm{\xi})
    &
    =
    \sup_{\bm{\xi} \in \Xi} \inf_{\bm{y} \in \mathcal{Y}} \left\{\bm{c}(\bm{\xi})^\top \bm{x}  + \bm{d}(\bm{\xi})^\top {\bm{y}} : \bm{T}\bm{x} + \bm{W} \bm{y} \geq \bm{h}(\bm{\xi})\right\}
    \\
    &
    \leq
    \sup_{\bm{\xi} \in \{0, 1\}^{n_p}} \inf_{\bm{y} \in \mathcal{Y}} \left\{\bm{c}(\bm{\xi})^\top \bm{x}  + \bm{d}(\bm{\xi})^\top {\bm{y}} : \bm{T}\bm{x} + \bm{W} \bm{y} \geq \bm{h}(\bm{\xi})\right\}
    \\
    &
    \leq
    \sup_{\bm{\xi} \in \{0, 1\}^{n_p}} \sup_{\bm{y} \in [{\bm{y}}^\ell, {\bm{y}}^u]} \left\{\bm{c}(\bm{\xi})^\top \bm{x}  + \bm{d}(\bm{\xi})^\top {\bm{y}} \right\},
    \end{align*}
    and then by noting that the optimal value of a disjoint bilinear program over polytopes is attained at extreme points of the individual polytopes.
    
    The expression for $\ell(\bm{x})$ in the second statement follows by noting that because $\bm{\xi} \geq \bm{0}$, we have
    $\bm{d}(\bm{\xi})^\top {\bm{y}} = \bm{d}_0^\top \bm{y} + \bm{\xi}^\top \bm{D}^\top \bm{y} \geq \bm{d}_0^\top \bm{y}$ for all $\bm{y} \in \mathcal{Y}$.
    Similarly, the expression for $u(\bm{x})$ can be obtained by noting that because $\bm{\xi} \leq \one$,
    we have
    $\bm{d}(\bm{\xi})^\top {\bm{y}} = \bm{d}_0^\top \bm{y} + \bm{\xi}^\top \bm{D}^\top \bm{y} \leq \bm{d}_0^\top \bm{y} + \one^\top \bm{D}^\top \bm{y} = \bm{d}(\one)^\top \bm{y}$ for all $\bm{y} \in \mathcal{Y}$.
    Moreover, any $\bm{y} \in \mathcal{Y}$ that is feasible in problem $\mathcal{Q}(\bm{x}, \one)$ is also feasible in $\mathcal{Q}(\bm{x}, \bm{\xi})$ for any $\bm{\xi} \in \Xi$, since $\bm{T}\bm{x} + \bm{W} \bm{y} \geq \bm{h}(\one) \geq \bm{h}(\bm{\xi})$.
    Therefore, we have:
    \begin{align*}
        \sup_{\bm{\xi} \in \Xi} \mathcal{Q}(\bm{x}, \bm{\xi})
        &
        =
        \sup_{\bm{\xi} \in \Xi} \bm{c}(\bm{\xi})^\top \bm{x} 
        +
        \sup_{\bm{\xi} \in \Xi} \inf_{\bm{y} \in \mathcal{Y}} \left\{\bm{d}(\bm{\xi})^\top {\bm{y}} : \bm{T}\bm{x} + \bm{W} \bm{y} \geq \bm{h}(\bm{\xi})\right\}
        \\
        &
        =
        \sup_{\bm{\xi} \in \Xi} \bm{c}(\bm{\xi})^\top \bm{x} 
        +
        \inf_{\bm{y} \in \mathcal{Y}} \left\{\bm{d}(\one)^\top {\bm{y}} : \bm{T}\bm{x} + \bm{W} \bm{y} \geq \bm{h}(\one)\right\}
        \\
        &
        \leq
        \sup_{\bm{\xi} \in \{0, 1\}^{n_p}} \bm{c}(\bm{\xi})^\top \bm{x} 
        +
        \bm{d}(\one)^\top {\bm{y}}^0.
    \end{align*}
    As per the condition of Theorem~\ref{theorem:optimal_multiplier_general}, since $\bm{x}$ satisfies $\sup \big\{\mathcal{Q}(\bm{x}, \bm{\xi}) : \bm{\xi} \in \Xi \big\} < +\infty$,
    we also must have that the feasible region of problem
    $\mathcal{Q}(\bm{x}, \one)$
    is non-empty;
    that is, ${\bm{y}}^0$ is guaranteed to exist.
    
    The last statement follows from the max-min inequality:
    \begin{align*}
        \sup_{\bm{\xi} \in \Xi} \mathcal{Q}(\bm{x}, \bm{\xi})
        &
        \leq
        \inf_{\bm{y} \in \mathcal{Y}} \left\{ \sup_{\bm{\xi} \in \Xi} \left\{\bm{c}(\bm{\xi})^\top \bm{x}  + \bm{d}(\bm{\xi})^\top {\bm{y}} \right\} : \bm{T}\bm{x} + \bm{W} \bm{y} \geq \max_{\bm{\xi} \in \Xi} \bm{h}(\bm{\xi})\right\}
        \\
        &
        \leq
        \sup_{\bm{\xi} \in \{0, 1\}^{n_p}} \left\{\bm{c}(\bm{\xi})^\top \bm{x}  + \bm{d}(\bm{\xi})^\top {\bm{y}}^0 \right\},
    \end{align*}
    where $\bm{y}^0 \in \mathcal{Y}(\bm{x})$.
\end{proof}

We close with some remarks about Theorems~\ref{theorem:optimal_multiplier_general} and~\ref{theorem:closed_form_lu}.
First, the condition under which Theorem~\ref{theorem:optimal_multiplier_general} is established is neither weaker nor stronger than relatively complete recourse.
It is equivalent to stating that if a first-stage decision is feasible for all $\bm{\xi} \in \Xi$, then it is also feasible for all $\bm{\xi} \in \{0, 1\}^{n_p}$.
This condition is satisfied in many applications, including those where slack variables are explicitly added to ensure problem feasibility.
Second, all of the conditions stated in Theorem~\ref{theorem:closed_form_lu} are easily verifiable and largely problem-independent.
Nevertheless, although the closed-form expressions in Theorem~\ref{theorem:closed_form_lu} remain valid under its stated conditions,
it is important to choose
small values of $u(\bm{x})$ and large values of $\ell(\bm{x})$ %
as this will lead to smaller values of the optimal Lagrange multiplier.
This will be important for numerical reasons that we shall elaborate in the next section.

\section{Computation of Worst-Case Parameter Realizations}\label{sec:algorithm}

We now use the analytical results from the previous section to design practical methods for computing worst-case parameter realizations that can be integrated in the Benders and column-and-constraint generation algorithms even when the relatively complete recourse assumption may not be satisfied.
We also discuss extensions to mixed-integer second-stage decisions.

Theorems~\ref{theorem:necessary_optimality_condition} and~\ref{theorem:optimal_multiplier_general} characterize the optimal Lagrange multiplier only in the context of feasible first-stage decisions.
To reliably identify a potentially infeasible first-stage decision $\bm{x} \in \mathcal{X}$ for which the second-stage problem becomes infeasible for some $\bm{\xi} \in \Xi$, we introduce the constraint violation functions $\mathcal{S}: \mathcal{X} \times \Xi \to \mathbb{R}$ and $\mathcal{S}_\mathcal{I}: \mathcal{X} \times \Xi \to \mathbb{R}$
which minimize constraint infeasibility in the problems $\mathcal{L}$ and $\mathcal{L}_\mathcal{I}$, respectively.
\begin{align}
    \mathcal{S}(\bm{x}, \bm{\xi})
    &=\left[
    \begin{aligned}
        \mathop{\text{minimize}}_{\bm{y}, \bm{z}, \bm{\sigma}} \;\; &
        \one^\top \bm{\sigma} + \phi(\bm{z}, \bm{\xi}) \\
        \text{subject to} \;\; & \bm{y} \in \mathcal{Y}, \;\; \bm{z} \in [0, 1]^{n_p}, \;\; \bm{\sigma} \in \mathbb{R}^m_{+} \\
        & \bm{T}\bm{x} + \bm{W} \bm{y} + \bm{\sigma} \geq \bm{h}(\bm{z}).
    \end{aligned}
    \right]
    \label{eq:slack_general} \\
    \mathcal{S}_\mathcal{I}(\bm{x}, \bm{\xi})
    &=\left[
    \begin{aligned}
        \mathop{\text{minimize}}_{\bm{y}, \bm{\sigma}} \;\; &
        \one^\top \bm{\sigma} +  \phi_\mathcal{I}(\bm{x}, \bm{y}, \bm{\xi}) \\
        \text{subject to} \;\; & \bm{y} \in \mathcal{Y}, \;\; \bm{\sigma} \in \mathbb{R}^m_{+} \\
        & \bm{g}(\bm{x}, \bm{y}) + \bm{\sigma} \geq \bm{0}.
    \end{aligned}
    \right]
    \label{eq:slack_indicator}
\end{align}
We immediately note the following properties of these functions.
\begin{lemma}[Properties of the constraint violation functions]\label{lemma:constraint_violation_function_properties}
    For any $\bm{x} \in \mathcal{X}$ and $\bm{\xi} \in \Xi$, the functions
    $\mathcal{S}$ and $\mathcal{S}_\mathcal{I}$
    satisfy the following properties.
    \begin{enumerate}
        \item $\mathcal{S}(\bm{x}, \bm{\xi}) \geq 0$ and $\mathcal{S}(\bm{x}, \bm{\xi}) = 0$ if and only if $\mathcal{Q}(\bm{x}, \bm{\xi}) < +\infty$.
        \item $\mathcal{S}_\mathcal{I}(\bm{x}, \bm{\xi}) \geq 0$ and $\mathcal{S}_\mathcal{I}(\bm{x}, \bm{\xi}) = 0$ if and only if $\mathcal{Q}_\mathcal{I}(\bm{x}, \bm{\xi}) < +\infty$.
    \end{enumerate}
\end{lemma}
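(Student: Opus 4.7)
The plan is to prove both statements by the same two-step template: first establish non-negativity of the violation function by inspecting the objective and constraints of its defining program, and then argue the biconditional by (i) exhibiting a zero-objective feasible solution whenever the corresponding second-stage problem is feasible, and (ii) reading off feasibility of $\bm{y}$ in the second-stage problem from a minimizer that attains value zero.

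For $\mathcal{S}$: non-negativity is immediate from the feasibility constraint $\bm{\sigma} \in \mathbb{R}^m_+$ together with $\phi(\bm{z}, \bm{\xi}) \geq 0$, which follows from the first part of Lemma~\ref{lemma:penalty_function_properties} since $\bm{z} \in [0,1]^{n_p}$. For the ``$\Rightarrow$'' direction of the biconditional, if $\mathcal{S}(\bm{x}, \bm{\xi}) = 0$ then any minimizer must satisfy $\bm{\sigma} = \bm{0}$ and $\phi(\bm{z}, \bm{\xi}) = 0$ (as a sum of two non-negatives equaling zero); the first part of Lemma~\ref{lemma:penalty_function_properties} gives $\bm{z} = \bm{\xi}$, and the constraint of \eqref{eq:slack_general} then reduces to $\bm{T}\bm{x} + \bm{W}\bm{y} \geq \bm{h}(\bm{\xi})$, so $\bm{y}$ is feasible in $\mathcal{Q}(\bm{x}, \bm{\xi})$. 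Assumption~\ref{assume:sufficiently_expensive_recourse_general} then forces $\mathcal{Q}(\bm{x}, \bm{\xi}) < +\infty$. For ``$\Leftarrow$'', if $\mathcal{Q}(\bm{x}, \bm{\xi}) < +\infty$, pick any feasible $\bm{y}$ and take $(\bm{z}, \bm{\sigma}) = (\bm{\xi}, \bm{0})$; this triple is feasible in \eqref{eq:slack_general} with objective $0$, which combined with non-negativity pins $\mathcal{S}(\bm{x}, \bm{\xi}) = 0$.

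For $\mathcal{S}_\mathcal{I}$: I would follow the same template, substituting the third part of Lemma~\ref{lemma:penalty_function_properties} for the first two. The one subtlety is that the constraint in \eqref{eq:slack_indicator} is $\bm{g}(\bm{x}, \bm{y}) + \bm{\sigma} \geq \bm{0}$ rather than $\bm{g}(\bm{x}, \bm{y}) \geq \bm{0}$, so the hypothesis of part 3 of Lemma~\ref{lemma:penalty_function_properties} does not apply verbatim to $(\bm{x}, \bm{y}, \bm{\xi})$. To recover non-negativity, I would introduce $\tilde{g}_i \coloneqq g_i(\bm{x}, \bm{y}) + \sigma_i \geq 0$, substitute $g_i = \tilde{g}_i - \sigma_i$ into the definition \eqref{eq:penalty_indicator} of $\phi_\mathcal{I}$, and collect terms; under the natural indexing convention that each row $i \in [m]$ is governed by at most one indicator, the coefficient of each $\sigma_i$ in the regrouped objective lies in $[0, 1]$ and each $\tilde{g}_i$ appears with a non-negative coefficient, so $\one^\top \bm{\sigma} + \phi_\mathcal{I}(\bm{x}, \bm{y}, \bm{\xi}) \geq 0$ for every feasible $(\bm{y}, \bm{\sigma})$. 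Once non-negativity is in hand, the two directions of the biconditional mirror those for $\mathcal{S}$: $\mathcal{S}_\mathcal{I}(\bm{x}, \bm{\xi}) = 0$ forces $\bm{\sigma} = \bm{0}$ at any minimizer, hence $\bm{g}(\bm{x}, \bm{y}) \geq \bm{0}$, at which point part 3 of Lemma~\ref{lemma:penalty_function_properties} with $\phi_\mathcal{I}(\bm{x}, \bm{y}, \bm{\xi}) = 0$ yields feasibility of $\bm{y}$ in $\mathcal{Q}_\mathcal{I}(\bm{x}, \bm{\xi})$, and Assumption~\ref{assume:sufficiently_expensive_recourse_indicator} finishes; conversely, any $\bm{y}$ feasible in $\mathcal{Q}_\mathcal{I}(\bm{x}, \bm{\xi})$ paired with $\bm{\sigma} = \bm{0}$ is feasible in \eqref{eq:slack_indicator} with objective value $0$.

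The main obstacle is the non-negativity step for $\mathcal{S}_\mathcal{I}$, which is the only place where the argument departs from a direct application of Lemma~\ref{lemma:penalty_function_properties}; both biconditionals reduce to routine manipulations after that.
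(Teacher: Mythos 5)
Your proof is correct and follows the same template as the paper's: non-negativity of the objective of the slack problems, plus the two constructions (a zero-value feasible point from a feasible $\bm{y}$, and feasibility of $\bm{y}$ read off a zero-value minimizer via $\bm{\sigma}=\bm{0}$ and, for $\mathcal{S}$, $\phi(\bm{z},\bm{\xi})=0 \Rightarrow \bm{z}=\bm{\xi}$). The one place you genuinely depart from the paper is the non-negativity step for $\mathcal{S}_\mathcal{I}$, and your instinct there is right: the paper's proof simply asserts that ``the penalty functions are always non-negative,'' but Lemma~\ref{lemma:penalty_function_properties} only guarantees $\phi_\mathcal{I}(\bm{x},\bm{y},\bm{\xi}) \geq 0$ under $\bm{g}(\bm{x},\bm{y}) \geq \bm{0}$, which the relaxed constraint $\bm{g}(\bm{x},\bm{y}) + \bm{\sigma} \geq \bm{0}$ in \eqref{eq:slack_indicator} does not enforce. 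Your substitution $\tilde g_i = g_i(\bm{x},\bm{y}) + \sigma_i$ and regrouping closes this gap, at the price of the convention that each row $i$ carries total indicator coefficient at most one in $\phi_\mathcal{I}$; some such condition is genuinely needed (if a row lies in, say, $\mathcal{I}_j^1$ and $\mathcal{I}_{j'}^0$ with $\xi_j = 1$, $\xi_{j'} = 0$, and $g_i$ can be made very negative within $\mathcal{Y}$, the objective of \eqref{eq:slack_indicator} can be driven below zero), and it holds in all of the paper's applications, where each constraint is switched by a single indicator. Two minor remarks: your appeal to Assumption~\ref{assume:sufficiently_expensive_recourse_general} in the ``$\Rightarrow$'' direction is superfluous, since feasibility of $\mathcal{Q}(\bm{x},\bm{\xi})$ alone already gives $\mathcal{Q}(\bm{x},\bm{\xi}) < +\infty$ under the paper's sign conventions; and, like the paper, you implicitly assume the infimum in the slack problems is attained, which is unproblematic here.
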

\begin{proof}
    First, since $\mathcal{Y} \neq \emptyset$, both minimization problems \eqref{eq:slack_general} and \eqref{eq:slack_indicator} are always feasible.
    Also, since the penalty functions $\phi$ and $\phi_\mathcal{I}$ are always non-negative, the objective functions of both problems are non-negative by construction.
    Finally, note that if $\mathcal{S}(\bm{x}, \bm{\xi}) = 0$,
    then any optimal solution $(\bm{y}, \bm{z}, \bm{\sigma})$ satisfies
    $\bm{\sigma} = \bm{0}$, implying that $(\bm{y}, \bm{z})$ is feasible in the second-stage problem $\mathcal{Q}(\bm{x}, \bm{\xi})$.
    Similarly, for any feasible solution of the latter problem, we can construct a feasible solution in $\mathcal{S}(\bm{x}, \bm{\xi})$ with objective value $0$, by setting $\bm{\sigma} = 0$.
    Similar arguments can be made for $\mathcal{S}_\mathcal{I}$.
\end{proof}

Using the constraint violation functions, we can compute worst-case parameter realizations even when the relatively complete recourse assumption may not be satisfied.
Algorithm~\ref{algo:wc:closed_form} describes the computations for \ref{eq:two_stage_ro_general} when the sufficient conditions of Theorem~\ref{theorem:closed_form_lu} are satisfied, 
whereas Algorithms~\ref{algo:wc:general} and \ref{algo:wc:indicator} describe the computations for general instances of \ref{eq:two_stage_ro_general} and \ref{eq:two_stage_ro_indicator}, respectively.
All algorithms first compute the worst-case constraint violations.
If this is equal to $0$ (with actual implementations using a small non-zero tolerance),
then Lemma~\ref{lemma:constraint_violation_function_properties} ensures that $\bm{x} \in \mathcal{X}$
is a feasible first-stage decision.
The algorithms then proceed to compute worst-case parameter realizations over the Lagrangian dual functions.
We note that the computation of the worst-case constraint violations can be skipped if the problem is known to satisfy the relatively complete recourse assumption.

Algorithm~\ref{algo:wc:closed_form} applies Theorem~\ref{theorem:optimal_multiplier_general} to compute a closed-form expression for the optimal Lagrange multiplier when $\bm{x}$ is feasible.
Indeed, in this case, the conditions of Theorem~\ref{theorem:optimal_multiplier_general} are satisfied and $\hat{\lambda}$ in line~\ref{algo:wc:closed_form:lambda_closed_form} is an optimal multiplier.
The conditions in Theorem~\ref{theorem:closed_form_lu} and the corresponding closed-form expressions that are used for computing the optimal multiplier can all be verified and computed efficiently, and even analytically in several applications.
We note, however, that whenever possible, problem-specific knowledge that can yield smaller values of $u(\bm{x})$ (i.e., tighter upper bounds on the worst-case objective value of $\bm{x}$) and larger values of $\ell(\bm{x})$ (i.e., tighter lower bounds on the best-case objective value of $\bm{x}$) should be used, since it will lead to smaller values of $\hat{\lambda}$ and a computationally less expensive maximization (over $\bm{\xi} \in \Xi$) of the Lagrangian function evaluated at $\hat{\lambda}$ in line~\ref{algo:wc:closed_form:optimal_xi_computation}.
Note that if $\bm{x}$ is infeasible, then any positive value of $\hat{\lambda}$ is an optimal multiplier and line~\ref{algo:wc:closed_form:feasible_xi_computation} arbitrarily sets $\hat{\lambda} = 1$.
 
\begin{algorithm}[!htbp]
    \caption{Computation of worst-case parameter realization for \ref{eq:two_stage_ro_general} when the conditions in Theorem~\ref{theorem:closed_form_lu} are satisfied}
    \label{algo:wc:closed_form}
    \begin{algorithmic}[1]
        \renewcommand{\algorithmicrequire}{\textbf{Input:}}
        \renewcommand{\algorithmicensure}{\textbf{Output:}}
        \REQUIRE $\bm{x} \in \mathcal{X}$
        \ENSURE $\hat{\bm{\xi}} \in \mathop{\arg\max}\limits_{\bm{\xi} \in \Xi} \mathcal{Q}(\bm{x}, \bm{\xi})$ and $\hat{\lambda} \in \mathop{\arg\max}\limits_{\lambda \in \mathbb{R}_{+}} \bigg\{\sup\limits_{\bm{\xi} \in \Xi} \mathcal{L}(\bm{x}, \bm{\xi}, \lambda)\bigg\}$
        \STATE Compute 
        $
        \hat{\bm{\xi}} \in \mathop{\arg\max}\limits_{\bm{\xi} \in \Xi} \mathcal{S}(\bm{x}, \bm{\xi})
        $
        and set $\hat{\lambda} = 1$
        \label{algo:wc:closed_form:feasible_xi_computation}
        \IF {$\mathcal{S}(\bm{x}, \hat{\bm{\xi}}) = 0$}
        \STATE Compute $\ell(\bm{x})$ and $u(\bm{x})$ as per Theorem~\ref{theorem:optimal_multiplier_general} and set $\hat{\lambda} = u(\bm{x}) - \ell(\bm{x})$
        \label{algo:wc:closed_form:lambda_closed_form}
        \STATE Compute
        $
        \hat{\bm{\xi}} \in \mathop{\arg\max}\limits_{\bm{\xi} \in \Xi} \mathcal{L}(\bm{x}, \bm{\xi}, \hat{\lambda})
        $
        \label{algo:wc:closed_form:optimal_xi_computation}
        \ENDIF
    \end{algorithmic}
\end{algorithm}

Algorithms~\ref{algo:wc:general} and~\ref{algo:wc:indicator} exploit Lemma~\ref{lemma:wc_lagrangian_function_properties} and Theorem~\ref{theorem:necessary_optimality_condition} to compute a multiplier for \ref{eq:two_stage_ro_general} and \ref{eq:two_stage_ro_indicator}, respectively, when $\bm{x} \in \mathcal{X}$ is a feasible first-stage decision. %
Since the worst-case Lagrangian functions are monotonically non-decreasing in $\lambda$,
the algorithms verify if the necessary conditions in Theorem~\ref{theorem:necessary_optimality_condition} are satisfied for exponentially increasing trial values of $\hat{\lambda}$.
Indeed, the factor 2 appearing in line~\ref{algo:wc:general:lambda_update} is arbitrary, and any value greater than $1$ will also work.
We highlight that if $\bar{\lambda}$ is the smallest possible optimal multiplier, then the algorithm terminates after $\mathcal{O}(\log(\max\{\bar{\lambda}, \lambda^0\}/\lambda^0))$
trial evaluations of $\hat{\lambda}$.
Note that computing $\hat{\bm{y}}, \hat{\bm{z}}$ in line~\ref{algo:wc:general:optimal_y_computation} requires solving a deterministic linear program if the second-stage decisions are continuous, or a deterministic mixed-integer program otherwise.
Since Theorem~\ref{theorem:necessary_optimality_condition} is only a necessary condition, it is possible that the final multiplier $\hat{\lambda}$ and parameter realization $\hat{\bm{\xi}}$ are not optimal and hence,
$\mathcal{L}(\bm{x}, \hat{\bm{\xi}}, \hat{\lambda}) < \sup_{\bm{\xi} \in \Xi} \mathcal{Q}(\bm{x}, \bm{\xi})$.
We shall discuss a practical workaround in the next subsection.

\begin{algorithm}[!htbp]
    \caption{Computation of worst-case parameter realization for general \ref{eq:two_stage_ro_general}}
    \label{algo:wc:general}
    \begin{algorithmic}[1]
        \renewcommand{\algorithmicrequire}{\textbf{Input:}}
        \renewcommand{\algorithmicensure}{\textbf{Output:}}
        \REQUIRE $\bm{x} \in \mathcal{X}$, $\lambda^0 > 0$
        \ENSURE Either $\hat{\bm{\xi}} \in \Xi: \mathcal{Q}(\bm{x}, \hat{\bm{\xi}}) = +\infty$, $\hat{\lambda} = \lambda^0$ or $\hat{\bm{\xi}}, \hat{\lambda}$ satisfying conditions of Theorem~\ref{theorem:necessary_optimality_condition}
        \STATE Compute 
        $
        \hat{\bm{\xi}} \in \mathop{\arg\max}\limits_{\bm{\xi} \in \Xi} \mathcal{S}(\bm{x}, \bm{\xi})
        $
        and set $\hat{\lambda} = \lambda^0$
        \label{algo:wc:general:feasible_xi_computation}
        \IF {$\mathcal{S}(\bm{x}, \hat{\bm{\xi}}) = 0$}
        \STATE Set $\hat{\lambda} = \lambda^0/2$
        \REPEAT
        \STATE Update $\hat{\lambda} \gets 2\hat{\lambda}$
        \label{algo:wc:general:lambda_update}
        \STATE Compute 
        $
        \hat{\bm{\xi}} \in \mathop{\arg\max}\limits_{\bm{\xi} \in \Xi} \mathcal{L}(\bm{x}, \bm{\xi}, \hat{\lambda})
        $
        \label{algo:wc:general:optimal_xi_computation}
        \STATE Compute 
        $
        (\hat{\bm{y}}, \hat{\bm{z}}) \in \mathop{\arg\min}\limits_{(\bm{y}, \bm{z}) \in \mathcal{Y} \times [0, 1]^{n_p}} \left\{
        \bm{d}(\hat{\bm{\xi}})^\top \bm{y} + \hat{\lambda} \phi(\bm{z}, \hat{\bm{\xi}})
        :
        \bm{T}\bm{x} + \bm{W} \bm{y} \geq \bm{h}(\bm{z})
        \right\}
        $
        \label{algo:wc:general:optimal_y_computation}
        \UNTIL {$\phi(\hat{\bm{z}}, \hat{\bm{\xi}}) = 0$}
        \ENDIF
    \end{algorithmic}
\end{algorithm}

\begin{algorithm}[!htbp]
    \caption{Computation of worst-case parameter realization for general \ref{eq:two_stage_ro_indicator}}
    \label{algo:wc:indicator}
    \begin{algorithmic}[1]
        \renewcommand{\algorithmicrequire}{\textbf{Input:}}
        \renewcommand{\algorithmicensure}{\textbf{Output:}}
        \REQUIRE $\bm{x} \in \mathcal{X}$, $\lambda^0 > 0$
        \ENSURE Either $\hat{\bm{\xi}} \in \Xi: \mathcal{Q}_\mathcal{I}(\bm{x}, \hat{\bm{\xi}}) = +\infty$, $\hat{\lambda} = \lambda^0$ or $\hat{\bm{\xi}}, \hat{\lambda}$ satisfying conditions of Theorem~\ref{theorem:necessary_optimality_condition}
        \STATE Compute 
        $
        \hat{\bm{\xi}} \in \mathop{\arg\max}\limits_{\bm{\xi} \in \Xi} \mathcal{S}_\mathcal{I}(\bm{x}, \bm{\xi})
        $
        and set $\hat{\lambda} = \lambda^0$
        \label{algo:wc:indicator:feasible_xi_computation}
        \IF {$\mathcal{S}_\mathcal{I}(\bm{x}, \hat{\bm{\xi}}) = 0$}
        \STATE Set $\hat{\lambda} = \lambda^0/2$
        \REPEAT
        \STATE Update $\hat{\lambda} \gets 2\hat{\lambda}$
        \label{algo:wc:indicator:lambda_update}
        \STATE Compute 
        $
        \hat{\bm{\xi}} \in \mathop{\arg\max}\limits_{\bm{\xi} \in \Xi} \mathcal{L}_\mathcal{I}(\bm{x}, \bm{\xi}, \hat{\lambda})
        $
        \label{algo:wc:indicator:optimal_xi_computation}
        \STATE Compute 
        $
        \hat{\bm{y}} \in \mathop{\arg\min}\limits_{\bm{y} \in \mathcal{Y}} \left\{
        \bm{d}(\hat{\bm{\xi}})^\top \bm{y} + \hat{\lambda} \phi_\mathcal{I}(\bm{x}, \bm{y}, \hat{\bm{\xi}})
        :
        \bm{g}(\bm{x}, \bm{y}) \geq \bm{0}
        \right\}
        $
        \label{algo:wc:indicator:optimal_y_computation}
        \UNTIL {$\phi_\mathcal{I}(\bm{x}, \hat{\bm{y}}, \hat{\bm{\xi}}) = 0$}
        \ENDIF
    \end{algorithmic}
\end{algorithm}

Algorithms \ref{algo:wc:general} and \ref{algo:wc:indicator} appear quite similar to simply computing the maximum of the second-stage value functions over the uncertain parameters.
However, their main advantage is that unlike the latter, which require the solution of mixed-integer bilinear problems (refer to Section~\ref{sec:formulation:literature:bilinear}),
all intermediate optimization problems in lines \ref{algo:wc:closed_form:feasible_xi_computation}, \ref{algo:wc:closed_form:optimal_xi_computation} of Algorithm \ref{algo:wc:closed_form}, and in lines \ref{algo:wc:indicator:feasible_xi_computation}, \ref{algo:wc:indicator:optimal_xi_computation} of Algorithms \ref{algo:wc:general} and \ref{algo:wc:indicator} are relatively easier to solve.
To illustrate this, when the second-stage decisions are continuous ($\mathcal{Y} = \mathbb{R}^{n_2}_{+}$),
it can be readily verified using LP duality that these optimization problems can be reformulated as follows.
Here, we suppose that $\bm{h}(\bm{z}) = \bm{h}_0 + \bm{H}\bm{z}$
and $\bm{g}(\bm{x}, \bm{y}) = \bm{T} \bm{x} + \bm{W} \bm{y} - \bm{h}_0$.
\begin{gather}
    \sup_{\bm{\xi} \in \Xi} \mathcal{S}(\bm{x}, \bm{\xi})
    =
    \left[
    \begin{aligned}
        \mathop{\text{maximize}}_{\bm{\xi} \in \Xi, \bm{\mu} \in \mathbb{R}^m_{+}, \bm{\beta} \in \mathbb{R}^{n_p}_{+}} &
        (\bm{h}_0 - \bm{T}\bm{x})^\top \bm{\mu} +
        \one^\top (\bm{\xi} - \bm{\beta}) \\
        \text{subject to} \;\quad & \bm{W}^\top \bm{\mu} \leq \bm{0}, \;\; \bm{\mu} \leq \bm{e} \\
        & 2 \bm{\xi} - \bm{H}^\top \bm{\mu} - \bm{\beta} \geq \one.
    \end{aligned}
    \right]
    \label{eq:worst_case_problem_general_slack_duality} \tag{$\mathcal{WC}_\mathcal{S}$}
    \\
    \sup_{\bm{\xi} \in \Xi} \mathcal{L}(\bm{x}, \bm{\xi}, \lambda)
    =
    \left[
    \begin{aligned}
        \mathop{\text{maximize}}_{\bm{\xi} \in \Xi, \bm{\mu} \in \mathbb{R}^m_{+}, \bm{\beta} \in \mathbb{R}^{n_p}_{+}}
        &
        \bm{c}(\bm{\xi})^\top\bm{x} +
        (\bm{h}_0 - \bm{T}\bm{x})^\top \bm{\mu} +
        \one^\top (\lambda \bm{\xi} - \bm{\beta}) \\
        \text{subject to} \;\quad & \bm{W}^\top \bm{\mu} \leq \bm{d}(\bm{\xi}) \\
        & 2 \lambda\bm{\xi} - \bm{H}^\top \bm{\mu} - \bm{\beta} \geq \lambda\one.
    \end{aligned}
    \right]
    \label{eq:worst_case_problem_general_lagrangian_duality} \tag{$\mathcal{WC}_\mathcal{L}$}
    \notag
\end{gather}
\begin{gather}
    \sup_{\bm{\xi} \in \Xi} \mathcal{S}_\mathcal{I}(\bm{x}, \bm{\xi})
    =
    \left[
    \begin{aligned}
        \mathop{\text{maximize}}_{\bm{\xi} \in \Xi, \bm{\mu} \in \mathbb{R}^m_{+}} \;\; &
        (\bm{h}_0 - \bm{T}\bm{x})^\top ( \bm{\mu} - \bm{\delta}(\eye, \bm{\xi}) ) \\
        \text{subject to} \;\; & \bm{W}^\top \bm{\mu} \leq \bm{\delta}(\bm{W}, \bm{\xi}), \;\; \bm{\mu} \leq \bm{e}.
    \end{aligned}
    \right]
    \label{eq:worst_case_problem_indicator_slack_duality} \tag{$\mathcal{WC}_{\mathcal{S}_\mathcal{I}}$}
    \\
    \sup_{\bm{\xi} \in \Xi} \mathcal{L}_\mathcal{I}(\bm{x}, \bm{\xi}, \lambda)
    =
    \left[
    \begin{aligned}
        \mathop{\text{maximize}}_{\bm{\xi} \in \Xi, \bm{\mu} \in \mathbb{R}^m_{+}} \;\; &
        \bm{c}(\bm{\xi})^\top\bm{x} +
        (\bm{h}_0 - \bm{T}\bm{x})^\top ( \bm{\mu} - \lambda \bm{\delta}(\eye, \bm{\xi}) ) \\
        \text{subject to} \;\; & \bm{W}^\top \bm{\mu} \leq \bm{d}(\bm{\xi}) + \lambda \bm{\delta}(\bm{W}, \bm{\xi}).
    \end{aligned}
    \right]
    \label{eq:worst_case_problem_indicator_lagrangian_duality} \tag{$\mathcal{WC}_{\mathcal{L}_\mathcal{I}}$}
    \\
    \bm{\delta}(\bm{A}, \bm{\xi}) = \sum_{j \in [n_p]} \sum_{i \in \mathcal{I}_j^1} \xi_j \bm{A}^\top \one_i + \sum_{j \in [n_p]}\sum_{i \in \mathcal{I}_j^0} (1 - \xi_j) \bm{A}^\top \one_i, \;\; \bm{A} \in \mathbb{R}^{m \times n}, \; n \in \mathbb{Z}_{+}.
    \notag
\end{gather}

We note that all of the above problems \eqref{eq:worst_case_problem_general_slack_duality}, \eqref{eq:worst_case_problem_general_lagrangian_duality}, \eqref{eq:worst_case_problem_indicator_slack_duality} and \eqref{eq:worst_case_problem_indicator_lagrangian_duality}
are already in MILP form.
This is in contrast to \eqref{eq:worst_case_problem_duality} and \eqref{eq:worst_case_problem_kkt},
which are bilinear and need further reformulation before they can be cast as MILP problems.
On the one hand, this results in smaller optimization problems to be solved; see Table~\ref{table:formulation_sizes}.
On the other hand, it does not require any arbitrary big-M upper bounds on dual variables that can lead to suboptimal decisions. %
We note however that, as mentioned in the description of Algorithm~\ref{algo:wc:general},
it is crucial to choose
tight upper bounds $u(\bm{x})$ on the worst-case objective value of $\bm{x}$
and tight lower bounds $\ell(\bm{x})$ on the best-case objective value of $\bm{x}$
when computing the optimal multiplier in \ref{eq:two_stage_ro_general}.
This will impact numerical performance when solving \eqref{eq:worst_case_problem_general_lagrangian_duality}.

\subsection{Integration in Benders decomposition algorithm}

Algorithm~\ref{algo:benders:indicator} presents a Benders decomposition scheme to solve \ref{eq:two_stage_ro_indicator} when the second-stage decisions are continuous ($\mathcal{Y} = \mathbb{R}^{n_2}_{+}$).
Note that relatively complete recourse is not assumed.
The algorithm iteratively refines inner approximations of
$
\sup_{\lambda \geq 0}
\sup_{\bm{\xi} \in \Xi}
\mathcal{L}_\mathcal{I}(\bm{x}, \bm{\xi}, \lambda)
$
and
$
\sup_{\bm{\xi} \in \Xi}
\mathcal{S}_\mathcal{I}(\bm{x}, \bm{\xi}, \lambda)
$.
Indeed, the former is reflected in the objective function of the lower bounding problem~\eqref{eq:benders_indicator_lb_update},
where the set $\mathcal{O}$
enumerates the extreme points of the feasible region of problem~\eqref{eq:worst_case_problem_indicator_lagrangian_duality}.
Similarly, note that we have:
\[
\inf_{\bm{x} \in \mathcal{X}} \sup_{\bm{\xi} \in \Xi} \mathcal{L}_\mathcal{I}(\bm{x}, \bm{\xi}, \lambda)\\
=
\inf_{\bm{x} \in \mathcal{X}} \left\{
\max_{\bm{\xi} \in \Xi} \mathcal{L}_\mathcal{I}(\bm{x}, \bm{\xi}, \lambda) :
\mathcal{S}_\mathcal{I}(\bm{x}, \bm{\xi}) \leq 0  \;\; \forall \bm{\xi} \in \Xi
\right\}.
\]
In other words, the constraints of the lower bounding problem~\eqref{eq:benders_indicator_lb_update}, written for all points in $\mathcal{F}$, enumerate the extreme points of the feasible region of problem~\eqref{eq:worst_case_problem_indicator_slack_duality}.
In Benders terminology, the sets $\mathcal{O}$ and $\mathcal{F}$ corresponding to feasibility and optimality cuts, respectively.

\begin{algorithm}[!htb]
    \caption{Benders decomposition scheme along with Algorithm~\ref{algo:wc:indicator} to solve \ref{eq:two_stage_ro_indicator} when $\mathcal{Y} = \mathbb{R}^{n_2}_{+}$ and $\bm{g}(\bm{x}, \bm{y}) = \bm{T} \bm{x} + \bm{W} \bm{y} - \bm{h}_0$}
    \label{algo:benders:indicator}
    \begin{algorithmic}[1]
        \renewcommand{\algorithmicrequire}{\textbf{Input:}}
        \renewcommand{\algorithmicensure}{\textbf{Output:}}
        \REQUIRE Problem \ref{eq:two_stage_ro_indicator}, tolerance $\epsilon > 0$
        \ENSURE $\bm{x}^\star$ (an optimal solution of \ref{eq:two_stage_ro_indicator} if it is feasible or $\emptyset$ otherwise)
        \STATE Initialize $LB = -\infty$, $UB = +\infty$, $\bm{x}^\star = \emptyset$, $\mathcal{O} = \emptyset$, $\mathcal{F} = \emptyset$, ${\lambda} = 1$.
        \REPEAT
        \STATE Set $LB$ as the optimal objective value of the problem:
        \begin{equation}\label{eq:benders_indicator_lb_update}
            \begin{aligned}
                \mathop{\text{minimize}}_{\bm{x} \in \mathcal{X}} \;\; & %
                \sup_{(\bm{\xi}, \bm{\mu}) \in \mathcal{O}} \left\{\bm{c}(\bm{\xi})^\top\bm{x} +
                ( \bm{\mu} - {{\lambda}} \bm{\delta}(\eye, \bm{\xi}) )^\top (\bm{h}_0 - \bm{T}\bm{x}) \right\}  \\ %
                \text{subject to} \;\;& ( \bm{\mu} - \bm{\delta}(\eye, \bm{\xi}) )^\top (\bm{h}_0 - \bm{T}\bm{x}) \leq 0, \; (\bm{\xi}, \bm{\mu}) \in \mathcal{F}.
            \end{aligned}
        \end{equation}
        \label{algo:benders:indicator:lb-update}
        \IF {$LB < +\infty$}
        \STATE Set $\hat{\bm{x}}$ as an optimal solution of problem \eqref{eq:benders_indicator_lb_update}
        \STATE \multiline{Compute $\hat{\bm{\xi}}$, $\hat{\lambda}$ by using Algorithm~\ref{algo:wc:indicator} (with inputs $\hat{\bm{x}}$, $\lambda$) and executing its lines~\ref{algo:wc:indicator:feasible_xi_computation} and~\ref{algo:wc:indicator:optimal_xi_computation} by computing optimal solutions $(\hat{\bm{\xi}}, \hat{\bm{\mu}})$ of problems~\ref{eq:worst_case_problem_indicator_slack_duality} (at $\bm{x} = \hat{\bm{x}}$) and~\ref{eq:worst_case_problem_indicator_lagrangian_duality} (at $\bm{x} = \hat{\bm{x}}$ and $\lambda = \hat{\lambda}$), respectively.}
        \IF {$\mathcal{S}_\mathcal{I}(\hat{\bm{x}}, \hat{\bm{\xi}}) > 0$}
        \STATE Update $\mathcal{F} \gets \mathcal{F} \cup \{(\hat{\bm{\xi}}, \hat{\bm{\mu}})\}$
        \ELSE
        \STATE Update $\mathcal{O}\gets \mathcal{O} \cup \{(\hat{\bm{\xi}}, \hat{\bm{\mu}})\}$.
        \STATE Update $\lambda \gets \hat{\lambda}$
        \IIf {$UB > \mathcal{L}_\mathcal{I}(\hat{\bm{x}}, \hat{\bm{\xi}}, \hat{\lambda})$}
        update
        $UB \gets \mathcal{L}_\mathcal{I}(\hat{\bm{x}}, \hat{\bm{\xi}}, \hat{\lambda})$ and
        $\bm{x}^\star \gets \hat{\bm{x}}$.
        \EndIIf
        \ENDIF
        \ELSE
        \STATE Set $\hat{\bm{x}} = \emptyset$
        \ENDIF
        \UNTIL {$UB - LB \leq \epsilon$ or $LB = +\infty$}
    \end{algorithmic}
\end{algorithm}

It is crucial to note that the algorithm implicitly exploits the strong duality result in Theorem~\ref{theorem:strong_duality_two_stage_problem}.
Indeed, since we have:
\[
\inf_{\bm{x} \in \mathcal{X}} \sup_{\bm{\xi} \in \Xi} \mathcal{Q}_\mathcal{I}(\bm{x}, \bm{\xi}, \lambda)
\geq
\inf_{\bm{x} \in \mathcal{X}} \sup_{\bm{\xi} \in \Xi} \mathcal{L}_\mathcal{I}(\bm{x}, \bm{\xi}, \lambda),
\;\; \lambda \geq 0,
\]
problem~\eqref{eq:benders_indicator_lb_update} provides a rigorous lower bound on the optimal value of \ref{eq:two_stage_ro_indicator}, even though $(\hat{\bm{\xi}}, \hat{\bm{\mu}})$ may correspond to extreme points of \eqref{eq:worst_case_problem_indicator_lagrangian_duality} for potentially different values of $\lambda$.
The algorithm terminates finitely since Algorithm~\ref{algo:wc:indicator} also terminates in a finite number of iterations, only a finite number of trial values of $\lambda$ are enumerated, and because the feasible regions of \eqref{eq:worst_case_problem_indicator_lagrangian_duality} and \eqref{eq:worst_case_problem_indicator_slack_duality} have a finite number of extreme points for any $\lambda \geq 0$.
Note that if the problem is known to satisfy relatively complete recourse, then $\mathcal{F} = \emptyset$ is guaranteed in all iterations of the algorithm.

Since Algorithm~\ref{algo:wc:indicator} computes a Lagrange multiplier that only satisfies the necessary optimality conditions in Theorem~\ref{theorem:necessary_optimality_condition}, it is possible that Algorithm~\ref{algo:benders:indicator} outputs a solution whose worst-case objective value is estimated to be less than the optimal value of \ref{eq:two_stage_ro_indicator}.
We can verify if this happens to be the case at the end of algorithm by solving a mixed-integer bilinear variant of~\eqref{eq:worst_case_problem_indicator_lagrangian_duality}
where $\lambda \geq 0$ is also a decision variable.
Here, the bilinear products $\lambda \bm{\xi}$ can be reformulated by adding variables $\bm{\rho} \in \mathbb{R}_{+}^{n_p}$ and indicator constraints:
\begin{alignat*}{3}
    \xi_j = 0 &\implies \rho_j = 0, &\; j \in [n_p], \\
    \xi_j = 1 &\implies \rho_j = \lambda, &\; j \in [n_p].
\end{alignat*}
If the optimal value of this problem is larger than predicted ($UB$), then we can use its optimal solution $\lambda$ to initialize another run of the algorithm with an appropriately updated $UB$.
In doing so, we can retain the feasibility and optimality sets, $\mathcal{F}$ and $\mathcal{O}$, and do not have to re-initialize them to be empty sets.
We emphasize that if Algorithm~\ref{theorem:necessary_optimality_condition} outputs the true optimal multiplier $\lambda$, then the bilinear problem has to be solved only at most once (because of the strong duality result in Theorem~\ref{theorem:strong_duality_two_stage_problem}).

Finally, we note that the corresponding Benders scheme for \ref{eq:two_stage_ro_general} is similar with the difference that it enumerates the extreme points of \eqref{eq:worst_case_problem_general_lagrangian_duality} and \eqref{eq:worst_case_problem_general_slack_duality} instead.
Also, the worst-case parameter realizations can be computed using Algorithm~\ref{algo:wc:closed_form} whenever the conditions therein are satisfied.

\subsection{Integration in column-and-constraint generation algorithm}

Algorithm~\ref{algo:ccg:indicator} presents a column-and-constraint generation scheme for solving problem \ref{eq:two_stage_ro_indicator} without assuming relatively complete recourse.
It is quite similar to Algorithm~\ref{algo:ccg} for solving problem \ref{eq:two_stage_ro_general}.
The main difference is in the formulation of the lower bounding problem~\eqref{eq:ccg_indicator_lb_update}, where we note the absence of indicator constraints, since all constraints are unconditionally known for each parameter realization recorded in the set $\mathcal{R}$.
We also note that old values of the Lagrange multipliers $\lambda$ are used as inputs to Algorithm~\ref{algo:wc:indicator} so that $\lambda$ is non-decreasing across algorithmic iterations.
The algorithm terminates finitely since Algorithm~\ref{algo:wc:indicator} also terminates in a finite number of iterations and $\Xi$ is a finite set.
The corresponding column-and-constraint generation scheme for \ref{eq:two_stage_ro_general} is similar with the difference that the worst-case parameter realizations are computed using Algorithm~\ref{algo:wc:general}, or using Algorithm~\ref{algo:wc:closed_form} whenever applicable.

\begin{algorithm}[!htbp]
    \caption{Column-and-constraint generation scheme with Algorithm~\ref{algo:wc:indicator} to solve \ref{eq:two_stage_ro_indicator}}
    \label{algo:ccg:indicator}
    \begin{algorithmic}[1]
        \renewcommand{\algorithmicrequire}{\textbf{Input:}}
        \renewcommand{\algorithmicensure}{\textbf{Output:}}
        \REQUIRE Problem \ref{eq:two_stage_ro_indicator}, tolerance $\epsilon > 0$
        \ENSURE $\bm{x}^\star$ (an optimal solution of \ref{eq:two_stage_ro_indicator} if it is feasible or $\emptyset$ otherwise)
        \STATE Initialize $LB = -\infty$, $UB = +\infty$, $\bm{x}^\star = \emptyset$, $\mathcal{R} = \emptyset$, ${\lambda} = 1$.
        \REPEAT
        \STATE Set $LB$ as the optimal objective value of the problem:
        \begin{equation}\label{eq:ccg_indicator_lb_update}
            \inf_{\bm{x}, \eta, \bm{y}} \left\{
            \eta :
            \begin{aligned}
                & \bm{x} \in \mathcal{X}, \;\; \eta \in \mathbb{R}, \;\; \bm{y}^{(\bm{\xi})} \in \mathcal{Y}, \; \bm{\xi} \in \mathcal{R}, \\
                & \eta \geq \bm{c}(\bm{\xi})^\top\bm{x} + \bm{d}(\bm{\xi})^\top\bm{y}^{(\bm{\xi})}, \; \bm{\xi} \in \mathcal{R}, \\
                & \bm{g}\big(\bm{x}, \bm{y}^{(\bm{\xi})} \big) \geq \bm{0}, \; \bm{\xi} \in \mathcal{R}, \\
                & g_i\big(\bm{x}, \bm{y}^{(\bm{\xi})} \big) = 0, \;\; i \in \mathcal{I}_j^0, \; j \in [n_p]: \xi_j = 0, \; \bm{\xi} \in \mathcal{R}, \\
                & g_i\big(\bm{x}, \bm{y}^{(\bm{\xi})} \big) = 0, \;\; i \in \mathcal{I}_j^1, \; j \in [n_p]: \xi_j = 1, \; \bm{\xi} \in \mathcal{R}.
            \end{aligned}
            \right\}
        \end{equation}
        \label{algo:ccg:indicator:lb-update}
        \IF {$LB < +\infty$}
        \STATE Set $\hat{\bm{x}}$ as an optimal (projected) solution of problem \eqref{eq:ccg_indicator_lb_update}
        \STATE Compute $\hat{\bm{\xi}}$, $\hat{\lambda}$ by using Algorithm~\ref{algo:wc:indicator} (with inputs $\hat{\bm{x}}$, $\lambda$)
        \label{algo:ccg:indicator:worst-case-computation}
        \STATE Update $\mathcal{R} \gets \mathcal{R} \cup \{\hat{\bm{\xi}}\}$ and $\lambda \gets \hat{\lambda}$
        \IIf {$UB > \mathcal{L}_\mathcal{I}(\hat{\bm{x}}, \hat{\bm{\xi}}, \hat{\lambda})$}
        update
        $UB \gets \mathcal{L}_\mathcal{I}(\hat{\bm{x}}, \hat{\bm{\xi}}, \hat{\lambda})$ and
        $\bm{x}^\star \gets \hat{\bm{x}}$.
        \EndIIf
        \ELSE
        \STATE Set $\hat{\bm{x}} = \emptyset$
        \ENDIF
        \UNTIL {$UB - LB \leq \epsilon$ or $LB = +\infty$}
    \end{algorithmic}
\end{algorithm}

We re-emphasize a few key points from the previous subsection that apply here as well.
First, Algorithm~\ref{algo:ccg:indicator} rigorously identifies if a solution (or the problem instance itself) is infeasible.
Second, it computes rigorous lower bounds $LB$ on the optimal value of \ref{eq:two_stage_ro_indicator}.
Finally, it is possible that the scheme terminates at a solution whose worst-case objective value is estimated to be less than the optimal value of \ref{eq:two_stage_ro_indicator}, since Algorithm~\ref{algo:wc:indicator} computes a Lagrange multiplier that only satisfies the necessary optimality conditions in Theorem~\ref{theorem:necessary_optimality_condition}.
At the end of the algorithm, we can verify if this happens to be the case by solving the true problem $\mathcal{Q}_\mathcal{I}(\bm{x}, \bm{\xi})$.
If it happens to be larger than $UB$, we can keep increasing $\lambda$ (at an exponential rate) in Algorithm~\ref{algo:wc:indicator} until we observe an increase in the value of $\mathcal{L}_\mathcal{I}(\bm{x}, \bm{\xi}, \lambda)$ and the necessary optimality conditions are satisfied.
We can then continue the column-and-constraint generation Algorithm~\ref{algo:ccg:indicator} after updating $UB$ but retaining $LB$ and $\mathcal{R}$ without re-intializing them.
Note that if Algorithm~\ref{theorem:necessary_optimality_condition} outputs the true optimal multiplier $\lambda$, then the true problem $\mathcal{Q}_\mathcal{I}(\bm{x}, \bm{\xi})$ has to be solved only at most once (because of the strong duality result in Theorem~\ref{theorem:strong_duality_two_stage_problem}).
Indeed, this was the case in all our experiments, which makes us suspect that the necessary conditions in Theorem~\ref{theorem:necessary_optimality_condition} are almost sufficient (although one can construct counter-examples).

When the second-stage decisions are continuous ($\mathcal{Y} = \mathbb{R}^{n_2}_{+}$), 
the intermediate optimization problems in lines \ref{algo:wc:indicator:feasible_xi_computation} and \ref{algo:wc:indicator:optimal_xi_computation} of Algorithm~\ref{algo:wc:indicator} can be computed by solving problems \ref{eq:worst_case_problem_indicator_slack_duality} and \ref{eq:worst_case_problem_indicator_lagrangian_duality}, similar to the Benders scheme.
When the second-stage decisions are mixed-integer, \cite{zeng2013solving} have proposed an inner-level column-and-constraint generation algorithm under the assumption of extended relatively complete recourse, which we shall define shortly.
In the following, we briefly describe how our method can be integrated in this inner-level algorithm and without requiring this assumption.
We only present it for the general formulation \ref{eq:two_stage_ro_general} assuming that the conditions of Theorem~\ref{theorem:closed_form_lu} are satisfied so that Algorithm~\ref{algo:wc:closed_form} is applicable.
The extension toward instances of \ref{eq:two_stage_ro_general} and \ref{eq:two_stage_ro_indicator} without these conditions is straightforward but requires additional notation, and so we omit it in the interest of brevity.

Consider the following instance of problem \ref{eq:two_stage_ro_general} with both continuous ($\bm{y}_\mathrm{c}$) and discrete ($\bm{y}_\mathrm{d}$) second-stage decisions.
\begin{equation}\label{eq:two_stage_ro_discrete}
    \begin{aligned}
        &\inf_{\bm{x} \in \mathcal{X}} \sup_{\bm{\xi} \in \Xi} \, \mathcal{Q}(\bm{x}, \bm{\xi}), \\
        & %
        \mathcal{Q}(\bm{x}, \bm{\xi}) = 
        \left[
        \begin{aligned}
            \mathop{\text{minimize}}_{\bm{y} \in \mathcal{Y}} \;\; & \bm{c}(\bm{\xi})^\top \bm{x}  + \bm{d}_\mathrm{c}(\bm{\xi})^\top \bm{y}_\mathrm{c}  + \bm{d}_\mathrm{d}(\bm{\xi})^\top \bm{y}_\mathrm{d} \\
            \text{subject to} \; & \bm{T}\bm{x} + \bm{W}_\mathrm{c} \bm{y}_\mathrm{c} + \bm{W}_\mathrm{d} \bm{y}_\mathrm{d} \geq \bm{h}(\bm{\xi}) \\
            & \bm{y} = (\bm{y}_\mathrm{c}, \bm{y}_\mathrm{d}) \in \mathbb{R}^{nc_2} \times \mathcal{Y}_\mathrm{d}
        \end{aligned}
        \right],
    \end{aligned}
\end{equation}
where $\mathcal{Y} = \mathbb{R}^{nc_2} \times \mathcal{Y}_\mathrm{d}$ and $\mathcal{Y}_\mathrm{d} \subseteq \mathbb{Z}^{nd_2}$ is a compact MILP representable set.

Before we present the algorithm for computing worst-case parameter realizations,
we note that the Lagrangian and constraint violation functions are exactly the same as in the case of problem \ref{eq:two_stage_ro_general} that we presented in Section~\ref{sec:reformulation}.
For example, the former is defined as follows:
\begin{align*}
    \mathcal{L}(\bm{x}, \bm{\xi}, \lambda)
    &=\left[
    \begin{aligned}
        \mathop{\text{minimize}}_{\bm{y} \in \mathcal{Y}, \bm{z} \in \mathbb{R}^{n_p}_{+}} \;\; &
        \bm{c}(\bm{\xi})^\top \bm{x}  + \bm{d}_\mathrm{c}(\bm{\xi})^\top \bm{y}_\mathrm{c}  + \bm{d}_\mathrm{d}(\bm{\xi})^\top \bm{y}_\mathrm{d} + \lambda \phi(\bm{z}, \bm{\xi}) \\
        \text{subject to} \;\; & \bm{T}\bm{x} + \bm{W}_\mathrm{c} \bm{y}_\mathrm{c} + \bm{W}_\mathrm{d} \bm{y}_\mathrm{d} \geq \bm{h}(\bm{z}), \;\; \bm{z} \leq \bm{e}.
    \end{aligned}
    \right]
\end{align*}
with $\mathcal{S}(\bm{x}, \bm{\xi})$ defined similarly.
The key observation in~\cite{zeng2013solving} is to note that the worst-case values of the Lagrangian and constraint violation functions (over $\bm{\xi} \in \Xi$) can be reformulated as semi-infinite programs.
For example, we can reformulate $\sup_{\bm{\xi} \in \Xi} \mathcal{L}(\bm{x}, \bm{\xi}, \lambda)$ as follows:
\begin{gather*}
    \sup_{\bm{\xi} \in \Xi, \eta \in \mathbb{R}} \left\{
    \eta :
    \eta \leq \left[
    \begin{aligned}
        & \bm{c}(\bm{\xi})^\top \bm{x}  + \bm{d}_\mathrm{d}(\bm{\xi})^\top \bm{y}_\mathrm{d} \\
        & + \inf_{(\bm{y}_\mathrm{c}, \bm{z}) \in \Pi(\bm{x}, \bm{y}_\mathrm{d})} \left\{ \bm{d}_\mathrm{c}(\bm{\xi})^\top \bm{y}_\mathrm{c}  + \lambda \phi(\bm{z}, \bm{\xi}) \right\}
    \end{aligned}
    \right], \bm{y}_\mathrm{d} \in  \mathcal{Y}_\mathrm{d}
    \right\}
    \\
    =\sup_{\bm{\xi} \in \Xi, \eta \in \mathbb{R}} \left\{
    \eta :
    \eta \leq \left[
    \begin{aligned}
        & \bm{c}(\bm{\xi})^\top \bm{x}  + \bm{d}_\mathrm{d}(\bm{\xi})^\top \bm{y}_\mathrm{d} \\
        & + \sup_{(\bm{\mu}, \bm{\beta}) \in \Delta(\bm{\xi}, \lambda, 1)} \left\{ \big[\bm{h}_0 - \bm{T}\bm{x} - \bm{W}_\mathrm{d} \bm{y}_\mathrm{d}\big]^\top \bm{\mu}  + \one^\top (\lambda \bm{\xi} - \bm{\beta}) \right\}
    \end{aligned}
    \right], \bm{y}_\mathrm{d} \in  \mathcal{Y}_\mathrm{d}
    \right\}
    \\
    \Pi(\bm{x}, \bm{y}_\mathrm{d}) = \big\{
    (\bm{y}_\mathrm{c}, \bm{z}) \in \mathbb{R}_{+}^{nc_2} \times [0, 1]^{n_p} :
    \bm{W}_\mathrm{c} \bm{y}_\mathrm{c}  - \bm{H} \bm{z} \geq \bm{h}_0 - \bm{T}\bm{x} - \bm{W}_\mathrm{d} \bm{y}_\mathrm{d}
    \big\} \\
    \Delta(\bm{\xi}, \lambda, \kappa) = \big\{
    (\bm{\mu}, \bm{\beta}) \in \mathbb{R}^m_{+} \times \mathbb{R}^{n_p}_{+}:
    \bm{W}^\top \bm{\mu} \leq \kappa \bm{d}_\mathrm{c}(\bm{\xi}), \; 2 \lambda\bm{\xi} - \bm{H}^\top \bm{\mu} - \bm{\beta} \geq \lambda\one
    \big\}
\end{gather*}
and $\sup_{\bm{\xi} \in \Xi}  \mathcal{S}(\bm{x}, \bm{\xi})$ can be reformulated similarly.

Algorithm~\ref{algo:ccg:inner} presents the computation of the worst-case parameter realization in the presence of mixed-integer second-stage decisions $\bm{y}_\mathrm{d}$ in problem~\eqref{eq:two_stage_ro_discrete} assuming that it satisfies the conditions in Theorem~\ref{theorem:closed_form_lu}.
We note that it does not rely on the extended relatively complete recourse assumption, which requires the second-stage problem to be feasible for any fixed values of the first-stage decisions $\bm{x} \in \mathcal{X}$, uncertain parameters $\bm{\xi} \in \Xi$ and discrete-valued second-stage decisions $\bm{y}_\mathrm{d} \in \mathcal{Y}_\mathrm{d}$.
The key idea of the algorithm is to solve the above semi-infinite reformulation by enumerating $\mathcal{Y}_\mathrm{d}$ in the set $\mathcal{D}$.
Indeed, it adds a full set of dual variables $(\bm{\mu}^{(\bm{y}_\mathrm{d})}, \bm{\beta}^{(\bm{y}_\mathrm{d})})$ and associated constraints for each identified $\bm{y}_\mathrm{d} \in \mathcal{D}$ and thus obtains upper bounds on $\sup_{\bm{\xi} \in \Xi}  \mathcal{S}(\bm{x}, \bm{\xi})$ and $\sup_{\bm{\xi} \in \Xi}  \mathcal{L}(\bm{x}, \bm{\xi}, \lambda)$ in lines \ref{algo:ccg:inner:feasibility:ub-update} and \ref{algo:ccg:inner:optimality:ub-update}, respectively.
Note that the optimization problems in these lines can be solved with any MILP solver.
The algorithm terminates in a finite number of iterations because $\mathcal{Y}_\mathrm{d}$ is a finite set.

\begin{algorithm}[!htb]
    \caption{Computation of worst-case parameter realization for \eqref{eq:two_stage_ro_discrete} using the inner-level column-and-constraint generation scheme and Algorithm~\ref{algo:wc:closed_form}}
    \label{algo:ccg:inner}
    \begin{algorithmic}[1]
        \renewcommand{\algorithmicrequire}{\textbf{Input:}}
        \renewcommand{\algorithmicensure}{\textbf{Output:}}
        \REQUIRE $\bm{x} \in \mathcal{X}$ %
        \ENSURE $\hat{\bm{\xi}} \in \mathop{\arg\max}\limits_{\bm{\xi} \in \Xi} \mathcal{Q}(\bm{x}, \bm{\xi})$ and $\hat{\lambda} \in \mathop{\arg\max}\limits_{\lambda \in \mathbb{R}_{+}} \bigg\{\sup\limits_{\bm{\xi} \in \Xi} \mathcal{L}(\bm{x}, \bm{\xi}, \lambda)\bigg\}$
        \STATE Initialize $\hat{\bm{\xi}} \in \Xi$ (arbitrary), $LB = -\infty$, $UB = +\infty$, $\mathcal{D} = \emptyset$, $\hat{\lambda} = 1$
        \REPEAT
        \STATE Set $UB$ and $\hat{\bm{\xi}}$ as the optimal objective value and (projected) solution of the problem:
        \begin{equation*}%
            \inf_{\bm{x}, \eta, \bm{y}} \left\{
            \eta :
            \begin{aligned}
                & \bm{\xi} \in \Xi, \;\; \eta \in \mathbb{R}, \;\; (\bm{\mu}^{(\bm{y}_\mathrm{d})}, \bm{\beta}^{(\bm{y}_\mathrm{d})}) \in \Delta(\bm{\xi}, 1, 0), \;\; \bm{\mu}^{(\bm{y}_\mathrm{d})} \leq \one, \; \bm{y}_\mathrm{d} \in \mathcal{D} \\
                & \eta \leq \big(\bm{h}_0 - \bm{T}\bm{x} - \bm{W}_\mathrm{d} \bm{y}_\mathrm{d}\big)^\top \bm{\mu}^{(\bm{y}_\mathrm{d})}  + \one^\top (\bm{\xi} - \bm{\beta}^{(\bm{y}_\mathrm{d})}), \; \bm{y}_\mathrm{d} \in \mathcal{D} %
            \end{aligned}
            \right\}
        \end{equation*}
        \label{algo:ccg:inner:feasibility:ub-update}
        \STATE Set 
        $
        (\hat{\bm{y}}, \hat{\bm{z}}, \hat{\bm{\sigma}}) \in \mathop{\arg\min}\limits_{(\bm{y}, \bm{z}, \bm{\sigma}) \in \mathcal{Y} \times [0, 1]^{n_p} \times \mathbb{R}^m_{+}}
        \big\{\one^\top \bm{\sigma} + \phi(\bm{z}, \hat{\bm{\xi}}) : \bm{T}\bm{x} + \bm{W}_\mathrm{c} \bm{y}_\mathrm{c} + \bm{W}_\mathrm{d} \bm{y}_\mathrm{d} + \bm{\sigma} \geq \bm{h}(\bm{z}) \big\}
        $
        \label{algo:ccg:inner:feasibility:lb-update}
        \STATE Update $\mathcal{D} \gets \mathcal{D} \cup \{\hat{\bm{y}}_\mathrm{d}\}$.
        \STATE Update $LB = \one^\top \hat{\bm{\sigma}} + \phi(\hat{\bm{z}}, \hat{\bm{\xi}})$
        \UNTIL {$LB > 0$ or $UB = 0$}
        \IF {$UB = 0$}
        \STATE Compute $\ell(\bm{x})$ and $u(\bm{x})$ as per Theorem~\ref{theorem:optimal_multiplier_general}
        \STATE Set $\hat{\lambda} = u(\bm{x}) - \ell(\bm{x})$ and $LB = -\infty$
        \REPEAT
        \STATE Set $UB$ and $\tilde{\bm{\xi}}$ as the optimal objective value and (projected) solution of the problem: 
        \begin{equation*}%
            \inf_{\bm{x}, \eta, \bm{y}} \left\{
            \eta :
            \begin{aligned}
                & \bm{\xi} \in \Xi, \;\; \eta \in \mathbb{R}, \;\; (\bm{\mu}^{(\bm{y}_\mathrm{d})}, \bm{\beta}^{(\bm{y}_\mathrm{d})}) \in \Delta(\bm{\xi}, \hat{\lambda}, 1), \; \bm{y}_\mathrm{d} \in \mathcal{D} \\
                & \eta \leq \bm{c}(\bm{\xi})^\top\bm{x} + \bm{d}_\mathrm{d}(\bm{\xi})^\top \bm{y}_\mathrm{d} + \big(\bm{h}_0 - \bm{T}\bm{x} - \bm{W}_\mathrm{d} \bm{y}_\mathrm{d}\big)^\top \bm{\mu}^{(\bm{y}_\mathrm{d})}  + \one^\top (\hat{\lambda} \bm{\xi} - \bm{\beta}^{(\bm{y}_\mathrm{d})}), \; \bm{y}_\mathrm{d} \in \mathcal{D} %
            \end{aligned}
            \right\}
        \end{equation*}
        \label{algo:ccg:inner:optimality:ub-update}
        \STATE Compute 
        $
        \hat{\bm{y}} \in \mathop{\arg\min}\limits_{\bm{y} \in \mathcal{Y}}
        \big\{\bm{d}_\mathrm{c}(\tilde{\bm{\xi}})^\top \bm{y}_\mathrm{c}  + \bm{d}_\mathrm{d}(\tilde{\bm{\xi}})^\top \bm{y}_\mathrm{d} : \bm{T}\bm{x} + \bm{W}_\mathrm{c} \bm{y}_\mathrm{c} + \bm{W}_\mathrm{d} \bm{y}_\mathrm{d} \geq \bm{h}(\tilde{\bm{\xi}}) \big\}
        $
        \STATE Update $\mathcal{D} \gets \mathcal{D} \cup \{\hat{\bm{y}}_\mathrm{d}\}$.
        \IIf {$LB < \bm{d}_\mathrm{c}(\tilde{\bm{\xi}})^\top \hat{\bm{y}}_\mathrm{c}  + \bm{d}_\mathrm{d}(\tilde{\bm{\xi}})^\top \hat{\bm{y}}_\mathrm{d}$}
        $LB \gets \bm{d}_\mathrm{c}(\tilde{\bm{\xi}})^\top \hat{\bm{y}}_\mathrm{c}  + \bm{d}_\mathrm{d}(\tilde{\bm{\xi}})^\top \hat{\bm{y}}_\mathrm{d}$,
        $\hat{\bm{\xi}} \gets \tilde{\bm{\xi}}$
        \EndIIf
        \label{algo:ccg:inner:optimality:lb-update}
        \UNTIL {$UB - LB \leq \epsilon$}
        \ENDIF
    \end{algorithmic}
\end{algorithm}

\section{Numerical Experiments}\label{sec:experiments}

The primary goal of our numerical experiments is to measure the computational benefits of using the Lagrangian formulation over traditional methods in computing worst-case parameter realizations, whereas the secondary goal is to assess its impacts on the run time of the overall exact method.
To that end, we consider three different applications: network design with uncertain edge failures, facility location with random facility disruptions, and staff rostering under demand uncertainty.
Collectively, they represent instances of both \ref{eq:two_stage_ro_general} and \ref{eq:two_stage_ro_indicator}, feature the presence and absence of relatively complete recourse as well as continuous and mixed-integer second-stage decisions.
Notably, all of these applications have been previously studied in the literature and problem-specific column-and-constraint generation methods have been developed for their solution.

The experiments were implemented in Julia using JuMP~\cite{DunningHuchetteLubin2017} as the modeling language and Gurobi~9.1~\cite{gurobi} (with default options) as the LP and MILP solver.
All runs were conducted on an Intel~Xeon~2.3 GHz computer, with a limit of eight cores per run.
Our code is available at \url{https://github.com/AnirudhSubramanyam/RobustOptLagrangianDual.jl}.

\subsection{Network design with uncertain edge failures}
We first consider the network design problem that was studied in~\cite{matthews2019designing}.
Consider a network with nodes $V$ and edges $E$.
Each node $i \in V$ has degree $D_i$ and it is associated with a (positive or negative) demand $b_i \in \mathbb{R}$ for a single commodity.
The origin and destination nodes of each edge $e \in E$ are denoted by $o(e) \in V$ and $d(e) \in V$, respectively,
whereas its pre-installed capacity is denoted by $p_e \in \mathbb{R}_{+}$.
Each edge may fail randomly and the uncertain parameter $\xi_e \in \{0, 1\}$ indicates whether the edge $e$ has failed.
The goal is to decide (before any failures occur) the amount of additional capacity $x_e \in \mathbb{R}_{+}$ that must be installed for each edge $e \in E$,
so that all nodal demands can be satisfied using feasible forward and reverse edge flows, denoted $y^f_e \in \mathbb{R}_{+}$ and $y^b_e \in \mathbb{R}_{+}$, respectively, under any postulated realization of the edge failures $\bm{\xi} \in \Xi$.
This problem can be formulated as the following instance of \ref{eq:two_stage_ro_indicator}.
\begin{equation*}
    \begin{aligned}
        &\inf_{\bm{x} \in \mathbb{R}_{+}^{|E|}} \sup_{\bm{\xi} \in \Xi} \, \mathcal{Q}_\mathcal{I}(\bm{x}, \bm{\xi}), \\
        & %
        \mathcal{Q}_\mathcal{I}(\bm{x}, \bm{\xi}) = 
        \left[\begin{aligned}
            \mathop{\text{minimize}}_{\bm{y}^f, \bm{y}^b \in \mathbb{R}^{|E|}_{+}} \;\; & (\bm{c}^x)^\top \bm{x}  + (\bm{c}^y)^\top (\bm{y}^f + \bm{y}^b)  \\
            \text{subject to} \;\; &  \smashoperator{\sum_{e \in E: d(e) = i}} \left(y^f_e - y^b_e\right) + \smashoperator{\sum_{e \in E: o(e) = i}} \left(y^b_e - y^f_e\right) = b_i, \;\; i \in V \\
            & \bm{y}^f \leq \bm{x} + \bm{p}, \;\; \bm{y}^b \leq \bm{x} + \bm{p}, \\
            & \xi_e = 1 \implies y^f_e = 0, \;\; e \in E \\
            & \xi_e = 1 \implies y^b_e = 0, \;\; e \in E
        \end{aligned}\right],
    \end{aligned}
\end{equation*} 
where for each edge $e \in E$, $c_e^x \in \mathbb{R}$ and $c_e^y \in \mathbb{R}$ represent the per-unit capacity installation and transportation costs, respectively.
The uncertainty set is parameterized by a budget of uncertainty $k \geq 0$, and defined as follows:
\begin{equation*}
    \Xi = \left\{
    \bm{\xi} \in \{0, 1\}^{|E|}: \one^\top \bm{\xi} \leq k, \;
    {\sum_{e \in E: d(e) = i}} \xi_e + {\sum_{e \in E: o(e) = i}} \xi_e \leq D_i - 1, \; i \in V_T
    \right\},
\end{equation*}
where $V_T \subseteq V$ denotes the subset of terminal nodes; that is, those with non-zero demands.
Thus, the uncertainty set models the simultaneous failure of up to $k$ edges at any moment.
We refer readers to \cite{matthews2019designing} for the motivation behind the choice of this uncertainty set, the larger context of the network design problem, as well as additional details about the problem formulation.

In \cite{matthews2019designing}, the authors use a big-M representation of the indicator constraints as follows:
$y^f_e, y^b_e \leq (x_e + p_e)(1 - \xi_e)$ for each $e \in E$.
Nevertheless, we retain the indicators since our method can exploit that structure.
Also, we note that this problem does not satisfy the relatively complete recourse assumption.

We conduct experiments on the same set of instances as \cite{matthews2019designing} with the caveat that we do not preprocess the original network superstructure as described in Appendix~B of that paper.
Also, we set $c_e^x/c_e^y = 1$ for all $e \in E$, whereas \cite{matthews2019designing} study a range of different values for this ratio (including the one that we consider).
Apart from these differences, we benchmark our method against the one proposed in \cite{matthews2019designing}, which is a variant of the original column-and-constraint generation algorithm proposed in \cite{zhao2012exact}.
Specifically, \cite{matthews2019designing} modify the original algorithm to account for the lack of relatively complete recourse and they use the duality-based mixed-integer bilinear formulation \eqref{eq:worst_case_problem_duality} for computing worst-case parameter realizations; the latter is further reformulated using additional variables and indicator constraints (see Section~\ref{sec:formulation:literature:bilinear}) and solved using an MILP solver.
We benchmark (our implementation) of this method against Algorithm~\ref{algo:ccg:indicator}, which uses Algorithm~\ref{algo:wc:indicator} for computing worst-case parameter realizations.
In both cases, we set a total time limit of 1~hour.

Table~\ref{table:network} summarizes and compares the computational performance of the existing algorithm in \cite{matthews2019designing} against our proposed method for three representative test instances, since all other instances are solved in less than two minutes on average.
In this table, the columns \# It., $t_\text{tot}$ and $t^\text{avg}_\text{wc}$ report the total number of iterations of the column-and-constraint generation algorithm, the total run time of the algorithm, and the average time to compute worst-case parameter realizations, respectively.
The last column $t^\text{speedup}_\text{wc}$ reports the average speedup with respect to the time for computing worst-case realizations and it is simply the ratio of the corresponding $t^\text{avg}_\text{wc}$ values.
We note that in all instances, the largest reported $k$ value corresponds to the smallest uncertainty set for which that instance becomes infeasible, and the corresponding column $t_\text{tot}$ reports the time to prove infeasibility.
Figure~\ref{figure:network} graphically shows the same quantities but averages them across all instances for each $k$.

\begin{table}[!hp]
    \centering
    \caption{Computational comparison of the algorithm in \cite{matthews2019designing} and the proposed method for the network design problem.}
    \label{table:network}
    \begin{tabularx}{\textwidth}{r*{6}{R}r}
        \toprule
        & \multicolumn{3}{c}{Algorithm in \cite{matthews2019designing}} & \multicolumn{3}{c}{Proposed method} & \\
        \cmidrule(r){2-4}\cmidrule(l){5-7}
        $k$ & \# It. & $t_\text{tot}$ (s) & $t^\text{avg}_\text{wc}$ (s) & \# It. & $t_\text{tot}$ (s) & $t^\text{avg}_\text{wc}$ (s) & $t^\text{speedup}_\text{wc}$ \\
        \midrule
        \multicolumn{8}{c}{Instance dfn-bwin}\\
        1 &      10 &     0.4 &    0.03 &      10 &     0.2 &    0.01 &     2.5 \\
        2 &      32 &     3.0 &    0.08 &      33 &     1.3 &    0.03 &     2.8 \\
        3 &      64 &    23.9 &    0.33 &      65 &     5.3 &    0.06 &     5.2 \\
        4 &      73 &    67.7 &    0.86 &      80 &    10.5 &    0.11 &     7.9 \\
        5 &      82 &   124.6 &    1.46 &      86 &    13.6 &    0.13 &    10.8 \\
        6 &      73 &   565.8 &    7.33 &      64 &    12.3 &    0.17 &    42.2 \\
        7 &      67 &   271.3 &    3.96 &      63 &     9.5 &    0.13 &    30.0 \\
        8 &      71 &   270.1 &    3.74 &      69 &     7.2 &    0.09 &    40.9 \\
        9 &      86 &  1462.0 &   16.60 &      88 &    17.7 &    0.18 &    93.4 \\
        10 &     102 &  1873.9 &   18.17 &      87 &    32.7 &    0.35 &    52.2 \\
        11 &     117 &  TL$^*$ &   30.49 &     125 &   100.4 &    0.77 &    39.5 \\
        12 &     106 &  TL$^*$ &   33.63 &      99 &   119.9 &    1.17 &    28.7 \\
        13 &      92 &  TL$^*$ &   38.69 &      85 &   253.6 &    2.90 &    13.3 \\
        14 &      74 &  TL$^*$ &   47.98 &      66 &   118.4 &    1.73 &    27.7 \\
        15 &      83 &  TL$^*$ &   42.84 &      73 &    58.4 &    0.78 &    55.1 \\
        16 &      21 &     0.8 &    0.02 &      13 &     0.4 &    0.01 &     1.6 \\
        \midrule
        \multicolumn{8}{c}{Instance dfn-gwin}\\
        1 &      12 &     1.0 &    0.03 &      12 &     0.8 &    0.01 &     3.1 \\
        2 &      35 &     3.7 &    0.08 &      32 &     1.8 &    0.03 &     2.9 \\
        3 &      47 &     7.8 &    0.14 &      60 &     4.6 &    0.05 &     2.9 \\
        4 &      82 &    62.7 &    0.70 &      77 &     9.6 &    0.09 &     8.1 \\
        5 &      81 &   161.5 &    1.90 &      83 &    12.1 &    0.11 &    17.0 \\
        6 &      83 &   731.7 &    8.39 &      80 &    18.5 &    0.19 &    45.3 \\
        7 &      93 &   826.4 &    8.59 &      90 &    25.7 &    0.24 &    36.2 \\
        8 &     118 &   658.0 &    5.46 &     105 &    12.4 &    0.09 &    60.2 \\
        9 &      98 &   851.6 &    8.50 &     102 &    13.3 &    0.10 &    84.3 \\
        10 &      17 &     1.4 &    0.03 &      16 &     1.1 &    0.01 &     2.4 \\
        \midrule
        \multicolumn{8}{c}{Instance di-yuan}\\
        1 &      14 &     0.5 &    0.03 &      15 &     0.3 &    0.01 &     4.5 \\
        2 &      37 &     4.2 &    0.10 &      42 &     2.1 &    0.03 &     3.6 \\
        3 &      49 &    11.0 &    0.20 &      51 &     3.2 &    0.04 &     5.3 \\
        4 &      72 &    19.5 &    0.25 &      80 &     6.3 &    0.05 &     4.9 \\
        5 &      80 &    38.9 &    0.46 &      86 &     7.2 &    0.05 &     8.5 \\
        6 &     100 &    54.3 &    0.52 &     103 &     6.5 &    0.04 &    13.7 \\
        7 &     137 &    97.4 &    0.68 &     134 &    10.1 &    0.04 &    15.6 \\
        8 &     120 &   231.6 &    1.89 &     138 &    12.3 &    0.05 &    35.2 \\
        9 &     133 &   528.9 &    3.92 &     124 &    14.0 &    0.08 &    51.3 \\
        10 &     129 &  1231.7 &    9.45 &     117 &    22.3 &    0.15 &    63.7 \\
        11 &     119 &  2518.6 &   20.97 &     112 &    43.9 &    0.32 &    66.5 \\
        12 &     119 &    10.6 &    0.06 &     124 &     5.7 &    0.02 &     2.6 \\
        \midrule
        Avg &    74.4$^\dagger$ &   385.3$^\dagger$ &    8.38 &    74.0$^\dagger$ &    10.4$^\dagger$ &    0.27 &    30.6 \\
        \bottomrule
        \multicolumn{8}{l}{\footnotesize$^*$ Reached time limit of 1~hour without providing a feasible solution.} \\
        \multicolumn{8}{l}{\footnotesize$^\dagger$ Averaged across instances where both methods terminated within the time limit.}
    \end{tabularx}
\end{table}

\begin{figure}[!htb]
    \centering
    \begin{subfigure}[b]{0.49\linewidth}
        \includegraphics[height=0.9\linewidth]{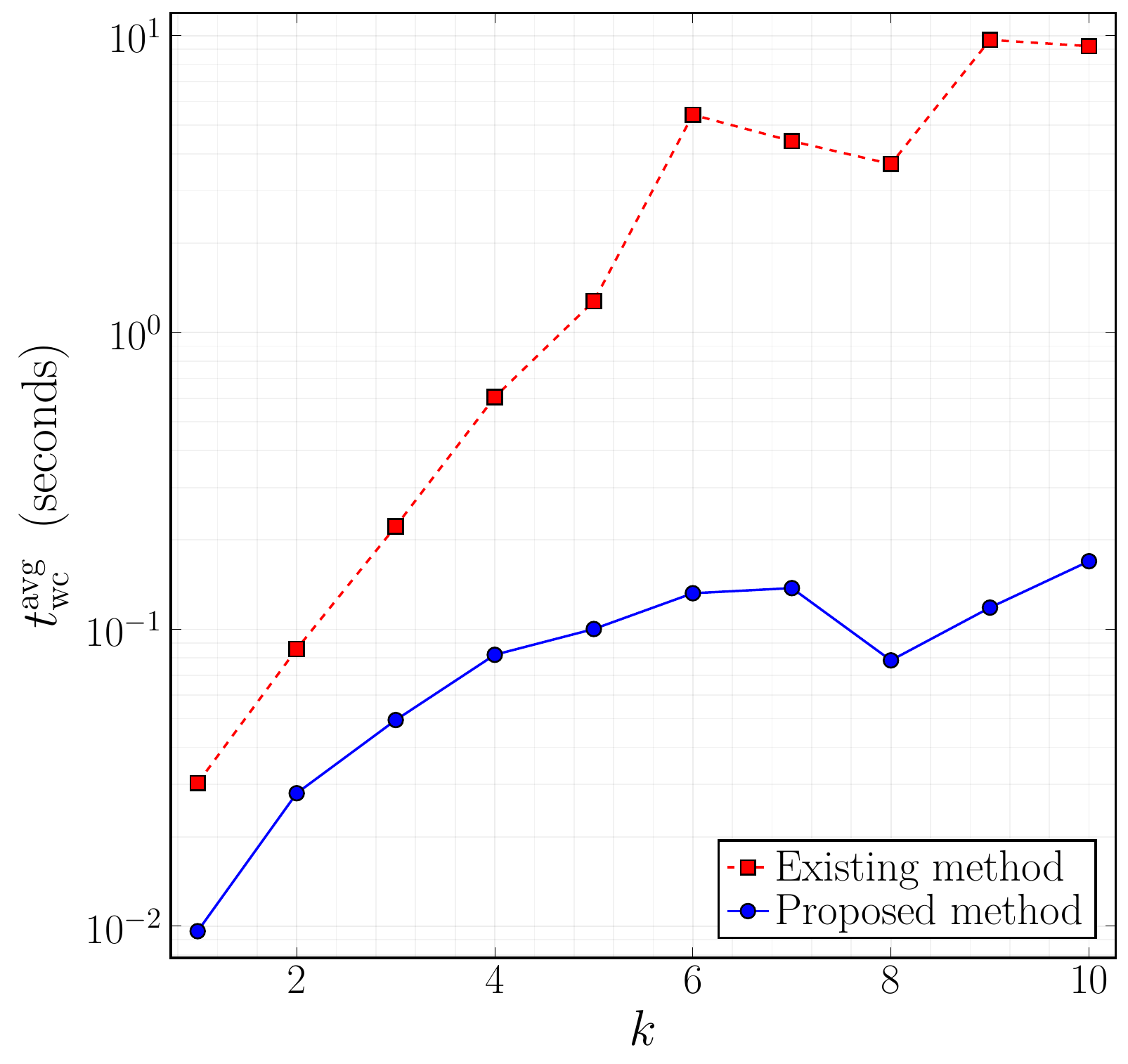}
        \caption{Average time for worst-case realizations}\label{figure:network:twc}
    \end{subfigure}\hfil
    \begin{subfigure}[b]{0.49\linewidth}
        \includegraphics[height=0.9\linewidth]{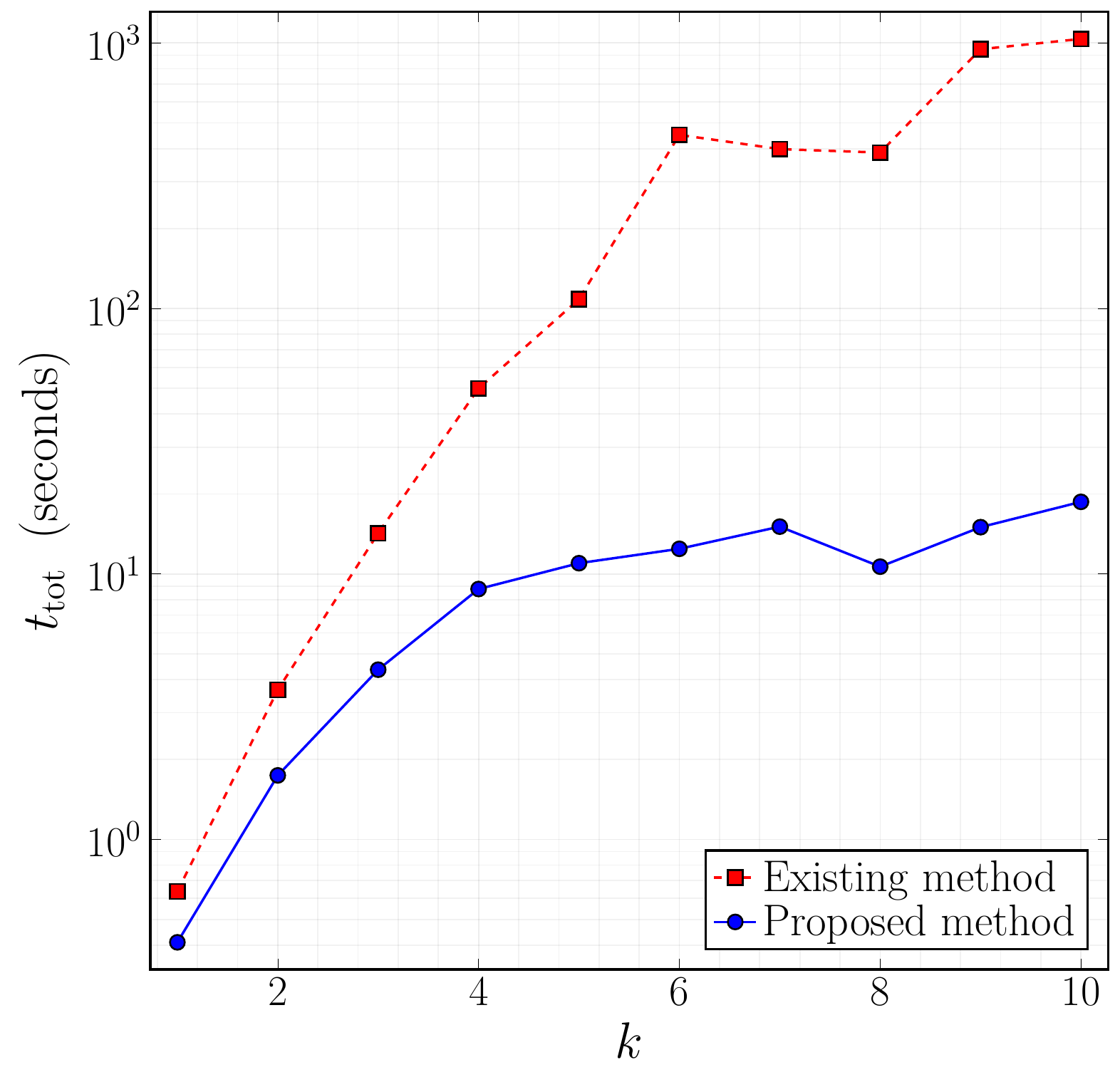}
        \caption{Total run time of algorithm}\label{figure:network:total}
    \end{subfigure}
    \caption{Comparison of computational times of the existing method in \cite{matthews2019designing} against the proposed method for the network design problem.}
    \label{figure:network}
\end{figure}

Table~\ref{table:network} and Figure~\ref{figure:network} show that the proposed method can offer an average speedup of more than a factor of $30$ for computing worst-case parameter realizations.
Since the majority of the total run time is spent for this purpose, this directly translates to a speedup in the total run time of the algorithm as well.
In particular, we observe that the proposed method can also compute optimal solutions (within 2~minutes) for cases where the existing method can not find a feasible solution in the time limit of 1~hour.
Finally, we can observe from Table~\ref{table:network} that that the two methods do not necessarily execute the same number of column-and-constraint generation iterations.
The main reason for this difference is that whenever a candidate first-stage decision $\bm{x}$ is infeasible, the parameter realization that is optimal in the worst-case constraint violation formulation~\eqref{eq:worst_case_problem_indicator_slack_duality} is not necessarily the same as that identified by the method of \cite{matthews2019designing}.
In fact, even when $\bm{x}$ is feasible, the two methods may compute different worst-case parameter realizations if the second-stage problem $\mathcal{Q}_I(\bm{x}, \bm{\xi})$ does not happen to have a unique optimal solution.

\subsection{Facility location with random facility disruptions}
We now consider the facility location problem that was studied in~\cite{cheng2021robust}.
Let $I$ and $J$ denote sets of customers and facility locations, respectively.
Each customer $i \in I$ is associated with some demand $h_i \in \mathbb{R}_{+}$ and a per-unit penalty cost $p_i \in \mathbb{R}_{+}$ for failure to satisfy its demand.
Each location $j \in J$ is associated with a fixed cost $c_j \in \mathbb{R}_{+}$ and capacity $C_j \in \mathbb{R}_{+}$ if a facility is built at that location.
Finally, $d_{ij} \in \mathbb{R}_{+}$ represents the per-unit transportation cost if a facility built at location $j \in J$ satisfies some portion of the demand of customer $i \in I$.
Once built, facilities may experience random disruptions and the uncertain parameter $\xi_j \in \{0, 1\}$ indicates whether facility $j \in J$ has been disrupted.
The goal is to decide (before any disruptions occur) the subset of locations where facilities must be built,
so that the sum of total transportation and penalty costs is minimized in the worst-case disruption scenario.
This problem can be formulated as the following instance of \ref{eq:two_stage_ro_indicator}.
\begin{equation*}%
    \begin{aligned}
        &\inf_{\bm{x} \in \{0, 1\}^{|J|}} \sup_{\bm{\xi} \in \Xi} \, \mathcal{Q}_\mathcal{I}(\bm{x}, \bm{\xi}), \\
        & %
        \mathcal{Q}_\mathcal{I}(\bm{x}, \bm{\xi}) = 
        \left[\begin{aligned}
            \mathop{\text{minimize}}_{\bm{y} \in \mathbb{R}_{+}^{|I| \times |J|}, \bm{u} \in \mathbb{R}^{|I|}_{+}} & \bm{c}^\top \bm{x}  + \bm{d}^\top \bm{y} + \bm{p}^\top \bm{u}  \\
            \text{subject to} \quad &  \sum_{j \in J} y_{ij} + u_i \geq h_i, \;\; i \in I \\
            &  \sum_{i \in I} y_{ij} \leq C_j x_j, \;\; j \in J \\
            & \xi_j = 1 \implies \sum_{i \in I} y_{ij} = 0, \;\; j \in J
        \end{aligned}\right],
    \end{aligned}
\end{equation*}
where the uncertainty set is parameterized by a budget of uncertainty $k \geq 0$, and defined as follows:
\begin{equation*}
    \Xi = \left\{
    \bm{\xi} \in \{0, 1\}^{|J|}: \one^\top \bm{\xi} \leq k
    \right\}.
\end{equation*}
Thus the set models the simultaneous disruption of up to $k$ facilities.
We note that \cite{chen2009uncertain} model the indicator constraints in the second-stage problem as follows:
$\sum_{i \in I} y_{ij} \leq C_j x_j(1 - \xi_j)$ for each $j \in J$;
but we retain the above structure since our method can exploit it.
We refer readers to \cite{cheng2021robust} for additional details about the problem and model formulation.

We conduct experiments on the same set of instances as \cite{cheng2021robust} with the exception that we do not consider uncertainty in customer demands, which that work also addresses.
They develop a column-and-constraint generation method for solving the problem.
Similar to \cite{zhao2012exact}, they propose to use the KKT-based mixed-integer bilinear formulation \eqref{eq:worst_case_problem_kkt} for computing worst-case parameter realizations; the latter is further reformulated by linearizing the bilinear terms using problem-specific big-M coefficients and solved using an MILP solver.
We benchmark (our implementation) of this method against Algorithm~\ref{algo:ccg:indicator}, which uses Algorithm~\ref{algo:wc:indicator} for computing worst-case parameter realizations.
Since the problem satisfies relatively complete recourse by construction, we skip the computation of the constraint violation functions.
In all cases, we set a total time limit of 2~hours.

Table~\ref{table:facility:instance} summarizes and compares the computational performance of the existing algorithm in \cite{cheng2021robust} against our proposed method, averaged across all test instances with uncertainty set parameter $k = 4$.
The columns have the same interpretation as in Table~\ref{table:network}.
Table~\ref{table:facility:budget} summarizes the computational performance across the same test instances as a function of the uncertainty set parameter $k$, and also includes the average optimality gap, which we define as $100\% \times (UB - LB)/UB$.
Finally, Figure~\ref{figure:network} graphically shows the same quantities but averages them as a function of the instance size; that is, the number of customers $|I|$.

\begin{table}[!htb]
    \centering
    \caption{Computational comparison of the algorithm in \cite{cheng2021robust} and the proposed method for the facility location problem ($k = 4$).}
    \label{table:facility:instance}
    \begin{tabularx}{\textwidth}{r*{6}{R}r}
        \toprule
        & \multicolumn{3}{c}{Algorithm in \cite{cheng2021robust}} & \multicolumn{3}{c}{Proposed method} & \\
        \cmidrule(r){2-4}\cmidrule(l){5-7}
        $(|J|, |I|)$ & \# It. & $t_\text{tot}$ (s) & $t^\text{avg}_\text{wc}$ (s) & \# It. & $t_\text{tot}$ (s) & $t^\text{avg}_\text{wc}$ (s) & $t^\text{speedup}_\text{wc}$ \\
        \midrule
        (10,10) &      16 &     8.4 &    0.46 &      16 &     1.4 &    0.01 &    38.9 \\
        (10,15) &      17 &    19.1 &    1.04 &      17 &     2.1 &    0.02 &    46.4 \\
        (10,20) &       7 &    12.2 &    1.71 &       7 &     0.6 &    0.02 &    98.4 \\
        (10,25) &       9 &    20.9 &    2.26 &       9 &     1.0 &    0.02 &   123.6 \\
        (10,30) &       8 &    58.0 &    7.19 &       8 &     0.9 &    0.02 &   360.2 \\
        (10,35) &       7 &    68.5 &    9.75 &       7 &     0.8 &    0.02 &   496.6 \\
        (10,40) &       9 &   110.3 &   12.20 &       9 &     0.9 &    0.02 &   557.2 \\
        (10,45) &       8 &   157.1 &   19.58 &       8 &     1.1 &    0.03 &   705.2 \\
        (10,49) &       8 &   146.9 &   18.28 &       8 &     1.4 &    0.03 &   586.3 \\
        \midrule
        (15,15) &      27 &    79.1 &    2.39 &      27 &    15.0 &    0.03 &    83.4 \\
        (15,20) &      23 &   104.0 &    4.06 &      23 &    11.9 &    0.03 &   152.0 \\
        (15,25) &      41 &   253.7 &    4.27 &      41 &    84.4 &    0.04 &   104.8 \\
        (15,30) &      20 &   168.8 &    7.93 &      20 &    11.1 &    0.03 &   235.2 \\
        (15,35) &      40 &   541.4 &   11.04 &      40 &   100.2 &    0.05 &   244.6 \\
        (15,40) &      46 &   844.8 &   15.14 &      46 &   157.9 &    0.06 &   253.8 \\
        (15,45) &      32 &   608.8 &   16.55 &      32 &    81.2 &    0.06 &   296.9 \\
        (15,49) &      55 &  1341.6 &   19.63 &      55 &   268.4 &    0.07 &   282.0 \\
        \midrule
        (20,20) &     130 &  5510.2 &    4.33 &     130 &  4624.7 &    0.04 &    98.3 \\
        (20,25) &      98 &  4023.5 &   13.44 &      98 &  2628.2 &    0.06 &   216.4 \\
        (20,30) &     108 &  6090.0 &   11.26 &     108 &  4963.4 &    0.06 &   198.7 \\
        (20,35) &     105 &      TL &   14.45 &     108 &  6030.6 &    0.07 &   219.8 \\
        (20,40) &     100 &      TL &   17.94 &     100 &  5454.4 &    0.07 &   272.5 \\
        (20,45) &      93 &  6531.9 &   17.52 &      93 &  5216.0 &    0.07 &   238.8 \\
        (20,49) &      76 &      TL &   24.57 &      84 &      TL &    0.08 &   323.8 \\
        \midrule
        (25,25) &      91 &      TL &    8.63 &      95 &      TL &    0.05 &   170.2 \\
        (25,30) &      84 &      TL &   12.46 &      91 &      TL &    0.05 &   237.4 \\
        (25,35) &      81 &      TL &   11.98 &      86 &      TL &    0.05 &   219.2 \\
        (25,40) &      77 &      TL &   18.63 &      85 &      TL &    0.06 &   310.2 \\
        (25,45) &      69 &      TL &   27.52 &      76 &      TL &    0.07 &   385.2 \\
        (25,49) &      68 &      TL &   31.06 &      77 &      TL &    0.07 &   422.4 \\
        \midrule
        (30,30) &      59 &      TL &   14.24 &      61 &      TL &    0.05 &   267.5 \\
        (30,35) &      50 &      TL &   17.40 &      53 &      TL &    0.06 &   306.7 \\
        (30,40) &      45 &      TL &   18.43 &      46 &      TL &    0.06 &   290.6 \\
        (30,45) &      43 &      TL &   25.60 &      46 &      TL &    0.06 &   446.8 \\
        (30,49) &      40 &      TL &   25.95 &      43 &      TL &    0.06 &   424.9 \\
        \midrule
        Avg     &    38.2$^\dagger$ &  1271.4$^\dagger$ &   13.40 &    38.2$^\dagger$ &   865.4$^\dagger$ &    0.05 &   284.4 \\
        \bottomrule
        \multicolumn{8}{l}{\footnotesize$^\dagger$ Averaged across instances where both methods terminated within the time limit.}
    \end{tabularx}
\end{table}

\begin{table}[!htb]
    \centering
    \caption{Computational comparison of the algorithm in \cite{cheng2021robust} and the proposed method for the facility location problem.}
    \label{table:facility:budget}
    \begin{tabularx}{\textwidth}{rr*{3}{R}r*{3}{R}r}
        \toprule
        & \multicolumn{4}{c}{Algorithm in \cite{cheng2021robust}} & \multicolumn{4}{c}{Proposed method} & \\
        \cmidrule(r){2-5}\cmidrule(l){6-9}
        $k$ & Opt & Gap (\%) & $t_\text{tot}^\dagger$ (s) & $t^\text{avg}_\text{wc}$ (s) & Opt & Gap (\%) & $t_\text{tot}^\dagger$ (s) & $t^\text{avg}_\text{wc}$ (s) & $t^\text{speedup}_\text{wc}$ \\
        \midrule
        1 &  35/35  &    0.00 &    15.7 &    1.33 &  35/35  &    0.00 &     6.3 &    0.01 &   114.6 \\
        2 &  35/35  &    0.00 &   296.9 &    4.55 &  35/35  &    0.00 &   202.6 &    0.02 &   207.3 \\
        3 &  28/35  &    1.08 &  1073.9 &    9.33 &  29/35  &    0.87 &   816.8 &    0.04 &   244.2 \\
        4 &  21/35  &    5.47 &  1271.4 &   13.40 &  23/35  &    5.19 &   865.4 &    0.05 &   284.4 \\
        \midrule
          & 119/140  &    1.64 &   569.0 &    7.15 & 122/140  &    1.51 &   406.4 &    0.03 &   240.6 \\
        \bottomrule
        \multicolumn{10}{l}{\footnotesize$^\dagger$ Averaged across instances where both methods terminated within the time limit.}
    \end{tabularx}
\end{table}

\begin{figure}[!htb]
    \centering
    \begin{subfigure}[b]{0.49\linewidth}
        \includegraphics[height=0.9\linewidth]{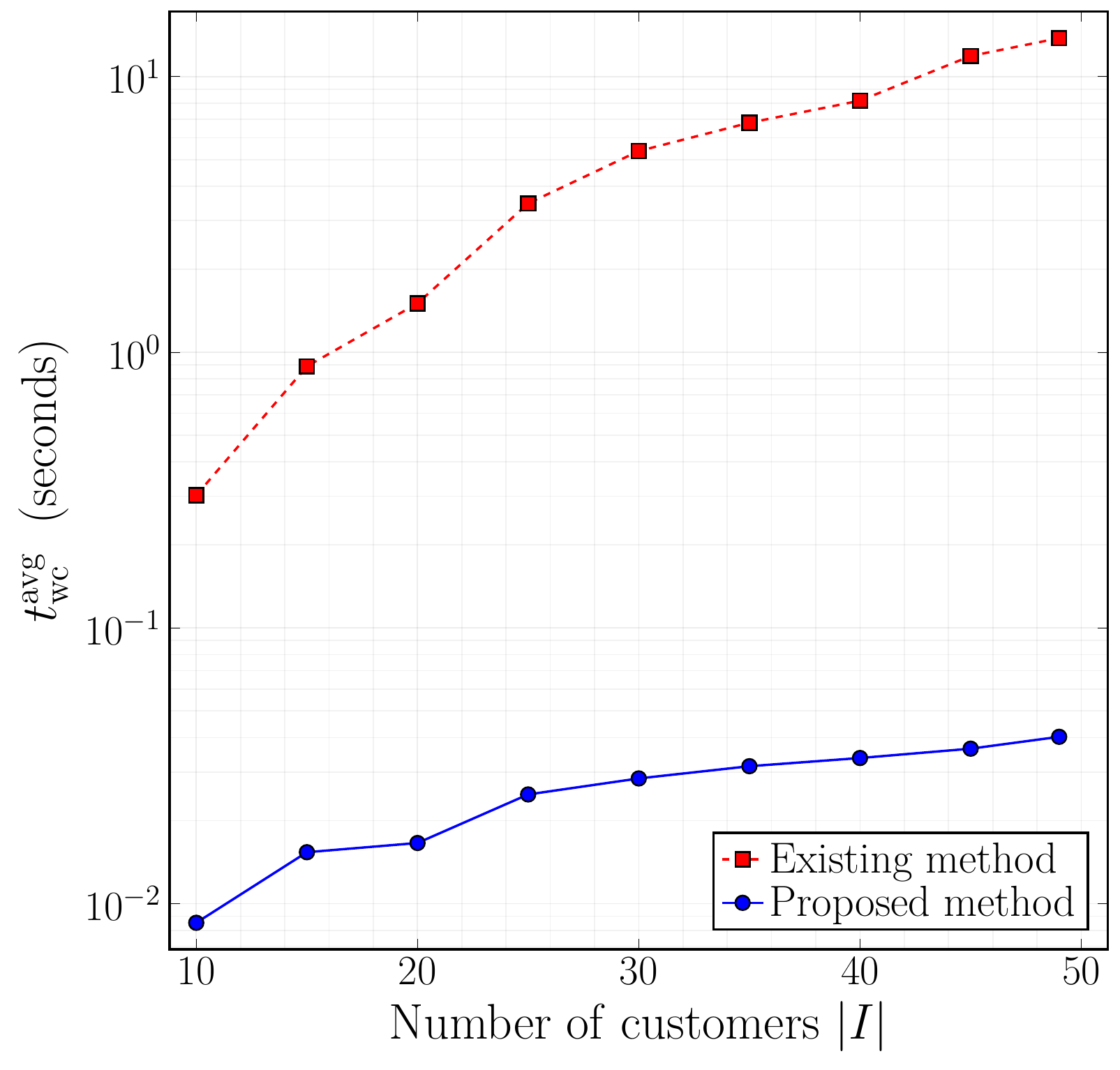}
        \caption{Average time for worst-case realizations}\label{figure:facility:twc}
    \end{subfigure}\hfil
    \begin{subfigure}[b]{0.49\linewidth}
        \includegraphics[height=0.9\linewidth]{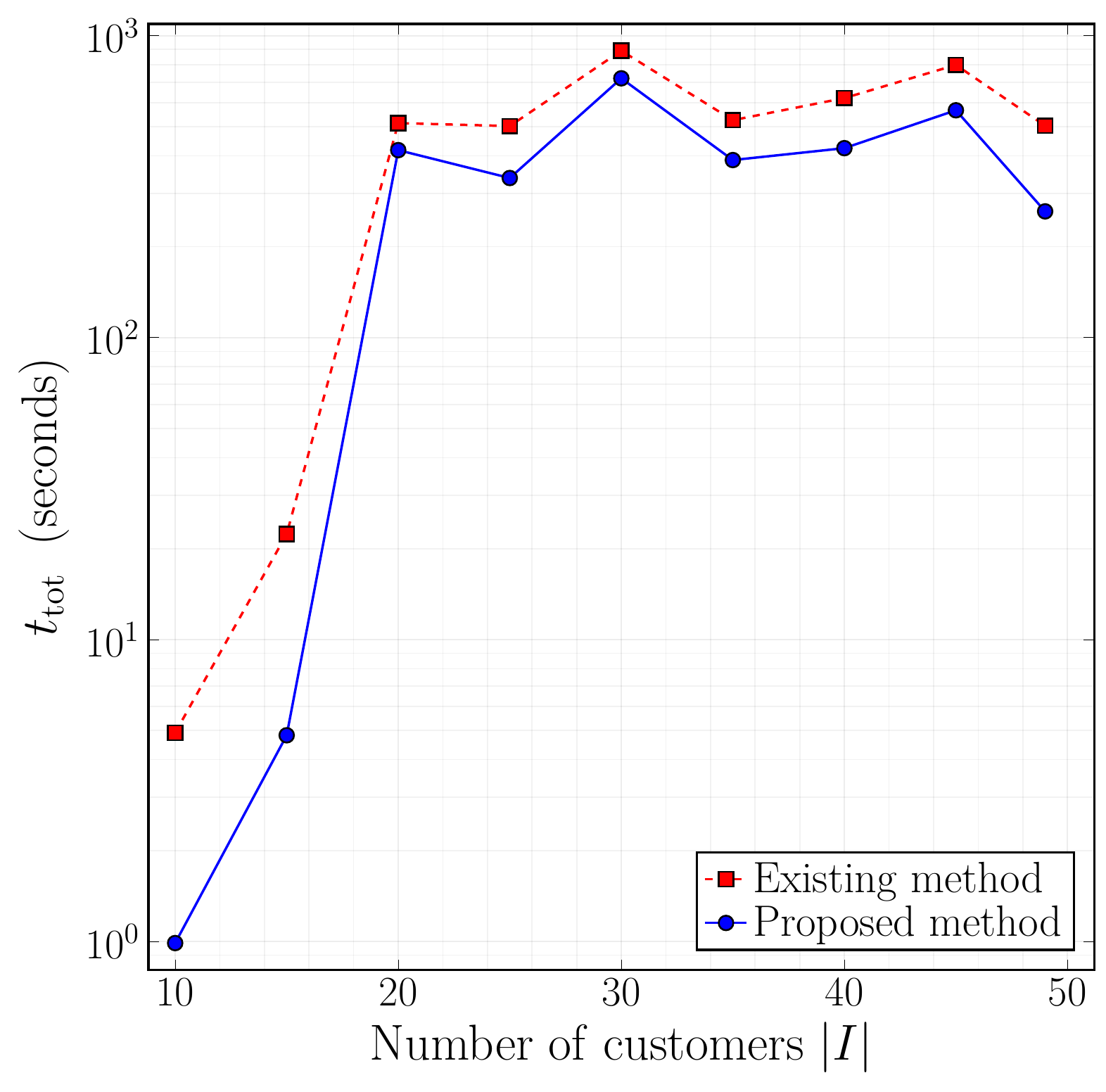}
        \caption{Total run time of algorithm}\label{figure:facility:total}
    \end{subfigure}
    \caption{Comparison of computational times of the existing method in \cite{cheng2021robust} against the proposed method for the facility location problem.}
    \label{figure:facility}
\end{figure}

Tables~\ref{table:facility:instance} and~\ref{table:facility:budget}, and Figure~\ref{figure:facility:twc} show that the proposed method can offer an average speedup of more than a factor of $240$ for computing worst-case parameter realizations.
However, unlike the network design application, the majority of the total run time is spent in solving the lower-bounding problems \eqref{eq:ccg_lb_update} and \eqref{eq:ccg_indicator_lb_update}; therefore, the speedup in computing worst-case parameter realizations translates only partially to a speedup in the total run time of the algorithm.
This is particularly evident for larger instances as shown in Figure~\ref{figure:facility:total}.
Nevertheless, the decrease in total run time is an encouraging sign.
Interestingly, and in contrast to the network design application, Table~\ref{table:facility:instance} shows that whenever both methods terminate in the time limit, they execute the same number of column-and-constraint generation iterations.
This is likely because the second-stage problem $\mathcal{Q}_I(\bm{x}, \bm{\xi})$ always has a unique optimal solution; indeed, in this case, the KKT-based \eqref{eq:worst_case_problem_kkt} and Lagrangian duality-based \eqref{eq:worst_case_problem_indicator_lagrangian_duality} formulations (the latter corresponding to the optimal Lagrange multiplier) are guaranteed to identify the same worst-case parameter realization in every iteration of the algorithm.

Finally, we also compare the computational performance of the Benders decomposition scheme described in Algorithm~\ref{algo:benders:indicator} with the column-and-generation method.
Similar to the latter, the former uses Algorithm~\ref{algo:wc:indicator} for computing worst-case parameter realizations.
Therefore, any differences in their performance can be attributed primarily to differences in their lower-bounding master problems.

Table~\ref{table:facility:benders} summarizes and compares the two methods across the same set of test instances as those reported in Table~\ref{table:facility:budget}.
We observe that for small values of the uncertainty budget parameter $k \in \{1, 2, 3\}$, the column-and-constraint generation algorithm outperforms the Benders decomposition scheme.
However, for $k = 4$, the latter slightly outperforms the former, featuring smaller solution times and optimality gaps.
This difference in performance is because the number of worst-case parameter realizations grows exponentially with $k$.
Each of these adds additional binary variables to the column-and-constraint generation master problem, making its solution significantly more difficult with each iteration.
In contrast, the Benders decomposition scheme adds only a single constraint to its master problem in every iteration.
Although this leads to weaker lower bounds and a larger number of iterations in general (as is also evidenced from the table), this trade-off appears to favor the Benders decomposition scheme for large values of $k$.

\begin{table}[!htb]
    \centering
    \caption{Computational comparison of the Benders decomposition and column-and-constraint generation algorithms for the facility location problem.}
    \label{table:facility:benders}
    \begin{tabularx}{\textwidth}{rr*{3}{R}r*{3}{R}}
        \toprule
        & \multicolumn{4}{c}{Benders decomposition} & \multicolumn{4}{c}{Column-and-constraint generation} \\
        \cmidrule(r){2-5}\cmidrule(l){6-9}
        $k$ & Opt & \# It.$^\dagger$ & Gap (\%) & $t_\text{tot}^\dagger$ (s) & Opt & \# It.$^\dagger$ & Gap (\%) & $t_\text{tot}^\dagger$ (s) \\
        \midrule
        1 &  27/35  &   356.9 &    1.80 &   474.7 &  35/35  &     5.7 &    0.00 &     1.6\\
        2 &  28/35  &   517.2 &    1.98 &   671.2 &  35/35  &    13.0 &    0.00 &    22.6\\
        3 &  24/35  &   362.0 &    2.23 &   223.7 &  29/35  &    25.8 &    0.87 &   222.9\\
        4 &  24/35  &   507.8 &    3.57 &   556.5 &  23/35  &    43.9 &    5.19 &  1289.5\\
        \midrule
          & 103/140 &   436.8 &    2.39 &   488.7 & 122/140 &    21.0 &    1.51 &   349.8\\
        \bottomrule
        \multicolumn{9}{l}{\footnotesize$^\dagger$ Averaged across instances where both methods terminated within the time limit.}
    \end{tabularx}
\end{table}

\subsection{Staff rostering under demand uncertainty}
We finally examine the staff rostering problem that was studied in~\cite{zeng2013solving}.
Consider a service organization that has to assign its regular staff members to shifts to meet unknown service demands, but may also occasionally have to hire part-time staff to meet random demand surges.
Let $I$ and $J$ denote the number of regular and part-time staff, respectively;
and let $T$ and $N$ denote the number of shifts (or time periods) in the scheduling horizon and number of work-hours per shift per staff member, respectively.
Each shift $t \in [T]$ is associated with an uncertain demand $d_t \in [d_t^0, d_t^0 + \bar{d}_t]$, where $d_t^0, \bar{d}_t \in \mathbb{R}_{+}$ are the forecasts of the nominal demand and its deviation; and a penalty cost $M_t \in \mathbb{R}_{+}$ per unit of unmet demand.
The set of possible demand realizations is modeled using a cardinality-constrained uncertainty set with parameter $k \geq 0$:
\begin{equation*}
    \mathcal{D} = \left\{
    \bm{d} \in \mathbb{R}_{+}^T: \exists \bm{\xi} \in \Xi: \bm{d} = \bm{d}^0 + \bar{\bm{d}} \circ \bm{\xi}
    \right\}, \text{ and }
    \Xi = \left\{
    \bm{\xi} \in \{0, 1\}^T : \one^\top \bm{\xi} \leq k
    \right\}.
\end{equation*}
Furthermore, each regular staff member $i \in [I]$ has a fixed wage cost $c_{it} \in \mathbb{R}_{+}$, and a minimum $l_i \in \mathbb{R}_{+}$ and maximum $u_i \in \mathbb{R}_{+}$ number of shifts that they can work over the planning horizon.
Similarly, each part-time staff member $j \in [J]$ has a fixed and hourly wage cost of $f_{jt} \in \mathbb{R}_{+}$ and $h_{jt} \in \mathbb{R}_{+}$, respectively, and minimum $a_j \in \mathbb{R}_{+}$ and maximum $b_j \in \mathbb{R}_{+}$ number of shifts they can work over the planning horizon.
The goal is to allocate (before observing any demand) the allocation of regular staff members to shifts,
so that the total costs of hiring any additional part-time staff members and penalty costs for unserved demands is minimized under the worst-case demand realization.
This problem can be formulated as the following instance of \ref{eq:two_stage_ro_general}.
\begin{equation*}
    \begin{aligned}
        &\inf_{\bm{x} \in \mathcal{X}} \sup_{\bm{\xi} \in \Xi} \, \mathcal{Q}(\bm{x}, \bm{\xi}), \\
        & %
        \mathcal{Q}(\bm{x}, \bm{\xi}) = 
        \left[\begin{aligned}
            \mathop{\text{minimize}}_{(\bm{y}, \bm{v}) \in \mathcal{Y}, \bm{w} \in \mathbb{R}_{+}^T} & \bm{c}^\top \bm{x}  + \bm{f}^\top \bm{y} + \bm{d}^\top \bm{u} + \bm{M}^\top \bm{w}  \\
            \text{subject to} \;\;\; &  N \sum_{i \in [I]} x_{it} + \sum_{j \in [J]} v_{jt} + w_t \geq d_t^0 + \bar{d}_t \xi_t, \;\; t \in [T]
        \end{aligned}\right],
    \end{aligned}
\end{equation*}
where the feasible sets of first- and second-stage decisions are as follows:
\begin{gather*}
    \mathcal{X} = \left\{
    \bm{x} \in \{0, 1\}^{I \times T}: \begin{aligned}
        & x_{it} + x_{i,t+1} + x_{i,t+2} \leq 2, \;\; i \in [I], \; t\in [T-2] \\
        & l_i \leq \sum_{t \in [T]} x_{it} \leq u_i, \;\; i \in [I]
    \end{aligned}
    \right\}, \\
    \mathcal{Y} = \left\{
    (\bm{y}, \bm{v}) \in \{0, 1\}^{J \times T} \times \mathbb{R}_{+}^{J \times T}: \begin{aligned}
        & y_{jt} + y_{j,t+1} \leq 1, \;\; j \in [J], \; t\in [T-1] \\
        & a_j \leq \sum_{t \in [T]} y_{jt} \leq b_j, \;\; j \in [J] \\
        & v_{jt} \leq N y_{jt}, \;\; j \in [J], \; t \in [T]
    \end{aligned}
    \right\}.
\end{gather*}
The first constraints in $\mathcal{X}$ and $\mathcal{Y}$ model the fact that regular and part-time staff members cannot work more than 3 and 2 consecutive shifts, respectively.
Note that the second-stage decision $y_{jt}$ is binary and it indicates whether part-time staff member $j \in [J]$ is hired in a particular shift $t \in [T]$ whereas the second-stage decision $v_{jt}$ is continuous and it record the number of hours part-time staff member $j$ is hired to work for in shift $t$.
Therefore, this is an example of a problem featuring mixed-integer second-stage recourse decisions.
We refer readers to \cite{zeng2013solving} for additional details about the model formulation.

We use the same experimental setup as \cite{zeng2013solving} and randomly generate 10 instances, each of size $(I, J, T) = (12, 3, 21)$.
However, since these are solved relatively fast, we also consider larger instances of twice the size: $(I, J, T) = (24, 6, 42)$.
In doing so, we also scale the parameters $\bm{l}, \bm{u}$, $\bm{a}, \bm{b}$, $\bm{d}^0$, and $\bar{\bm{d}}$ by a factor of two, but keep all cost parameters unchanged.
In \cite{zeng2013solving}, the problem is solved using a two-level column-and-constraint generation method, where the upper bounding problem of the inner-level algorithm (see line~\ref{algo:ccg:inner:optimality:ub-update} of Algorithm~\ref{algo:ccg:inner}) is solved using a KKT-based mixed-integer bilinear formulation \eqref{eq:worst_case_problem_kkt} of the second-stage problem (for fixed values of $\bm{y}$).
The latter is then further reformulated by linearizing the bilinear terms using problem-specific big-M coefficients and solved using an MILP solver.
We benchmark (our implementation) of this method, where the inner-level column-and-constraint generation is solved using Algorithm~\ref{algo:ccg:inner} instead.
Since the problem satisfies relatively complete recourse by construction, we skip the computation of the worst-case constraint violations.
Also, since the second set of conditions in Theorem~\ref{theorem:closed_form_lu} are satisfied,
we directly compute the optimal Lagrange multiplier using Theorem~\ref{theorem:optimal_multiplier_general}.
In all cases, we set a total time limit of 2~hours.

\begin{table}[!htb]
    \centering
    \caption{Computational comparison of the algorithm in \cite{zeng2013solving} and the proposed method for the staff rostering problem.}
    \label{table:rostering}
    \begin{tabularx}{\textwidth}{cr*{3}{R}r*{3}{R}R}
        \toprule
        & \multicolumn{4}{c}{Algorithm in \cite{zeng2013solving}} & \multicolumn{4}{c}{Proposed method} & \\
        \cmidrule(r){2-5}\cmidrule(l){6-9}
        $(T, k)$ & Opt & \# It. & Gap (\%) & $t_\text{tot}^\dagger$ (s) & Opt & \# It. & Gap (\%) & $t_\text{tot}^\dagger$ (s) & $t^{\text{speedup},\dagger}_\text{tot}$ \\
        \midrule
        (12, 3) & 10/10  &    71.3 &    0.00 &     9.0 & 10/10  &    71.8 &    0.00 &     9.3 &     1.0 \\
        (12, 6) & 10/10  &   145.4 &    0.00 &    20.5 & 10/10  &   147.4 &    0.00 &    23.4 &     0.9 \\
        (12, 9) & 10/10  &   119.9 &    0.00 &   548.3 & 10/10  &   117.5 &    0.00 &   300.1 &     1.8 \\
        \midrule
                & 30/30  &   112.2 &    0.00 &   192.6 & 30/30  &   112.2 &    0.00 &   111.0 &     1.7 \\
        \midrule
        (24, 3) &  4/10  &   679.5 &    0.62 &   808.8 &  5/10  &   679.8 &    0.51 &   512.6 &     1.6 \\
        (24, 6) &  1/10  &   556.0 &    2.17 &   436.7 &  2/10  &   506.0 &    2.15 &   183.3 &     2.4 \\
        (24, 9) &  1/10  &   780.0 &    2.61 &  1125.5 &  1/10  &   771.0 &    2.69 &   488.6 &     2.3 \\
        \midrule
                &  6/30  &   675.7 &    1.80 &   799.5 &  8/30  &   666.0 &    1.78 &   453.8 &     1.8 \\
        \bottomrule
        \multicolumn{10}{l}{\footnotesize$^\dagger$ Averaged across instances where both methods terminated within the time limit.}
    \end{tabularx}
\end{table}

Table~\ref{table:rostering} summarizes and compares the computational performance of the algorithm in \cite{zeng2013solving} against our proposed method.
The columns \# It. and Gap (\%) report, respectively, the total number of iterations in the inner-level column-and-constraint generation algorithm (summed across all outer-level iterations) and the optimality gap defined as $100\% \times (UB - LB)/UB$.
Thus, the former quantity is equal to the number of times that the upper bounding problem in line~\ref{algo:ccg:inner:optimality:ub-update} of Algorithm~\ref{algo:ccg:inner} is solved.
We observe that although the larger class of instances with $T = 24$ shifts is difficult to solve using either method, our proposed algorithm is able to solve more instances and with a smaller optimality gap, with an average speedup (in total run time) of roughly~1.75.
Finally, although not shown in this table, the number of outer-level iterations for $T=24$ never exceeded 10 for either method and for any value of $k$.
This is because the solution of these instances is limited more by the lower bounding problem \eqref{eq:ccg_lb_update} in the column-and-constraint generation algorithm, than by the computation of worst-case parameter realizations.

\section{Conclusions}\label{sec:conclusions}

This paper presented new Lagrangian dual formulations for two-stage robust optimization problems with binary-valued uncertain data.
Their crucial features are the use of apropriately defined penalty functions that move all uncertainties from the constraints to the objective function using a finite scalar-valued Lagrange multiplier.
The proposed formulations satisfy strong duality properties even in the presence of discrete first- and second-stage decisions, uncertain objective functions, and in the absence of relatively complete recourse.
We showed how they can be used to efficiently calculate worst-case parameter realizations that can be readily integrated in existing high-level exact algorithms.
Notably, they circumvent many of the numerical issues that plague existing methods, including smaller MILP formulations and a lack of reliance on decision-independent bounds on dual variables.
The proposed methods can exploit problem structure in applications where the binary parameters switch on or off constraints, and they can potentially enable existing algorithms that address uncertainty only in the objective function to solve also problems with uncertainty-affected constraints.
Numerical experiments on a diverse set of network design, facility location and staff rostering test instances showed that they can provide computational speedups over existing methods that have been tailored for those specific applications.

We believe that this work can be extended along several directions.
First, we need to build extensions of the proposed Lagrangian dual formulations and associated exact methods for problems featuring random recourse or mixed-integer uncertainties
(including single-stage problems with mixed-integer decisions).
Looking beyond exact algorithms, it would also be interesting to assess whether the proposed ideas can be useful in other contexts as well, including but not limited to approximations schemes for multi-stage robust optimization problems, and worst-case uncertainty quantification for nonlinear models.

\bibliographystyle{plain}      %
\bibliography{bibliography}   %

\noindent\fbox{\parbox{0.95\textwidth}{
    The submitted manuscript has been created by UChicago Argonne, LLC, Operator of Argonne National Laboratory (``Argonne''). Argonne, a U.S. Department of Energy Office of Science laboratory, is operated under Contract No. DE-AC02-06CH11357. The U.S. Government retains for itself, and others acting on its behalf, a paid-up nonexclusive, irrevocable worldwide license in said article to reproduce, prepare derivative works, distribute copies to the public, and perform publicly and display publicly, by or on behalf of the Government. The Department of Energy will provide public access to these results of federally sponsored research in accordance with the DOE Public Access Plan (http://energy.gov/downloads/doe-public-access-plan).}
}

\end{document}